\newtheorem{thm}{Theorem}[section]
\newtheorem{lemma}{Lemma}[section]
\newtheorem{prop}{Proposition}[section]
\newtheorem{claim}{Claim}[section]
\newtheorem{defin}{Definition}[section]
\newtheorem{oss}{Remark}[section]
\newtheorem{es}{Example}[section]
\newcommand{\numberset}{\mathbb}
\newcommand{\N}{\numberset{N}}
\newcommand{\R}{\numberset{R}}
\newcommand{\nl}[2]{\|#1\|_{L^2(#2)}} 
\newcommand{\nh}[2]{\|#1\|_{H^1(#2)}} 
\newcommand{\h}[1]{\widehat{#1}}
\newcommand{\pt}{\partial_t}
\newcommand{\px}{\partial_x}
\newcommand{\pxx}{\partial^2_x}
\newcommand{\eps}{\varepsilon}
\title{Effect of a membrane on diffusion-driven Turing instability}
\let\@fnsymbol\@arabic
\author{Giorgia Ciavolella\thanks{Sorbonne Universit{\'e}, Inria, Universit\'{e} de Paris, Laboratoire Jacques-Louis Lions, UMR7598, 75005 Paris, France. Emails: giorgia.ciavolella@sorbonne-universite.fr} \thanks{Dipartimento di Matematica, Università degli Studi di Roma ”Tor Vergata”, Rome, Italy}
}
\date{\today}
\begin{document} 
	
	\maketitle
	
	\begin{center}
		{\small This document is a detailed version of the published paper~\cite{ciavolella} and it represents a chapter of the PhD thesis {\it ”Evolution equations with membrane conditions”} (in preparation).}
	\end{center}

	\begin{abstract} 
	Biological, physical, medical, and numerical applications involving membrane problems on different scales are numerous. We propose an extension of the standard Turing theory to the case of two domains separated by a permeable membrane. To this aim, we study a reaction–diffusion system with zero-flux boundary conditions on the external boundary and Kedem-Katchalsky membrane conditions on the inner membrane. We use the same approach as in the classical Turing analysis but applied to membrane operators. The introduction of a diagonalization theory for compact and self-adjoint membrane operators is needed. Here, Turing instability is proven with the addition of new constraints, due to the presence of membrane permeability coefficients. 
	We perform an explicit one-dimensional analysis of the eigenvalue problem, combined with numerical simulations, to validate the theoretical results. 
	Finally, we observe the formation of discontinuous patterns in a system which combines diffusion and dissipative membrane conditions, varying both diffusion and membrane permeability coefficients.
	The case of a fast reaction-diffusion system is also considered.
	\end{abstract} 
	\vskip .7cm
	
	\noindent{\makebox[1in]\hrulefill}\newline
	2010 \textit{Mathematics Subject Classification.} 35B36, 35K57, 35Q92, 65M06, 65M22 
	\newline\textit{Keywords and phrases.} Kedem-Katchalsky conditions; Turing instability; Reaction-diffusion equations; Finite difference methods; Mathematical biology

\section{Introduction}
Pattern formation in a system of reacting substances that possess the ability to diffuse was postulated in $1952$ by Alan Turing~\cite{Turing} and it was numerically studied in $1972$ by Gierer and Meinhardt~\cite{gierer}. A huge literature followed this path in describing animal pigmentation as for the well-studied zebrafish (Watanabe and Kondo~\cite{kondo1}, Yamaguchi \textit{et al.}~\cite{kondo2}), the arrangement of hair and feather in Painter \textit{et al.}~\cite{painter}, the mammalian palate in Economou \textit{et al.}~\cite{kondo4}, teeth in Cho \textit{et al.}~\cite{teeth}, tracheal cartilage rings in Sala \textit{et al.}~\cite{tracheal} and digit pattering in Raspopovic \textit{et al.}~\cite{digit}. In particular, there were found evidences asserting that internal anatomy does not play an influential role in this phenomenon. So, spatial patterns develop autonomously without any pre-pattern structure and they are mathematically described by Turing mechanism. Reaction-diffusion equations are not the only kind of system that exhibits the formation of patterns. Receptor-based models, Klika \textit{et al.}~\cite{klika}, Marciniak-Czochra \textit{et al.}~\cite{marciniak} are an example of organisation mechanisms in a system coupling reaction-diffusion equations and ordinary differential equations. These models are based on the idea that cell differentiate according to positional information. This pre-pattern or morphogen mechanism has been experimentally proven in many morphogenetic events in early development, whereas it is not applicable to the complex structure of the adult body, Kondo \textit{et al.}~\cite{kondo3}. 

Here, we consider another kind of situation which is always a reaction-diffusion system but with a membrane as introduced by Kedem-Katchalsky. In the last twenty years, biological applications of membrane problems have increased. Furthermore, they can describe phenomena on several different scales: from the nucleus membrane, penetrated by molecules such as proteins in the transport between cytoplasm and nucleus (Cangiani and Natalini~\cite{cangiani}, Dimitrio~\cite{dimitrio}, Serafini~\cite{serafini}), to thin interfaces, called basal membranes, degraded by cancer cells with the help of enzymes (Chaplain \textit{et al.}~\cite{giverso}, Ciavolella \textit{et al.}~\cite{ciadapou}, Gallinato \textit{et al.}~\cite{gallinato}, Giverso \textit{et al.}~\cite{giversomulti}), and to exchanges in bloody vessels of blood solutes, such as oxygen, numerically studied in Quarteroni \textit{et al.}~\cite{quarteroni}. Also semi-discretization of mass diffusion problems requires numerical treatment in adjoint domains coupled at the interface (see Calabrò~\cite{calabro}). 

In Ciavolella and Perthame~\cite{ciavper}, the reader can find a previous analytical study on a reaction-diffusion system of $m\geq 2$ species with membrane conditions of the Kedem-Katchalsky type. The main result concerns the existence of a global weak solution in the case of low regularity initial data and at most quadratic non-linearities in an $L^1$-setting. Moreover, it is proven a regularity result such that we have space and time $L^2$ solutions. In particular, solutions are $L^\beta$ in time and $W^{1, \beta}$ in space with $\beta\in [1,2)$, except on the membrane $\Gamma $ where we loose the derivatives regularity. 
So, now the question that arises is whether it is possible to observe patterns in the case species react and diffuse in a domain with an inner membrane and under which conditions.
 
For our purpose, we consider the domain $\Omega = \Omega_l \cup \Omega_r$ with internal interface $\Gamma$ and boundary $\partial \Omega= \Gamma_l \cup \Gamma_r$, where $\Gamma_l:=\partial \Omega_l \setminus \Gamma$, $\Gamma_r:=\partial \Omega_r \setminus \Gamma$. We denote as $\boldsymbol{n}_l$ (respectively, $\boldsymbol{n}_r$) the outward normal to $\Omega_l$ (respectively, $\Omega_r$). We call $\boldsymbol{n}:=\boldsymbol{n}_l=-\boldsymbol{n}_r$.  On the two domains $Q_T^l:=(0,T)\times\Omega_l$ and $Q_T^r:=(0,T)\times\Omega_r$, we consider a reaction-diffusion membrane problem for two species $u$ and $v$ as below. 
\begin{equation}\label{eq}
\left\{
\begin{array}{lll}
\pt u_l-D_{ul}\,\Delta u_l=f(u_l,v_l),\\
& \mbox{in}\; Q_T^l,\\
\pt v_l -D_{v l}\,\Delta v_l=g(u_l,v_l),\\[1ex]
\nabla u_l \cdot n \,=\, 0 \,=\, \nabla v_l\cdot n, & \mbox{in}\; \Sigma_T^l,\\[1ex]
D_{ul}\,\nabla u_l\cdot n=k_u(u_r-u_l),\\
& \mbox{in}\; \Sigma_{T,\Gamma},\\
D_{v l}\,\nabla v_l\cdot n=k_v(v_r-v_l),
\end{array}
\right.\qquad
\left\{
\begin{array}{lll}
\pt u_r-D_{ur}\,\Delta u_r=f(u_r,v_r),\\
& \mbox{in}\; Q_T^r,\\
\pt v_r -D_{v r}\,\Delta v_r=g(u_r,v_r),\\[1ex]
\nabla u_r\cdot n \,=\, 0 \,=\, \nabla v_r\cdot n, & \mbox{in}\; \Sigma_T^r,\\[1ex]
D_{ur}\,\nabla u_r \cdot n=k_u(u_r-u_l),\\
& \mbox{in}\; \Sigma_{T,\Gamma},\\
D_{v r}\,\nabla v_r\cdot n=k_v(v_r-v_l).
\end{array}
\right.
\end{equation}
with $\Sigma_T^l:= (0,T)\times\Gamma_l$, $\;\Sigma_T^r:=(0,T)\times\Gamma_r$ and $\Sigma_{T,\Gamma}:=(0,T)\times\Gamma$.\\

In this chapter, we are interested in the effect of the membrane, represented by the permeability coefficients $k_u, k_v$, for Turing instability to arise under particular conditions on the latter membrane coefficients and on the diffusion ones. With this aim, we extend Turing's theory to the case of membrane operators. 
We recall the definition of a Turing unstable steady state in the case of a linearised system, Murray~\cite{murray}. 
\begin{defin}
	We say that a steady state is Turing unstable for the linearised system if it is stable in the absence of diffusion and unstable introducing diffusion. It is also called diffusion
	driven instability.
\end{defin}
\noindent This is the kind of instability induces spatially structured patterns

As for the standard reaction-diffusion problems, in order to prove Turing instability, we need to introduce a diagonalization theory for compact and self-adjoint membrane operators (see Appendix~\ref{proofdiag}). We introduce the eigenvalue problem of the Laplace operator with Neumann and membrane conditions for each specie $u$ and $v$. We call 
\begin{equation}\label{ll}
   	L=- D_u\Delta \quad \mbox{and} \quad \widetilde{L}=-D_v \Delta,
\end{equation} 
where we define
\begin{equation}\label{shortnotat}
D_\phi= \left\{\begin{array}{ll}
	D_{\phi l}, \mbox{ in } \Omega_l,\\
	D_{\phi r}, \mbox{ in } \Omega_r. 
\end{array}\right. \qquad
\phi=\left\{\begin{array}{ll}
	\phi_{_l},& \mbox{ in } \Omega_l,   \\[1ex]
	\phi_{_r},& \mbox{ in } \Omega_r, 
\end{array}\right.	
\end{equation} 
for $\phi=u$ or $v$.
So, we have for $u$
\begin{equation}\label{eigenw}
\left\{\begin{array}{ll}
L w= \lambda w, & \mbox{ in } \Omega_l \cup \Omega_r, \\[1ex]
\nabla w \cdot n= 0, & \mbox{ in } \Gamma_l \cup \Gamma_r,\\[1ex]
D_{ul} \nabla w_l \cdot n= D_{ur} \nabla w_r \cdot n= k_u(w_r-w_l), & \mbox{ in } \Gamma, 
\end{array}
\right. 
\end{equation}
and for $v$, 
\begin{equation}\label{eigenz}
\left\{\begin{array}{ll}
\widetilde{L} z= \eta z, & \mbox{ in } \Omega_l \cup \Omega_r,\\[1ex]
\nabla z \cdot n= 0, & \mbox{ in } \Gamma_l \cup \Gamma_r,\\[1ex]
D_{vl} \nabla z_l \cdot n= D_{vr} \nabla z_r \cdot n= k_v(z_r-z_l), & \mbox{ in } \Gamma.
\end{array}
\right.
\end{equation}
Thanks to the diagonalization theory introduced in Theorem~\ref{diagon}, we infer the following result.
\begin{prop}
	There exist increasing and diverging sequences of real numbers $\{\lambda_{_{n}}\}_{_{n \in N}}$ and $\{\eta_{_{n}}\}_{n\in\N}$ which are the eigenvalues of $L$ and  $\widetilde{L}$, respectively. We call $ \{ w_{_{n}}\}_{_{n \in N}} $ and $\{z_{_{n}}\}_{n\in \N}$ in $L^2(\Omega_l)\times L^2(\Omega_r)$, the corresponding orthonormal basis of eigenfunctions. 
	In particular, we have that $\lambda_0 =0, w_0 =1/|\Omega|^\frac{1}{2}$ and $\eta_0 =0, z_0=1/|\Omega|^\frac{1}{2}$.
\end{prop}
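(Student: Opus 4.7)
The plan is to reduce the two eigenvalue problems (\ref{eigenw}) and (\ref{eigenz}) to an abstract spectral problem to which Theorem~\ref{diagon} applies, and then identify the constant ground state by direct inspection. Since the two problems have identical structure (only the diffusion coefficients and permeabilities change), I will only carry out the argument for $L$ and let the argument for $\widetilde{L}$ follow by the same recipe.

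First I would write the weak formulation of (\ref{eigenw}). Multiplying $L w = \lambda w$ by a test function $\varphi \in H^1(\Omega_l)\times H^1(\Omega_r)$, integrating by parts on each subdomain and using the Neumann condition on $\Gamma_l\cup \Gamma_r$ together with the Kedem--Katchalsky jump on $\Gamma$, one obtains the symmetric bilinear form
\begin{equation*}
a(w,\varphi)=\int_{\Omega_l}D_{ul}\,\nabla w_l\cdot\nabla\varphi_l+\int_{\Omega_r}D_{ur}\,\nabla w_r\cdot\nabla\varphi_r+\int_{\Gamma}k_u(w_r-w_l)(\varphi_r-\varphi_l),
\end{equation*}
paired with the inner product of $H:=L^2(\Omega_l)\times L^2(\Omega_r)$. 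The key structural observation is that this is exactly the setting of compact self-adjoint membrane operators treated in the appendix, because $a$ is symmetric, continuous on $V:=H^1(\Omega_l)\times H^1(\Omega_r)$, and non-negative (the membrane term has the right sign).

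Next I would verify the hypotheses needed to invoke Theorem~\ref{diagon}. Coercivity of $a+\mu\langle\cdot,\cdot\rangle_H$ on $V$ for some $\mu>0$ comes from the Poincar\'e-type estimate on each piece $\Omega_l,\Omega_r$. Hence by Lax--Milgram the shifted operator $L+\mu I$ admits a bounded inverse from $H$ into $V$, and the compact embedding $V \hookrightarrow H$ provided by the Rellich--Kondrachov theorem on each subdomain yields the compactness of $(L+\mu I)^{-1}$ on $H$. Self-adjointness follows from the symmetry of $a$. Theorem~\ref{diagon} then produces an orthonormal basis $\{w_n\}_{n\in\N}$ of $H$ of eigenfunctions of $L$ and an increasing divergent sequence of real eigenvalues $\{\lambda_n\}_{n\in\N}$; the same argument applied to $\widetilde L$ gives $\{z_n\}$ and $\{\eta_n\}$.

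Finally I would identify the ground state. Plugging a constant function $w\equiv c$ into (\ref{eigenw}) makes every equation and every boundary/membrane condition trivially satisfied with eigenvalue $0$, so $0$ is an eigenvalue. It is the smallest because $a(w,w)\geq 0$ for all $w\in V$, implying $\lambda_n\geq 0$ for every $n$. Normalizing in $H$ through $\|c\|_H^2=c^2|\Omega|=1$ gives $w_0=1/|\Omega|^{1/2}$, and likewise $z_0=1/|\Omega|^{1/2}$, $\eta_0=0$. The only genuinely delicate step is the compactness/self-adjointness package for the membrane operator, and that is precisely what the appendix's Theorem~\ref{diagon} is designed to handle; everything else is a routine check of its hypotheses plus a direct computation for the constant mode.
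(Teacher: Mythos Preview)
Your proposal is correct and follows the same overall strategy as the paper's Appendix~\ref{proofdiag}: set up the symmetric bilinear form associated with the membrane Laplacian, apply Lax--Milgram to invert, use Rellich--Kondrachov on each subdomain for compactness, and invoke Theorem~\ref{diagon}. The one genuine difference is how coercivity is obtained. The paper restricts to the mean-zero hyperplane $\int_{\Omega_l\cup\Omega_r}\varphi=0$ and appeals to a membrane Poincar\'e--Wirtinger inequality on the product space, which it explicitly declines to prove; you instead shift by $\mu I$ and invert $L+\mu I$. Your route is the more elementary and self-contained one, since $a(w,w)+\mu\|w\|_{\mathbf L^2}^2\geq \min(D_{ul},D_{ur},\mu)\,\|w\|_{\mathbf H^1}^2$ holds directly without any Poincar\'e-type input (so your aside that a ``Poincar\'e-type estimate'' is needed here is slightly off). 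The paper's mean-zero restriction, on the other hand, builds the zero eigenvalue and its constant eigenfunction into the construction from the start, which is why the appendix closes with the remark interpreting the mean-zero condition as working in the orthogonal complement of the constants. For full rigor in your ground-state identification you might also note that $a(w,w)=0$ forces $\nabla w=0$ on each piece and $w_l=w_r$ on $\Gamma$, so the kernel is exactly the constants and $\lambda_0=0$ is simple.
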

Finally, we are able to state our main theorem (for more details see Theorem~\ref{thmtur}).
\begin{thm}
	Assume the coefficients of System~\eqref{eq} are such that $w_{_n}=z_{_n}$, for all $n\in \N$. Consider the linearised system around the steady state $(\bar{u},\bar{v})$ with $D_v >0$ fixed and assume appropriate conditions on the linearised reaction terms. 
	Then, for $D_u$ sufficiently small, the steady state $(\bar{u},\bar{v})$ is linearly unstable. Moreover, only a finite number of eigenvalues are unstable. 
\end{thm}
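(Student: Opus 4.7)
The plan is to adapt the classical Turing argument, carried out in the common membrane eigenbasis $\phi_n := w_n = z_n$ granted by the hypothesis and the proposition above. First I would linearise~\eqref{eq} around $(\bar u,\bar v)$: writing $u=\bar u+U$, $v=\bar v+V$, the Neumann and Kedem--Katchalsky conditions are linear and homogeneous, so they are preserved by $(U,V)$, which solves
\begin{equation*}
\pt U + L\,U = f_u U + f_v V,\qquad \pt V + \widetilde L\,V = g_u U + g_v V,
\end{equation*}
with $f_u,f_v,g_u,g_v$ evaluated at $(\bar u,\bar v)$. Expanding $U(t,\cdot)=\sum_n\alpha_n(t)\phi_n$ and $V(t,\cdot)=\sum_n\beta_n(t)\phi_n$ in the common orthonormal basis decouples the dynamics mode by mode into the $2\times 2$ linear ODE
\begin{equation*}
\frac{d}{dt}\begin{pmatrix}\alpha_n\\ \beta_n\end{pmatrix}=M_n\begin{pmatrix}\alpha_n\\ \beta_n\end{pmatrix},\qquad M_n:=\begin{pmatrix}f_u-\lambda_n & f_v\\ g_u & g_v-\eta_n\end{pmatrix}.
\end{equation*}

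Since $\phi_n$ is a common eigenfunction of both bulk operators, $-D_u\Delta\phi_n=\lambda_n\phi_n$ and $-D_v\Delta\phi_n=\eta_n\phi_n$ force the rigid relation $\eta_n=(D_v/D_u)\lambda_n=:d\lambda_n$; consistency of the two transmission conditions on $\Gamma$ additionally imposes $k_u/D_{ul}=k_v/D_{vl}$ and $k_u/D_{ur}=k_v/D_{vr}$, the new compatibility conditions on the permeabilities alluded to in the abstract. The mode $n=0$ recovers the reaction Jacobian $J=M_0$ (since $\lambda_0=\eta_0=0$), so stability in the absence of diffusion is exactly the ``appropriate'' pair $\mathrm{tr}(J)=f_u+g_v<0,\ \det J=f_ug_v-f_vg_u>0$. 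For $n\ge 1$ positivity of $L,\widetilde L$ gives $\mathrm{tr}(M_n)<\mathrm{tr}(J)<0$, so instability of mode $n$ is equivalent to $\det M_n<0$; a direct expansion yields the concave-upward quadratic
\begin{equation*}
\det M_n = d\,\lambda_n^2 - (d f_u + g_v)\,\lambda_n + \det J,
\end{equation*}
equal to $\det J>0$ at $\lambda_n=0$.

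The final step is the asymptotic analysis as $D_u\to 0$ with $D_v$ fixed. Adding the standard Turing sign requirement $f_u>0>g_v$ makes $d f_u+g_v>0$ for $d$ large, and the discriminant $(d f_u+g_v)^2-4d\det J$ then becomes positive (the left-hand side grows like $d^2$); by Vieta, $\lambda_++\lambda_-=f_u+g_v/d$ and $\lambda_+\lambda_-=\det J/d$, so the unstable band $(\lambda_-,\lambda_+)$ widens to $(0,f_u)$ as $D_u\to 0$. Since $\{\lambda_n\}$ is increasing and diverges, only finitely many indices satisfy $\lambda_n<\lambda_+$, which already gives the finiteness half of the theorem. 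For the existence of at least one unstable mode, I would write $\lambda_n=D_u\,\mu_n(D_u)$, where $\mu_n(D_u)$ is the $n$-th eigenvalue of the rescaled eigenvalue problem $-\Delta w=\mu w$ with membrane coefficients $k_u/D_{ul}$, $k_u/D_{ur}$; as $D_u\to 0$ these coefficients blow up and $\mu_n(D_u)$ converges to the $n$-th eigenvalue of the Neumann Laplacian on $\Omega$ with continuous transmission across $\Gamma$, an unbounded increasing sequence. Hence $\lambda_n=O(D_u)$ while $\lambda_-\sim\det J/(d f_u)=O(D_u/D_v)$, and the ratio $\lambda_n/\lambda_-\sim D_v f_u\mu_n/\det J$ exceeds $1$ as soon as $\mu_n>\det J/(D_v f_u)$; such indices exist by divergence, and for $D_u$ sufficiently small the finitely many corresponding $\lambda_n$ sit strictly inside $(\lambda_-,\lambda_+)$. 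The most delicate point, in my view, is precisely this quantitative comparison: both $\lambda_n$ and $\lambda_-$ vanish with $D_u$, so the dependence of the membrane eigenvalue problem on $D_u$ has to be tracked uniformly enough to locate the critical threshold and to ensure that at least one nonzero mode is captured.
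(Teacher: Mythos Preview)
Your overall strategy coincides with the paper's: linearise, project onto the common eigenbasis $\phi_n$, extract the $2\times 2$ dispersion relation, and study the sign of $\det M_n$ as a quadratic. The paper carries the computation in the variable $\eta_n$ rather than $\lambda_n$, and this difference is more than cosmetic.

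You correctly record the compatibility $k_u/D_{ul}=k_v/D_{vl}$ (the paper's $\nu_{_K}$), but then contradict yourself in the last paragraph: with $D_v$ and $k_v$ fixed, this ratio is \emph{fixed}, so the rescaled membrane coefficient in the problem $-\Delta w=\mu w$ does \emph{not} blow up as $D_u\to 0$. Your $\mu_n(D_u)$ are therefore constant in $D_u$; in fact $\mu_n=\eta_n/D_v$ exactly. The claimed convergence to the ``Neumann Laplacian with continuous transmission across $\Gamma$'' is false, though harmless for the conclusion, since all you actually use is that $\{\mu_n\}$ is a fixed unbounded sequence.

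This is precisely why the paper works with $\eta_n$: these eigenvalues do not move with $D_u$, and the dispersion quadratic in $\eta$, namely $p(\eta)=\theta\eta^2-(\bar f_u+\theta\bar g_v)\eta+\det A$ with $\theta=D_u/D_v$, has roots satisfying $\eta_-\sim\det A/\bar f_u=O(1)$ and $\eta_+\sim\bar f_u/\theta\to\infty$ as $\theta\to 0$. The fixed divergent sequence $\{\eta_n\}$ then meets the growing interval $(\eta_-,\eta_+)$ for $\theta$ small, with no uniform tracking of a moving eigenvalue problem required; finiteness follows because $\eta_+<\infty$ for each fixed $\theta$. Your ``most delicate point'' dissolves once you pass to the right variable; the ratio computation $\lambda_n/\lambda_-\sim D_v f_u\mu_n/\det J$ you give is correct and is just this same observation rewritten, obscured by an unnecessary rescaling.
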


The chapter is organised in four sections and two appendices. In Section~\ref{turing}, we introduce assumptions allowing us to find conditions in order to have Turing instability in the case of a membrane problem. We refer to Theorem~\ref{thmtur} as main result. In Section~\ref{1d}, we restrict the analysis to the one dimensional case, so that we explicit the eigenfunctions and the equations defining the eigenvalues. In Section~\ref{examples}, Turing analysis is completed by some numerical examples performed with a finite difference implicit scheme in Matlab. We investigate in one dimension the effect of the membrane on Turing patterns. In Subsection~\ref{reaction}, we propose our choice of reaction terms and data setting for the numerical examples. In Subsection~\ref{teta}~and~\ref{diffk}, we illustrate some simulations varying respectively the diffusion and the permeability coefficients. In Subsection~\ref{effeps}, thanks to the choice made for the reaction terms, we analyse oscillatory limiting solutions to a fast reaction-diffusion system.  
In Section~\ref{conclu}, a brief conclusion can be found.
At the end of the work, the reader can find two appendices. In Appendix~\ref{proofdiag}, we introduce the diagonalization theorem for compact, self-adjoint membrane operators and we apply it to the operators $L^{-1}$ and $\widetilde{L}^{-1}$. In Appendix~\ref{Tetameth}, we give more details concerning the numerical method behind the simulations presented in Section~\ref{examples} and we provide also the Matlab code.

\section{Conditions for Turing instability}\label{turing}

In order to study Turing instability, we first assume that there exists a homogeneous steady state $(\overline{u},\overline{v})$ which is a non-negative solution of
$$f(\overline{u},\overline{v})=0, \quad g(\overline{u},\overline{v})=0.$$
Then, we analyse its stability for the linearised dynamical system around this steady state. Later, we come back to the linearisation of Equations~\eqref{eq}, {\it i.e.},
\begin{equation}\label{eqlin}
	\left\{
	\begin{array}{lll}
		\pt u_l-D_{ul}\,\Delta u_l=\overline{f}_u u_l+ \overline{f}_v v_l,\\[1ex]
		\pt v_l -D_{v l}\,\Delta v_l=\overline{g}_u u_l+ \overline{g}_v v_l,\\[1ex]
		\nabla u_l \cdot n \,=\, 0 \,=\, \nabla v_l\cdot n, \\[1ex]
		D_{ul}\,\nabla u_l\cdot n=k_u(u_r-u_l),\\[1ex]
		D_{v l}\,\nabla v_l\cdot n=k_v(v_r-v_l),
	\end{array}
	\right.\qquad
	\left\{
	\begin{array}{lll}
		\pt u_r-D_{ur}\,\Delta u_r=\overline{f}_u u_r+ \overline{f}_v v_r,\\[1ex]
		\pt v_r -D_{v r}\,\Delta v_r=\overline{g}_u u_r+ \overline{g}_v v_r,\\[1ex]
		\nabla u_r\cdot n \,=\, 0 \,=\, \nabla v_r\cdot n, \\[1ex]
		D_{ur}\,\nabla u_r \cdot n=k_u(u_r-u_l),\\[1ex]
		D_{v r}\,\nabla v_r\cdot n=k_v(v_r-v_l),
	\end{array}
	\right.
\end{equation}
in which $\overline{f}_u, \overline{f}_v, \overline{g}_u, \overline{g}_v$ are the partial derivatives of the reaction terms evaluated in $(\overline{u},\overline{v})$, 
and we look for conditions such that the previous steady state is unstable. We follow the standard theory in Murray~\cite{murray}, Perthame~\cite{perthame}.\\

{\bf Conditions for the dynamical system to perform a stable steady state}\\
With no spatial variation (eliminating the diffusion term), we can study the stability of the previous steady state applying a linearisation method around $(\overline{u},\overline{v})$, as in \eqref{eqlin}. 
Setting 
$$z=\left(\begin{array}{ll}
u-\overline{u}\\
v-\overline{v}
\end{array}\right),$$
we get 
\begin{equation*}
\pt z = A z, \quad \mbox{ where } A= \left(\begin{matrix}
\overline{f}_u\quad \overline{f}_v\\
\overline{g}_u\quad \overline{g}_v
\end{matrix}\right).
\end{equation*}	
We look for solutions in the exponential form $z\propto e^{\mu t}$, where $\mu$ is the eigenvalue related to the matrix $A$. The steady state $z=0$ is linearly stable if $Re(\mu)<0$. In that case we can observe an exponential decay to zero. This condition is guaranteed if
\begin{equation}\label{trdet}
\mbox{tr}(A)=\overline{f}_u+\overline{g}_v<0 \quad \mbox{ and } \quad \mbox{det}(A)=\overline{f}_u\,\overline{g}_v- \overline{f}_v\, \overline{g}_u>0.
\end{equation} 
In particular, we assume
\begin{equation}\label{eq: activ inhib}
	\overline{f}_u > 0 \quad \text{and} \quad \overline{g}_v <0,
\end{equation} 
{\it i.e.}, $u$ is called activator and $v$ is the inhibitor.\\

{\bf Conditions to obtain an unstable steady state in the case of spatial variation}\\
Now we consider the complete reaction-diffusion systems linearised around the steady state as in \eqref{eqlin}. Referring to the diagonalization theory in Appendix~\ref{proofdiag}, there exist orthonormal basis of eigenfunctions $ \{ w_{_{n}}\}_{_{n \in N}} $  for $L$ and $\{z_{_{n}}\}_{n\in \N}$ for $\widetilde{L}$ in $L^2(\Omega_l)\times L^2(\Omega_r)$.
We use these basis to decompose $u$ and $v$ as
\begin{equation}\label{soluv}
u(t,x)=e^{\mu t} \sum\limits_{n\in\N}\alpha_{_{n}} w_{_{n}}(x), \qquad 	v(t,x)=e^{\mu t} \sum\limits_{n\in\N} \beta_{_{n}} z_{_{n}}(x),
\end{equation}
where $e^{\mu t}\alpha_{_{n}}=( u, w_{_{n}} )_{\bf L^2}$ and $e^{\mu t}\beta_{_{n}}=( v, z_{_{n}} )_{\bf L^2}$, for all $n\in\N$, with ${\bf L^2}$ which is defined as the $L^2$ product space.
\begin{defin}\label{defl2}
	We define ${\bf L^2}=L^2(\Omega_l)\times L^2(\Omega_r)$. We endow it with the norm 
	$$\|w\|_{\bf L^2}= \left(\|w^1\|^2_{L^2(\Omega_l)} +\|w^2\|^2_{L^2(\Omega_r)}\right)^\frac{1}{2}.$$
	We let $(\cdot,\cdot)_{\bf L^2}$ be the inner product in ${\bf L^2}$.
\end{defin}

Substituting \eqref{soluv} into the linearised reaction-diffusion System~\eqref{eqlin} and using \eqref{eigenw} and \eqref{eigenz}, we infer 
\begin{equation}\label{sost}
	\left\{
	\begin{array}{ll}
		\sum_{n}\left(\alpha_{_{n}}\mu w_{_{n}}+ \alpha_{_{n}}\lambda_{_{n}}w_{_{n}}\right) = \sum_{n} \left(\overline{f}_u \alpha_{_{n}} w_{_{n}}+ \overline{f}_v\beta_{_{n}}z_{_{n}}\right),\\[2ex]
		\sum_n\left(\beta_{_{n}}\mu z_{_{n}} +\beta_{_{n}}\eta_{_{n}} z_{_{n}} \right) = \sum_n\left( \overline{g}_u \alpha_{_{n}} w_{_{n}}+ \overline{g}_v\beta_{_{n}} z_{_{n}}\right),
	\end{array}
	\right.
\end{equation}
with boundary conditions well satisfied. Indeed, for $x\in \Gamma$ we deduce that
\begin{equation}\label{kke}
   	\begin{array}{ll}
   		\sum\limits_{n\in\N} (\;\alpha_{_{n}} e^{\mu t} k_u (w_{rn}(x)-w_{ln}(x)) \;) = \sum\limits_{n\in\N} k_u (\;\alpha_{_{n}} e^{\mu t} w_{rn}(x)- \alpha_{_{n}} e^{\mu t} w_{ln}(x)) \;),\\[2ex]
   		\sum\limits_{n\in\N} (\;\beta_{_{n}} e^{\mu t} k_v (z_{rn}(x)-z_{ln}(x)) \;) = \sum\limits_{n\in\N} k_v (\;\beta_{_{n}} e^{\mu t} z_{rn}(x)- \beta_{_{n}} e^{\mu t} z_{ln}(x)) \;),
   	\end{array}
\end{equation}
whereas on the external boundary Neumann conditions are trivial. 
In view of the structure of~\eqref{sost}, it will be convenient, for analysis, to impose $w_{_n}=z_{_n}$, for all $n\in\N$. This is the case under the following conditions.

\begin{lemma}[Conditions  for $w_n=z_n$, for all $n\in\N$]\label{lemmacond}
	Let 
	\begin{equation}\label{nukteta}
		\nu_{_D}:=\frac{D_{ur}}{D_{ul}}=\frac{D_{vr}}{D_{vl}}, \qquad \nu_{_K}:=\frac{k_u}{D_{ul}}=\frac{k_v}{D_{vl}} \qquad \mbox{ and } \qquad \theta:=\frac{D_{ul}}{D_{vl}} =\frac{D_{ur}}{D_{vr}}.
	\end{equation}
	A sufficient condition to have $w_n=z_n$, for all $n\in\N$, is the following relation
	\begin{equation}\label{etalambda}
		\lambda_{_n}=\theta\eta_{_n}, \quad  \mbox{ for all } n\in\N.
	\end{equation}			
\end{lemma}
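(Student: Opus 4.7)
The plan is to show directly that each eigenfunction $w_n$ of $L$ is also an eigenfunction of $\widetilde{L}$ with eigenvalue $\lambda_n/\theta$. Under the hypothesis $\lambda_n = \theta\eta_n$, this forces $\lambda_n/\theta = \eta_n$, so that $\{w_n\}$ is an orthonormal basis of eigenfunctions of $\widetilde{L}$ with the correct eigenvalues; by the spectral decomposition given in Theorem~\ref{diagon} (up to the usual choice of basis inside degenerate eigenspaces) one may then take $z_n := w_n$.

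To verify the bulk PDE in \eqref{eigenz}, I would simply rewrite the eigen-relation from \eqref{eigenw} on each subdomain: on $\Omega_l$, $-D_{ul}\Delta w_n = \lambda_n w_n$ gives $-D_{vl}\Delta w_n = (D_{vl}/D_{ul})\lambda_n w_n = (\lambda_n/\theta) w_n$; the same calculation on $\Omega_r$ uses $\theta = D_{ur}/D_{vr}$ and produces \emph{the same} eigenvalue $\lambda_n/\theta$, so the two halves of $\widetilde{L}w_n = (\lambda_n/\theta) w_n$ are compatible. The outer Neumann condition $\nabla w_n\cdot n = 0$ on $\Gamma_l\cup\Gamma_r$ is identical for both problems, so nothing needs to be checked there.

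The only substantive step is the membrane condition on $\Gamma$. From \eqref{eigenw}, $\nabla w_{n,l}\cdot n = \nu_{_K}(w_{n,r}-w_{n,l})$ and $\nabla w_{n,r}\cdot n = (k_u/D_{ur})(w_{n,r}-w_{n,l})$. Multiplying the first by $D_{vl}$ and using $k_v = \nu_{_K} D_{vl}$ gives $D_{vl}\nabla w_{n,l}\cdot n = k_v(w_{n,r}-w_{n,l})$. For the right-hand side I need the derived identity
\begin{equation*}
\frac{k_u}{D_{ur}} \;=\; \frac{k_u}{D_{ul}}\cdot\frac{D_{ul}}{D_{ur}} \;=\; \frac{\nu_{_K}}{\nu_{_D}} \;=\; \frac{k_v}{D_{vl}}\cdot\frac{D_{vl}}{D_{vr}} \;=\; \frac{k_v}{D_{vr}},
\end{equation*}
which follows immediately from \eqref{nukteta}; multiplying by $D_{vr}$ then yields $D_{vr}\nabla w_{n,r}\cdot n = k_v(w_{n,r}-w_{n,l})$. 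Both membrane conditions in \eqref{eigenz} are therefore satisfied.

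The main (and only) obstacle is this algebraic bookkeeping on $\Gamma$: the three ratios $\nu_{_D}$, $\nu_{_K}$, $\theta$ in \eqref{nukteta} are precisely designed so that the rescalings on the two sides of the membrane are consistent. Once this is in place, the conclusion is immediate from the uniqueness up to ordering of the spectral decomposition of Theorem~\ref{diagon}.
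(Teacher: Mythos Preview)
Your proposal is correct and follows essentially the same approach as the paper: both observe that, under the ratio conditions \eqref{nukteta}, the eigenvalue problems \eqref{eigenw} and \eqref{eigenz} coincide up to the rescaling of eigenvalues by $\theta$, and then invoke the spectral decomposition of Theorem~\ref{diagon} to conclude that the orthonormal eigenbases can be taken equal. Your version is simply more explicit---you actually verify the bulk equation, the Neumann condition, and the membrane condition on $\Gamma$ line by line, whereas the paper's proof asserts in one sentence that ``$w_n$ and $z_n$ solve the same eigenvalue problem''; your parenthetical remark about degenerate eigenspaces is also slightly more careful than the paper's claim that the proportionality constant must equal~$1$.
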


\begin{proof}
	With relations~\eqref{nukteta}, $w_n$ and $z_n$ solve the same eigenvalue problem (see Problems~\eqref{eigenw}~and~\eqref{eigenz}) for all $n\in\N$.
	From the diagonalization theory (see Theorem~\ref{diagon}), there exists a solutions sequence of eigenvalues and related eigenfunctions. In particular, with condition~\eqref{etalambda}, $w_{_{n}}\propto z_{_{n}}$, {\it i.e.} $w_{_{n}}=C z_{_{n}}$, for all $n\in\N$ but since these basis are orthonormal, the constant $C$ is equal to $1$. 
	
\end{proof}
  
We are now ready to state our main theorem.

\begin{thm}[Turing instability theorem]\label{thmtur}
	Consider the linearised Systems~\eqref{eqlin} around the steady state ($\overline{u}$,$\overline{v}$) with $D_v>0$ fixed.
	We assume \eqref{trdet}-\eqref{eq: activ inhib}, and \eqref{nukteta}-\eqref{etalambda}. Then, for $\theta$ sufficiently small (that means $D_u$), the steady state $(\overline{u}, \overline{v})$ is linearly unstable. Moreover, only a finite number of eigenvalues are unstable. 
\end{thm}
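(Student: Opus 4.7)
The plan is to diagonalize the linearised System~\eqref{eqlin} mode by mode. Lemma~\ref{lemmacond} provides a common orthonormal basis, $w_n=z_n$ for all $n\in\N$, so I can expand $u$ and $v$ in $\{w_n\}$ as in~\eqref{soluv}, project \eqref{eqlin} onto each $w_n$, and use \eqref{eigenw}--\eqref{eigenz} together with the boundary identity~\eqref{kke} to handle the membrane traces. This decouples the linearised problem into a sequence of $2\times 2$ eigenvalue problems
\[
\mu\begin{pmatrix}\alpha_n\\ \beta_n\end{pmatrix}=M_n\begin{pmatrix}\alpha_n\\ \beta_n\end{pmatrix},\qquad
M_n:=\begin{pmatrix}\overline{f}_u-\lambda_n & \overline{f}_v\\ \overline{g}_u & \overline{g}_v-\eta_n\end{pmatrix},
\]
reducing instability to the existence of some $n$ for which $M_n$ has an eigenvalue with positive real part.

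The characteristic polynomial of $M_n$ is $\mu^2-T_n\mu+D_n$, with trace $T_n=\overline{f}_u+\overline{g}_v-\lambda_n-\eta_n$ and determinant $D_n=(\overline{f}_u-\lambda_n)(\overline{g}_v-\eta_n)-\overline{f}_v\overline{g}_u$. Multiplying~\eqref{eigenw} by $w_n$ and integrating by parts produces $\lambda_n\geq 0$, since the membrane contribution $k_u\int_\Gamma(w_{rn}-w_{ln})^2$ is non-negative; similarly $\eta_n\geq 0$. Combined with $\overline{f}_u+\overline{g}_v<0$ from~\eqref{trdet}, this gives $T_n<0$ for every $n$, so instability of mode $n$ is equivalent to $D_n<0$, in which case $M_n$ admits two real eigenvalues of opposite signs and the positive one produces an exponentially growing perturbation.

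Substituting $\lambda_n=\theta\eta_n$ from~\eqref{etalambda}, the condition $D_n<0$ reads $h(\eta_n)<0$ with
\[
h(\eta):=\theta\eta^2-(\overline{f}_u+\theta\,\overline{g}_v)\,\eta+\det(A).
\]
Using \eqref{trdet}--\eqref{eq: activ inhib}, I would verify that for $\theta$ small enough both $\overline{f}_u+\theta\,\overline{g}_v>0$ and the discriminant $(\overline{f}_u+\theta\,\overline{g}_v)^2-4\theta\det(A)$ are positive (each tends to $\overline{f}_u^2>0$ as $\theta\to 0^+$), so $h$ has two positive real roots $0<\eta_-(\theta)<\eta_+(\theta)$, with $\eta_-(\theta)\to\det(A)/\overline{f}_u$ and $\eta_+(\theta)\sim\overline{f}_u/\theta\to+\infty$. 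The main delicate step, and the one I expect to be the principal obstacle, is to guarantee that the discrete spectrum $\{\eta_n\}$ actually meets the open band $(\eta_-(\theta),\eta_+(\theta))$: because $\{\eta_n\}$ is increasing and diverges to $+\infty$, while the lower endpoint stays bounded and the upper endpoint blows up as $\theta\to 0^+$, for $\theta$ small enough the band contains at least one $\eta_n$ with $n\geq 1$ (note $h(0)=\det(A)>0$ rules out $n=0$), yielding the claimed instability. Finally, since $\eta_+(\theta)<+\infty$ for every fixed $\theta>0$ and $\eta_n\to+\infty$, only finitely many modes fall inside the band, giving the stated finiteness.
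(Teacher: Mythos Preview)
Your proposal is correct and follows essentially the same route as the paper: projection onto the common eigenbasis yields the $2\times2$ dispersion relation, the trace is negative by~\eqref{trdet} so instability reduces to negativity of the constant term, which is exactly the quadratic $p(\eta_n)$ in~\eqref{nec}; you then recover the same asymptotics $\eta_-\sim\det(A)/\overline{f}_u$, $\eta_+\sim\overline{f}_u/\theta$ and conclude as the paper does. The only minor slip is the parenthetical ``each tends to $\overline{f}_u^2$'': the linear coefficient $\overline{f}_u+\theta\,\overline{g}_v$ tends to $\overline{f}_u$, and it is the discriminant that tends to $\overline{f}_u^2$; the conclusion is unaffected.
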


\begin{proof}
Using the orthogonality of the eigenfunctions in Equation~\eqref{sost} and assuming conditions~\eqref{nukteta}~and~\eqref{etalambda} in Lemma~\ref{lemmacond}, we arrive to
\begin{equation}\label{proj}
	\left\{
	\begin{array}{ll}
		\alpha_{_{n}}\mu + \alpha_{_{n}}\lambda_{_{n}}  = \overline{f}_u \alpha_{_{n}}  + \overline{f}_v\beta_{_{n}} ,\\[1ex]
		\beta_{_{n}}\mu   +\beta_{_{n}}\eta_{_{n}}   = \overline{g}_u \alpha_{_{n}}  + \overline{g}_v\beta_{_{n}}.
	\end{array}
	\right.
\end{equation}

This linear system has $\alpha_{_{n}}$ and $\beta_{_{n}}$ as unknowns. In order to have nonnegative solutions we need to assure that the determinant of the coefficients of the system is zero, {\it i.e.} 
\begin{equation*}
\det\left(\begin{matrix}
\mu+\lambda_{_{n}}-\overline{f}_u & -\overline{f}_v\\[1ex]
-\overline{g}_u & \mu+\eta_{_{n}}-\overline{g}_v 
\end{matrix}\right)=0.
\end{equation*}
Hence, we infer that we have the so-called {\it dispersion relation}
\begin{equation}\label{mu}
\mu^2+\mu[\eta_{_{n}}-\overline{g}_v+\lambda_{_{n}}-\overline{f}_u] + \eta_{_{n}}\lambda_{_{n}} - \lambda_{_{n}} \overline{g}_v + \overline{f}_u \eta_{_{n}} + \det(A)=0.
\end{equation}
As underlined in~\eqref{etalambda}, the eigenvalues are proportional. Therefore, through condition~\eqref{nukteta}, we can write that $\lambda_{_{n}}= \theta \, \eta_{_{n}}$.
As a consequence, we can rewrite~\eqref{mu} to have an equation of $\mu(\eta_{_{n}})$. Indeed, we get that 
\begin{equation}\label{mul}
\mu^2+\mu[\eta_{_{n}} (1+\theta)- \mbox{tr}(A)] + \theta \eta_{_{n}}^2 - \eta_{_{n}} (\overline{f}_u + \theta\,\overline{g}_v) + \det(A)=0.
\end{equation}
For the steady state to be unstable to spatial disturbances, we require that $\mbox{Re}(\mu(\eta_{_{n}}))>0$. Since we are working with condition~\eqref{trdet}, the first order coefficient of this polynomial is positive. Consequently, we need to impose that
\begin{equation}\label{nec}
p(\eta_{_{n}}):=\theta \eta_{_{n}}^2 - \eta_{_{n}} (\overline{f}_u + \theta\,\overline{g}_v) + \det(A) <0.
\end{equation}
Because $\eta_{_{n}}$, $\theta$ and $\det(A)$ are positive quantities, the polynomial in \eqref{nec} can take negative values only for 
\begin{equation}\label{condtur}
\overline{f}_u+ \theta \overline{g}_v  >0	
\end{equation}
sufficiently large and $\theta\det(A)$ sufficiently small. We remember that one of the conditions to have stability without diffusion was tr$(A)= \overline{f}_u+\overline{g}_v<0$. This implies that $\theta\neq 1$, in other words $D_{u}\neq D_{v}$.
 
Inequality~\eqref{condtur} is necessary but not sufficient for $\mbox{Re}(\mu(\eta_{_{n}}))>0$. For the convex function $p(\eta_{_{n}})$ to be strictly negative for some nonzero $\eta_{_{n}}$, the minimum must be strictly negative. So if we look for the minimum, we find its coordinates
\begin{equation}\label{min}
	\eta_{\mbox{{\tiny min}}}=\frac{\overline{f}_u+\theta \overline{g}_v}{2\theta} \quad \mbox{ and } \quad p_{\mbox{{\tiny min}}}= \det(A)- \frac{(\overline{f}_u+\theta \overline{g}_v)^2}{4\theta}.
\end{equation}
Then, the condition $p_{\mbox{{\tiny min}}}<0$ corresponds to $\frac{(\overline{f}_u+\theta \overline{g}_v)^2}{4\theta}>\det(A)$.
Finally, given specific functions $f$ and $g$, we can find the values of $\theta$ which assure that the minimum $p_{\mbox{{\tiny min}}}<0$. We call $\theta_c$ the critical diffusion ratio such that $p_{\mbox{{\tiny min}}}=0$, {\it i.e.} the appropriate root of
\begin{equation}\label{tetac}
\overline{g}_v^2\, \theta_c^2 + 2\,(\overline{f}_u \overline{g}_v-2\det(A))\,\theta_c + \overline{f}_u^2=0.
\end{equation}
It corresponds to the value of $\theta$ at which there is a bifurcation phenomenon (see Subsection~\ref{teta} in which we analyse some related examples). 
	
The range of values of $\eta_{_{n}}$ such that $p(\eta_{_{n}})<0$ is $\eta_-<\eta_{_{n}} <\eta_+, \mbox{ with }$
{\small
\begin{equation}\label{range}
\eta_-=\frac{|\overline{f}_u+\theta \overline{g}_v|-\sqrt{|\overline{f}_u+\theta \overline{g}_v|^2-4\theta \det(A)}}{2\theta},\quad
\eta_+=\frac{|\overline{f}_u+\theta \overline{g}_v|+\sqrt{|\overline{f}_u+\theta \overline{g}_v|^2-4\theta \det(A)}}{2\theta}.
\end{equation}}
If we consider the solutions given by \eqref{soluv},  the dominant contribution as $t$ increases are the modes for which Re$(\mu(\eta_{_{n}}))>0$ since all the other modes tend to zero exponentially. By consequence, we can consider the following approximation for large $t$ 
\begin{equation*}
u(t,x)\sim\!\!\!\!\!\sum\limits_{\tiny\begin{array}{cc}
	n{\in}\N\\
	\eta_- {<}\eta_{_n}{<}\eta_+
	\end{array}}\!\!\!\!\!\alpha_{_{n}} e^{\mu(\eta_{_{n}}) t} z_{_{n}}(x) \quad \mbox{ and } \quad v(t,x)\sim\!\!\!\!\!\sum\limits_{\tiny\begin{array}{cc}
	n{\in}\N\\
	\eta_- {<}\eta_{_n}{<}\eta_+
	\end{array}}\!\!\!\!\! \beta_{_{n}} e^{\mu(\eta_{_{n}}) t} z_{_{n}}(x).
\end{equation*}
So, the larger is the range defined by $\eta_-$ and $\eta_+$, the larger is the number of unstable modes not decreasing in time and, then, the modes which infer Turing instability. In order to estimate this interval, we can restrict to the regime $\theta$ small which is the most common in data. In that way, Taylor expansion of the square root gives
\begin{equation*}
\eta_\pm=\frac{\overline{f}_u+\theta \overline{g}_v}{2 \theta} \left[1 \pm \sqrt{1-\frac{4 \det(A) \theta}{(\overline{f}_u+\theta \overline{g}_v)^2}}\right]\; \sim \;\frac{\overline{f}_u}{2\theta } \left[1 \pm \left(1-\frac{2 \det(A) \theta}{(\overline{f}_u+\theta \overline{g}_v)^2}\right)\right].
\end{equation*}
Finally, we obtain
\begin{equation*}
\eta_- \sim \frac{\det(A)}{\overline{f}_u} =O(1) \quad \mbox{ and } \quad \eta_+ \sim \frac{\overline{f}_u}{\theta } \gg 1.	
\end{equation*}
Taking $\theta$ sufficiently small (that means $D_{u}$), the interval $(\eta_-,\eta_+)$ becomes very large, therefore we can find some eigenvalues $\eta_{_{n}}$ in this interval. We remember that $\eta_{_{n}}$ are increasing eigenvalues converging to infinity and so there is only a finite number of them in that interval. This concludes the proof of the theorem.

\end{proof}


\section{One dimensional case}\label{1d}
In the one dimensional case, we can construct an explicit solution of the eigenvalue problem.
We consider the domain $(0,x_m) \cup (x_m,L)$, with $x_m=L/2$. Given relations~\eqref{nukteta}-\eqref{etalambda} and with our short notation \eqref{shortnotat} for $D_v$, the eigenfunctions are determined by
\begin{equation}\label{eigenw1}
\left\{\begin{array}{ll}
-\pxx z_{_{n}}= \frac{\eta_{_n}}{D_v} z_{_{n}},  \\[1ex]
\px z_{_{ln}}(0) = 0 = \px z_{_{rn}}(L), \\[1ex]
\px z_{_{ln}}(x_m) = \nu_{_D} \px z_{_{rn}}(x_m) = \nu_{_K} (z_{_{rn}}(x_m)-z_{_{ln}}(x_m)).
\end{array}
\right. 
\end{equation}	
We decompose $z_{_n}$, for all $n\in\N$, as a combination of sinus and cosinus. Nevertheless, Neumann boundary conditions impose a cosinusoidal form. 
Hence, since eigenfunctions are defined up to a multiplicative constant, we deduce that $z_{_n}$, for all $n\in\N$, has components
$$z_{_{ln}}(x)=  C_1\cos(a_{_n} x) \quad \mbox{ and } \quad z_{_{rn}}(x)= \cos(b_{_n}(x-L)).$$  

\noindent In order to verify Equations~\eqref{eigenw1}, we get, for all $n\in\N$,
\begin{equation*}
a_{_n}^2=\frac{\eta_{_{n}}}{D_{vl}}\quad \mbox{ and }\quad b_{_n}^2=\frac{\eta_{_{n}}}{D_{vr}},
\end{equation*}
so, in particular,
\begin{equation*}
a_{_n}^2=\nu_{_D} b_{_n}^2, \quad \mbox{ with }\;\nu_{_D}=\frac{D_{ur}}{D_{ul}}=\frac{D_{vr}}{D_{vl}}.
\end{equation*}
Since the eigenfunctions satisfy Kedem-Katchalsky membrane conditions, we also have the following conditions on $x_m=L/2$, for all $n\in\N$, 

\begin{subequations}
	\begin{empheq}[left=\empheqlbrace]{align}
	&-C_1\; b_{_n}\sin\left(b_{_n} \sqrt{\nu_{_D}} \;\frac L 2 \right)= \sqrt{\nu_{_D}}\; b_{_{n}} \sin\left(b_{_n} \frac{L}{2} \right),\nonumber\\[1ex]
	&D_{vr}\; b_{_n} \sin\left(b_{_n} \frac{L}{2}\right)= k_v\left(\cos\left(b_{_n} \frac L 2 \right)-C_1 \cos \left(b_{_n}\sqrt{\nu_{_D}}\;\frac{L}{2}\right)\right),\nonumber
	\end{empheq}
\end{subequations}
Then, we infer that, for all $n\in\N$, either $b_{_n}\!=\!0$, so $\eta_{_n}\!=\!0$ and $z_{_{ln}}\!=\!z_{_{rn}}\!=\! \mbox{const}$, or if $b_{_n} \!\neq\! 0$,
\begin{subequations}
	\begin{empheq}[left=\empheqlbrace]{align}
	&C_1= - \sqrt{\nu_{_D}}\; \frac{ \sin\left(b_{_n} \frac{L}{2} \right) }{ \sin\left(b_{_n}\sqrt{\nu_{_D}} \frac{L}{2} \right) },\nonumber\\[1ex]
	&D_{vr}\; b_{_n} \tan\left(b_{_n} \frac{L}{2}\right)=k_v\left[ 1 + \sqrt{\nu_{_D}}\;\frac{ \tan\left(b_{_n} \frac{L}{2}\right) }{ \tan\left(b_{_n} \sqrt{\nu_{_D}} \frac{L}{2}\right) }\right]. \nonumber
	\end{empheq}
\end{subequations}
Hence, we have a system of two equations with $2$ unknowns: $C_1$ and $\eta_{_n}$. 
We conclude that, for all $n\in\N$,
\begin{subequations}
	\begin{empheq}[left=\empheqlbrace]{align}
	&C_1= - \sqrt{\nu_{_D}}\; \frac{ \sin\left(\frac{\sqrt{\eta_{_{n}}}}{ \sqrt{D_{vr}} } \frac{L}{2} \right) }{ \sin\left(\frac{\sqrt{\eta_{_{n}}}}{\sqrt{D_{vl}}} \frac{L}{2} \right) },\\[1ex]
	&\sqrt{\eta_{_{n}}}\; \sqrt{D_{vr}} \tan\left(\frac{\sqrt{\eta_{_{n}}}}{ \sqrt{D_{vr}} } \frac{L}{2}\right)=k_v\left[ 1 + \sqrt{\nu_{_D}}\;\frac{ \tan\left(\frac{\sqrt{\eta_{_{n}}}}{ \sqrt{D_{vr}} } \frac{L}{2}\right) }{ \tan\left(\frac{\sqrt{\eta_{_{n}}}}{ \sqrt{D_{vl}} } \frac{L}{2}\right) }\right].\label{lambda}
	\end{empheq}
\end{subequations}

We can express the eigenvalues as the positive roots of the continuous function $r: \R^+ \rightarrow \R$, such that
\begin{equation}\label{eq: r}
  	r: \quad\xi \quad\longmapsto\quad \sqrt{\xi}\;\; \frac{  \tan\left(\frac{\sqrt{\xi}}{ \sqrt{D_{vl}} } \frac{L}{2}\right) \; \tan\left(\frac{\sqrt{\xi}}{ \sqrt{D_{vr}} } \frac{L}{2}\right) }{ \left[  \tan\left(\frac{\sqrt{\xi}}{ \sqrt{D_{vl}} } \frac{L}{2}\right) + \sqrt{\nu_{_D}}\;\tan\left(\frac{\sqrt{\xi}}{ \sqrt{D_{vr}} } \frac{L}{2}\right) \right] }- \frac{k_v}{\sqrt{D_{vr}} }.
\end{equation}
see Figure~\eqref{lambdageneral}.
\captionsetup[figure]{labelfont=bf,textfont={it}}	
 \begin{figure}[H] 	
    	\begin{center}
    		\includegraphics[scale=0.25]{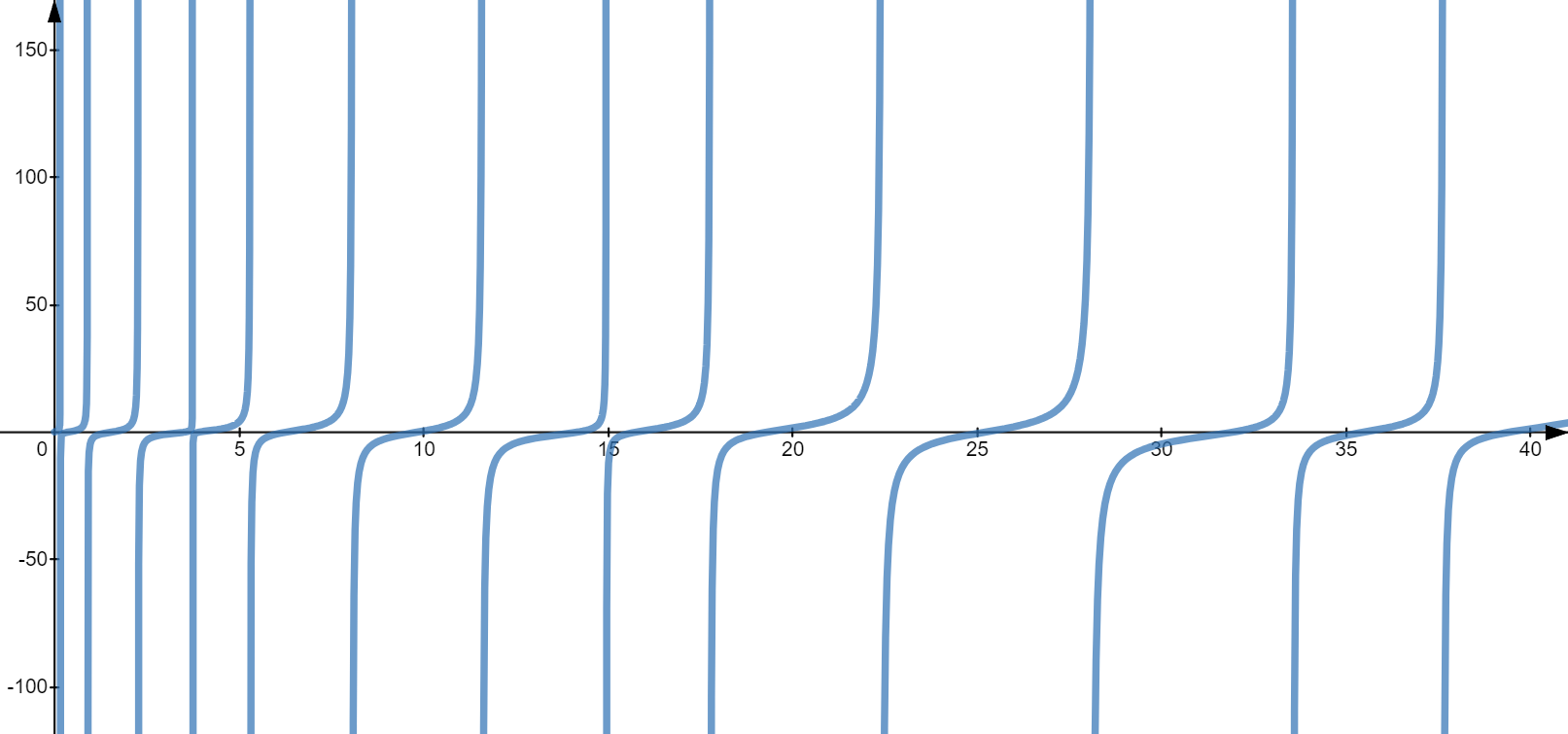}
    		\caption{We represent here the function $\xi \longmapsto r(\xi)$ in \eqref{eq: r}, considering $L=1$, $D_{vl}= 10^{-1}$, $D_{vr}= 10^{-2}$ and $k_v=10^{-4}$. Its roots correspond to the eigenvalues $\eta_{_n}$.}
    		\label{lambdageneral}
    	\end{center}
 \end{figure}

In order to simplify Equation~\eqref{lambda}, in the following, we restrict to the case $\nu_{_D}=1$, {\it i.e.} $D_{vl}=D_{vr}$ and $D_{ul}=D_{ur}$, which is a reasonable assumption when the medium in the left and right domain have similar properties of diffusivity. Then, relation~\eqref{lambda} can be written for all $n\in\N$ as
\begin{equation}
	C_1=-1\quad \mbox{ and } \quad \sqrt{\eta_{_{n}}}\; \tan \left(\frac{\sqrt{\eta_{_{n}}}}{\sqrt{D_{vr}}}\; \frac{L}{2}\right) = 2 \;\frac{k_v}{\sqrt{D_{vr}}}.\label{lambdau}
\end{equation}
The simplified function $r(\cdot)$ of the form 
\begin{equation}\label{eq: r simple}
	r(\xi)= \sqrt{\xi}\;\;  \tan\left(\frac{\sqrt{\xi}}{ \sqrt{D_{vr}} } \frac{L}{2}\right) - 2\frac{k_v}{\sqrt{D_{vr}} },
\end{equation}
is depicted in Figure~\ref{lambdanu1}.

\captionsetup[figure]{labelfont=bf,textfont={it}}
\begin{figure}[H] 	
	\begin{center}
		\includegraphics[scale=0.5]{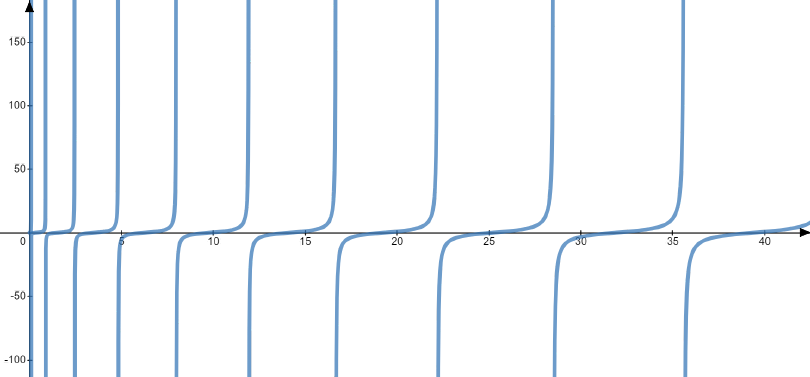}
		\caption{Same as Figure~\ref{lambdageneral}, with $\nu_{_D}=1$, and $D_{vr}= D_{vl}= 10^{-2}$. That is relation~\eqref{eq: r simple}, in place of \eqref{eq: r}.}
		\label{lambdanu1}
	\end{center}
\end{figure}


\section{Numerical examples}\label{examples}
We investigate through numerical examples the effect of the membrane on appearance and shape of Turing's instability. We use the finite difference scheme of a $\Theta$-method with $\Theta=1$, Morton
and Mayers \cite{morton}, Quarteroni et al. \cite{qss}, with a first-order discretization of the boundary and membrane conditions (see Appendix~\ref{Tetameth}). At first, we present in details the expression of the reaction terms and the general data setting that we are using (Subsection~\ref{reaction}). Then, we show some examples. In Subsection~\ref{teta},  we perform numerical examples with different choices for the value of $\theta$ (see Equation~\eqref{nukteta}), referring to the analyses performed in Section~\ref{turing} concerning the values of $\theta_c$ (see Equation~\eqref{tetac}). In Subsection~\ref{diffk}, we exhibit simulations for different values of the membrane permeability coefficients. Finally, in Subsection~\ref{effeps}, we perform oscillatory behaviours when a fast reaction-diffusion system converges to ill-posed cross-diffusion equations and we observe the evolution of these instabilities under the effect of the membrane permeability parameter.

\subsection{Choice of reaction terms and data setting}\label{reaction}
We choose a simple setting with mass conservation, already analysed by Moussa et al. \cite{moussa} in a
Turing instabilities study. In the following, we consider System~\eqref{eq} with 
\begin{equation}\label{react}
f(u,v)= \eps^{-1} (v-h(u)), \quad g(u,v)=-f(u,v), \quad \mbox{ with }\; h(u)~=~\alpha\, u\,(u-~1)^2
\end{equation}
and (see also Figure~\ref{h}) we notice the conditions  
\begin{equation}\label{hprop}
h \in C^2(\R^+,\R^+),\; h(0)=0,\; h(u)>0\; \mbox{ for } u>0\; \mbox{ and }\; h'(u)=\alpha (1-u)(1-3 u) >-1.
\end{equation}
We observe that there is mass conservation which is the first basic property of System~\eqref{eq}~with~\eqref{react}.
Looking at the latter condition $h'(u)\!>\!-1$, the admissible values of $\alpha$ are $0<\alpha<3$. In the numerical examples, we choose the value $\alpha=1$.
The small parameter $\eps>0$ measures the time scale of the reaction compared to diffusion. The smaller is $\eps$, the more numerous are the patterns. Indeed, for $\eps < 1$, we are dealing with a fast reaction-diffusion system and, in the limit $\eps\rightarrow 0$, its Turing instability turns out to be equivalent to the instability due to the ill-posedness for the limiting cross-diffusion equations, caused by backward parabolicity, Moussa \textit{et al.}~\cite{moussa}, Perthame and Skrzeczkowski~\cite{jakubperth}.
In the following numerical examples, we take $\eps=1$ which corresponds to a standard reaction-diffusion system, whereas in Subsection~\ref{effeps} we let  vary $\eps$ to obtain the numerical zero-limit. 

We briefly prove that the reaction terms in \eqref{react}, with general values of $\eps, \alpha$ and $h$, satisfy the analysis in Section~\ref{turing}.
\begin{claim}\label{claim}
	Considering reaction terms in \eqref{react}, we claim that:
	\begin{enumerate}
		\item In the absence of diffusion, there is a unique stable equilibrium point $(\overline{u},\overline{v})$ to which
		solutions converge monotonically.
		\item The same steady state $(\overline{u}, \overline{v})$ is asymptotically Turing unstable for the linearised reaction-diffusion system under the condition \begin{equation}\label{condturh}
			\theta\!+\!h'(\overline{u})\!<\!0.
		\end{equation}
	\end{enumerate}
\end{claim}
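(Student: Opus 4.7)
The plan is to treat the two claims sequentially, exploiting the very simple structure induced by $g=-f$.

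\textbf{For (1):} The key observation is that, with no diffusion, the ODE system $\pt u = f(u,v)$, $\pt v = -f(u,v)$ immediately gives $\pt(u+v)=0$, so $u+v \equiv u_0+v_0 =: M$ is conserved. Substituting $v=M-u$ reduces the dynamics to the scalar ODE $\pt u = \eps^{-1}\phi(u)$ with $\phi(u) := M - u - h(u)$. From \eqref{hprop}, $\phi'(u) = -1 - h'(u) < 0$, so $\phi$ is strictly decreasing, with $\phi(0) = M \geq 0$ and $\phi(u)\to -\infty$ as $u\to\infty$. Hence $\phi$ has a unique zero $\bar{u}\in[0,\infty)$, $\bar{v}:=M-\bar{u}$, and because $\phi(u)>0$ for $u<\bar u$ and $\phi(u)<0$ for $u>\bar u$, solutions converge monotonically to $\bar u$ along the mass-conservation line.

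\textbf{For (2):} I would plug $f$ and $g$ from \eqref{react} into the dispersion relation~\eqref{mul}. The Jacobian at $(\bar u,\bar v)$ is
\begin{equation*}
A = \eps^{-1}\begin{pmatrix} -h'(\bar u) & 1 \\ h'(\bar u) & -1 \end{pmatrix},
\end{equation*}
so $\mathrm{tr}(A) = -\eps^{-1}(1+h'(\bar u)) < 0$ (using $h'>-1$) and, due to mass conservation, $\det(A)=0$. This is a degenerate case of \eqref{trdet}; nevertheless the dispersion relation becomes
\begin{equation*}
\mu^2 + \mu\bigl[\eta_{_n}(1+\theta) - \mathrm{tr}(A)\bigr] + \eta_{_n}\bigl[\theta\eta_{_n} - (\bar f_u + \theta\bar g_v)\bigr] = 0.
\end{equation*}
Since $\bar f_u + \theta \bar g_v = -\eps^{-1}(h'(\bar u) + \theta)$, the linear-in-$\mu$ coefficient is positive, and $p(\eta_{_n})$ factors nicely as $\eta_{_n}[\theta\eta_{_n} - (\bar f_u+\theta\bar g_v)]$. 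Under the assumed condition~\eqref{condturh}, $\bar f_u + \theta \bar g_v > 0$, so $p(\eta_{_n}) < 0$ precisely on $(0,(\bar f_u+\theta\bar g_v)/\theta)$. Since $\eta_{_n}\nearrow\infty$, for any fixed parameters there is a finite, nonempty range of indices with $\mathrm{Re}(\mu(\eta_{_n}))>0$, and the constant mode $\eta_0=0$ just gives $\mu=0$ or $\mu=\mathrm{tr}(A)<0$, i.e.\ the neutral mass-conservation direction plus exponential decay along it.

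\textbf{Main obstacle.} The proof does not follow by a black-box application of Theorem~\ref{thmtur}, because the hypothesis $\det(A)>0$ in \eqref{trdet} fails: the conservation law $\pt(u+v)=0$ forces $\det(A)=0$. The cleanest way to handle this is to notice that part~(1) replaces the missing $\det(A)>0$ by stability on the one-dimensional mass-conserving manifold (the orthogonal direction is the neutral mode $\eta_0=0$), and that in the dispersion polynomial the $\det(A)$ term is precisely the one that drops out, leaving $p(\eta_{_n}) = \eta_{_n}[\theta \eta_{_n} - (\bar f_u+\theta\bar g_v)]$, which is automatically negative on a nontrivial interval as soon as $\bar f_u + \theta\bar g_v > 0$. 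Thus the only algebraic inequality to verify is \eqref{condturh}, which is exactly the hypothesis of the claim; since $\theta>0$, it also forces $h'(\bar u)<0$, consistent with the activator role of $u$ in~\eqref{eq: activ inhib}.
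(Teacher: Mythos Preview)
Your proof is correct and follows essentially the same route as the paper. For part~(1), both you and the paper reduce to the scalar ODE $\pt u = \eps^{-1}(M-u-h(u))$ via mass conservation and use $h'>-1$ to obtain a strictly decreasing right-hand side with a unique, monotonically attracting zero. For part~(2), the paper invokes ``the same general steps as in the proof of Theorem~\ref{thmtur}'' and computes the vertex of $p(\eta)$ from~\eqref{min}, obtaining $p_{\min}=-\theta\eta_{\min}^2<0$ and the range $\eta_-=0$, $\eta_+=-\eps^{-1}(1+\theta^{-1}h'(\bar u))$; you instead observe $\det(A)=0$ at the outset and factor $p(\eta)=\eta\bigl[\theta\eta-(\bar f_u+\theta\bar g_v)\bigr]$ directly, landing on the same interval. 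The only substantive difference is that you explicitly flag the degeneracy $\det(A)=0$ and explain how the conservation law replaces the missing hypothesis $\det(A)>0$ of~\eqref{trdet} (the neutral mode at $\eta_0=0$ being the mass direction), whereas the paper applies the general machinery without commenting on this point; your treatment is therefore a mild but genuine gain in rigor rather than a different argument.
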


\begin{proof}
{\it Statement $1.$}\\
We take the dynamical system
$$\frac{d}{dt} \left(\begin{array}{ll}
	u\\
	v
\end{array}\right) = \left(\begin{array}{ll}
	\eps^{-1} (v-h(u))\\
	-\eps^{-1} (v-h(u))
\end{array}\right),$$
which has steady state $(\overline{u}, \overline{v})$ such that $\overline{v}=h(\overline{u})$. 
Thanks to mass conservation of the system, we can write $M:=u(t)+v(t)=u(0)+v(0)$ and $\frac{d}{dt} u= \eps^{-1} (M-u-h(u))=:\eps^{-1} G(u(t))$. Since $u,v$ are positive functions, the function $G$ has the following properties: $G(0)=M>0$, $G'(u)<0$ and $G(+\infty)=-\infty$. Consequently, there exists a unique stable equilibrium point $(\overline{u}, \overline{v})$, monotonically achieved (since $G(u)>0$ for $u\leq \overline{u}$ and $G(u)<0$ for $u\geq \overline{u}$), that cancels $G$ such that $\overline{u}=M-\overline{v}$ and $\overline{v}=h(\overline{u})$.

\noindent{\it Statement $2.$}\\ 
Applying the same general steps as in the proof of Theorem~\ref{thmtur}, for the steady state to be unstable under spatial disturbances we require (see \eqref{condtur}) that $\theta + h'(\overline{u})<0,$
with $-\eps^{-1} (\theta + h'(\overline{u}))$ sufficiently large and $\theta$ sufficiently small.
This is a necessary and sufficient condition when it is assured that the minimum of the polynomial $p(\eta)$ in \eqref{nec} is negative. Looking back at Equations~\eqref{min} with reactions in \eqref{react}, we get
\begin{equation}\label{minfg}
	\eta_{\mbox{{\tiny min}}}=\eps^{-1}\,\frac{|h'(\overline{u})+\theta|}{2\theta} \quad \mbox{ and } \quad p_{\mbox{{\tiny min}}}=-\theta\eta_{\mbox{{\tiny min}}}^2.
\end{equation}
It is clear that $p_{\mbox{{\tiny min}}}<0$ for all $\eta_{\mbox{{\tiny min}}}\neq 0$, {\it i.e.} for $\theta \neq -h'(\overline{u})$. Otherwise, $p_{\mbox{{\tiny min}}}$ is equal to zero and, then, we have found the critical diffusion ratio $\theta_c = -h'(\overline{u})$ at which there is a bifurcation phenomenon.
Moreover, calculating the range where we can find unstable modes, like in \eqref{range}, we deduce that 
\begin{equation}\label{rangefg}
	\eta_-=0 \; \mbox{ and } \eta_+=-\eps^{-1}\left(1+\theta^{-1} h'(\overline{u})\right).
\end{equation}
This range is larger if condition \eqref{condturh} with $-\eps^{-1} (\theta + h'(\overline{u}))$ sufficiently large and $\theta$ sufficiently small are satisfied. 
In particular, varying the parameter $\eps$, we observe that the smaller it is, the larger is the range $(\eta_-,\eta_+)$, {\it i.e.} a larger number of eigenvalues generating instability can be found. This concludes the proof of the claim.

\end{proof}

We can easily calculate the steady state $(\overline{u},\overline{v})$ thanks to the mass conservative structure of the system, as pointed out in the previous proof. Indeed, adding up the reaction-diffusion equations for $u$ and $v$ and integrating over the space, we get for all $t\geq 0$,
\begin{equation*}
	\int_{0}^{L} u(x,t)+v(x,t) dx= \int_{0}^{L} u_0(x)+v_0(x) dx.
\end{equation*}
Then, we conclude that the steady state depends on the length of the domain ( here $[0,L]$ ) and on the initial data, {\it i.e.}
\begin{equation}\label{equil}
	\overline{u}+\overline{v}= \frac{1}{L} \; \int_{0}^{L} u_0(x)+v_0(x) dx, \quad \mbox{ with } \overline{v}=h(\overline{u}).
\end{equation}
In particular, this steady state is Turing unstable when $h'(\overline{u})< - \theta$, as it can be deduced from relation~\eqref{condturh}. So, $h'(\overline{u})<0$ which means that $h'(\overline{u})\in \left(-\frac{\alpha}{3}, 0\right)$. Then, we infer that the Turing unstable steady state is such that $\overline{u}\in \left(\frac 1 3, 1\right) $ and $\overline{v}\in \left(0, \alpha\frac{4}{27}\right) $ (see Figure~\ref{h}).

\captionsetup[figure]{labelfont=bf,textfont={it}}
\begin{figure}[H] 	
	\begin{center}
		\includegraphics[scale=0.17]{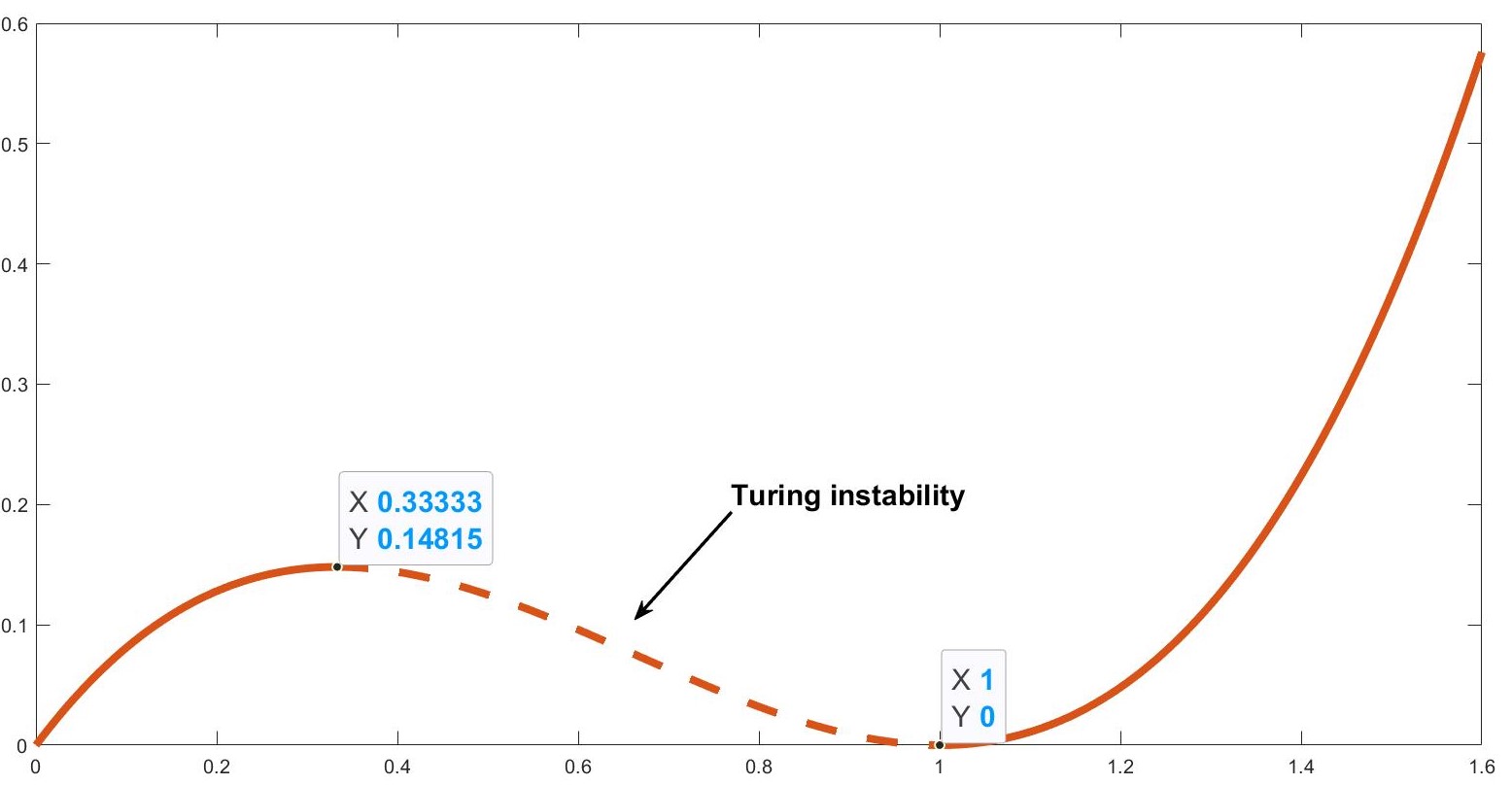}\quad \includegraphics[scale=0.17]{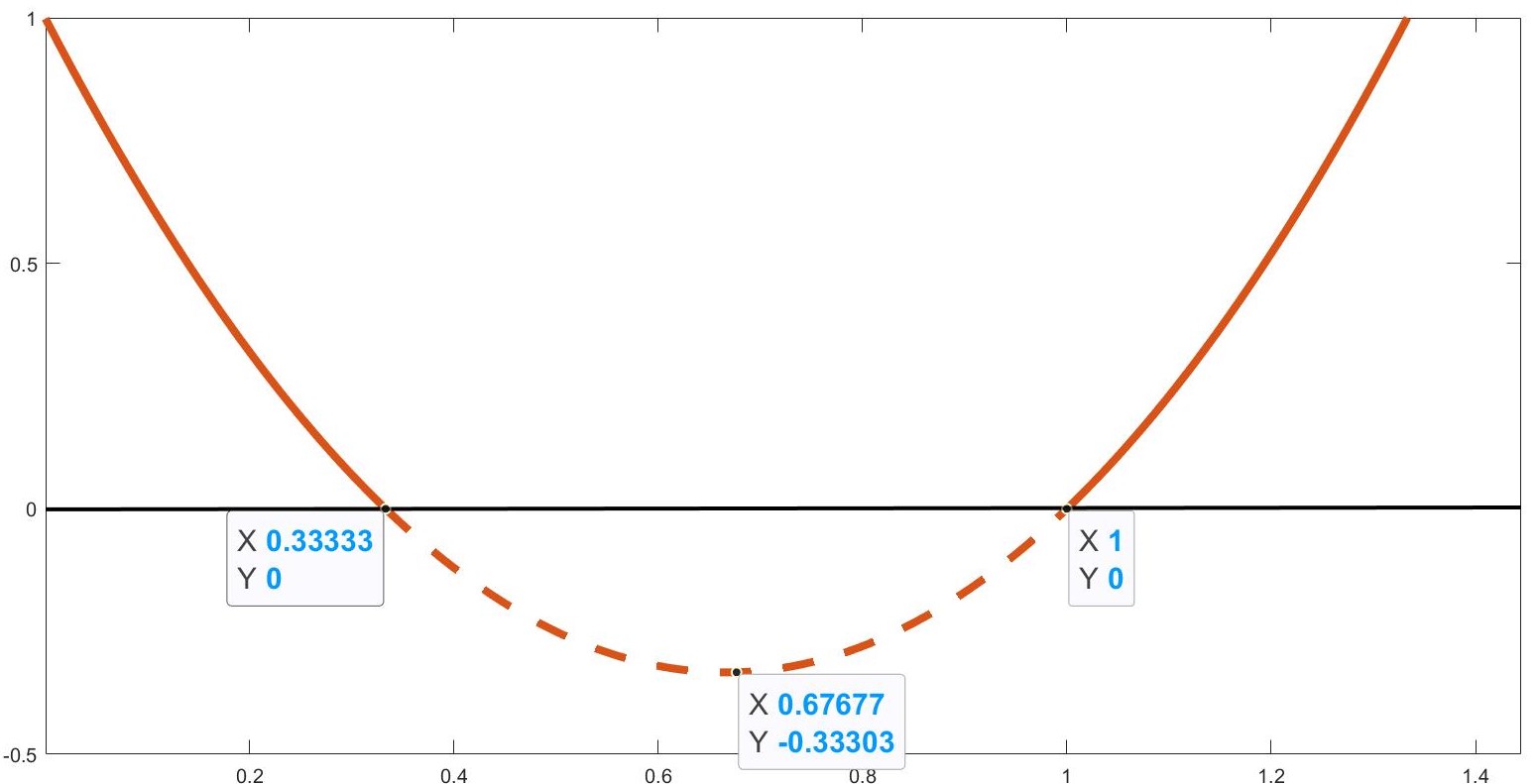}
		\caption{We represent $h(u)$ in \eqref{react}-\eqref{hprop} (left picture) and $h'(u)$ (right picture) with $\alpha=1$. In dashed lines, the instability region for $\overline{u}$ and $h(\overline{u})=\overline{v}$.}
		\label{h}
	\end{center}
\end{figure}

Finally, we present the main data chosen for simulations in Subsection~\ref{teta}~-~\ref{effeps}. We show the time convergent solutions (in the left for $u$ and in the right for $v$) in the spatial interval $[0,L]$, with $L=1$ and with a discretization step $\Delta x=\frac{L}{200}$. 
As shown in Figure~\ref{initdata}, we take the initial data as
\begin{equation*}
	u_0(x)=\left\{
	\begin{array}{ll}
		\frac{7}{15}+\frac 1 5\sin(4\pi x), & \mbox{for } 0\leq x\leq \frac{1}{2},\\[1ex]
		\frac 1 5+\frac 1 5 \sin(4\pi x), & \mbox{for } \frac 1 2 < x\leq 1
	\end{array}\right.
	\mbox{ and }\;
	v_0(x)=\left\{
	\begin{array}{ll}
		\frac 1 3-\frac 1 5\sin(4\pi x), & \mbox{for } 0\leq x\leq \frac{1}{2},\\[1ex]
		\frac 3 5 - \frac 1 5 \sin(4\pi x), & \mbox{for } \frac 1 2 < x\leq 1.
	\end{array}\right.
\end{equation*}
\captionsetup[figure]{labelfont=bf,textfont={it}}
\begin{figure}[H] 	
	\begin{center}
		\includegraphics[scale=0.3]{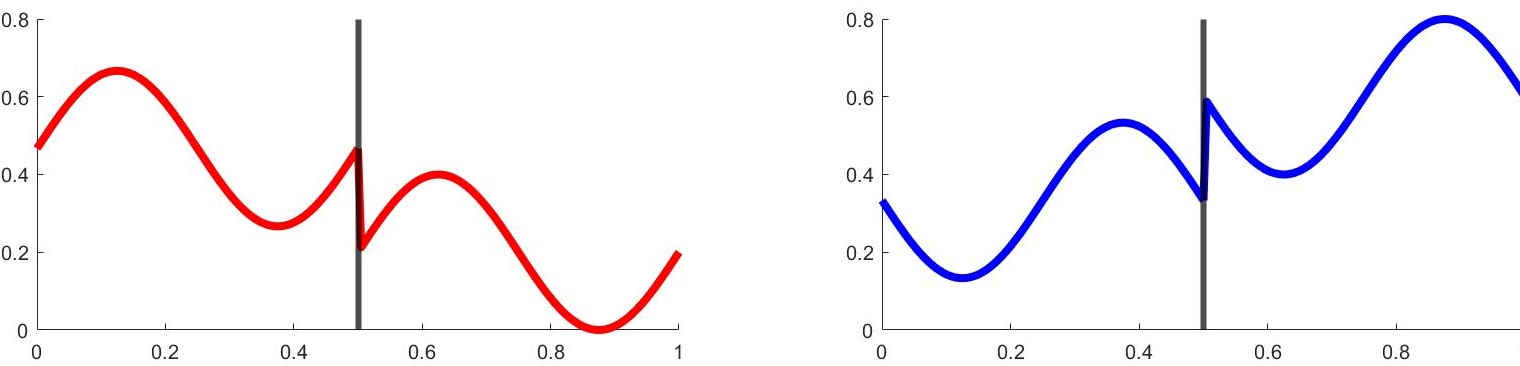}
		\caption{Representation of the initial data $u_0$ (in the left) and $v_0$ (in the right). }
		\label{initdata}
	\end{center}
\end{figure}
\noindent Looking back at \eqref{equil}, we deduce that the steady state $(\overline{u}, \overline{v})$ is such that
$\overline{u}+\overline{v}=\frac{4}{5}$ with $\overline{v}=h(\overline{u})$. With $\alpha=1$, we conclude that 
\begin{equation}\label{equileh}
	\overline{u}=0.7545\in \left(\frac 1 3, 1\right), \;\;\overline{v}=h(\overline{u})=0.0454 \in \left(0,\frac{4}{27}\right) \;\mbox{ and }\; h'(\overline{u})=-0,3101\in \left(-\frac 1 3, 0\right).
\end{equation}
If not specified, we guarantee conditions \eqref{nukteta} and \eqref{etalambda} in Lemma~\ref{lemmacond} with $\nu_{_D}=1$ such that
\begin{equation}\label{param}
	D_{ul}=D_{ur}=\theta,\quad  k_u= \theta \, k_v \quad\mbox{ with } D_{vl}=D_{vr}=1\; \mbox{ and }\; \eps=1.
\end{equation}

\subsection{Effect of the diffusion ratio}\label{teta}
	We illustrate the effect of different values of the diffusion ratio~$\theta$ in~\eqref{nukteta}. We consider the reaction terms in \eqref{react}, initial data as in Figure~\ref{initdata} and data setting as in \eqref{param} 
	with $k_v=1$ fixed. We remember that when we vary $\theta$, there exists a critical diffusion ratio $\theta_c$ for Turing's instability. As analysed in the proof of Claim~\ref{claim} and in \eqref{minfg}, we can define
	\begin{equation}
		\theta_c= -\, h'(\overline{u}) \quad \mbox{ and } \quad \eta_{_{min}}= \frac{1}{2\theta\,\eps}\; |\theta + h'(\overline{u})|, \quad p_{_{min}}= -\, \theta\, \eta_{_{min}}^2,
	\end{equation}
	where $\theta_c$ is the critical diffusion ratio at which $p_{_{min}}$, the minimum of the polynomial \eqref{nec} calculated in $\eta_{_{min}}$, is zero.
	For $\theta= \theta_c$, we remark that $\eta_{_{min}}= p_{_{min}}=0$. Otherwise, for $\theta< \theta_c$, the minimum is strictly negative (see Figure~\ref{p}) and so we can calculate the non-empty range of instability. However, in the case $\theta >\theta_c$, {\it i.e.} $\theta > |h'(\overline{u})|$, we cannot find Turing patterns, since condition \eqref{condturh} does not hold.
	\captionsetup[figure]{labelfont=bf,textfont={it}}
	\begin{figure}[H] 	
		\begin{center}
			\includegraphics[scale=0.26]{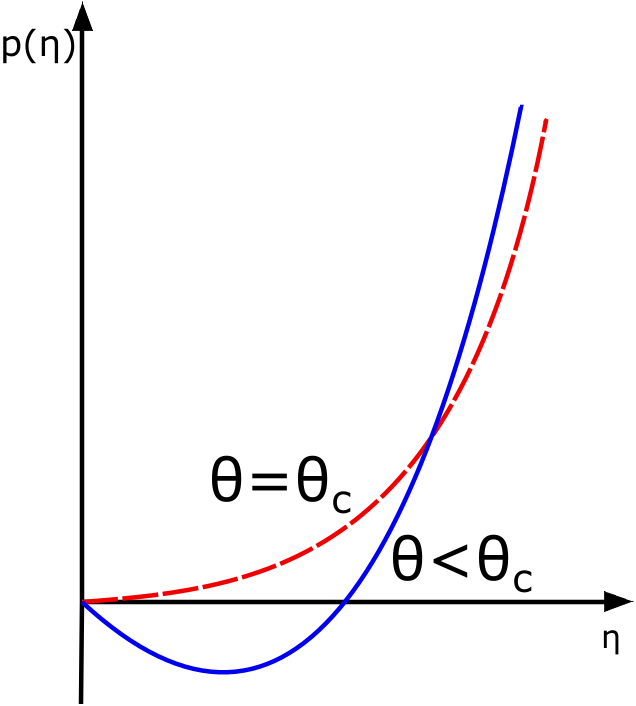}
			\caption{Representation of the function in \eqref{nec} determining the unstable modes with the reaction terms in \eqref{react} for $\theta=\theta_c$ (dashed line) and for $\theta < \theta_c$ (solid line). So, $p(\eta)=\theta \eta^2+\eps^{-1} (\theta + h'(\overline{u})) \eta$ with $\eps^{-1}=2$, $h'(\overline{u})=-0.3101$ and $\theta=10^{-1}<\theta_c$. }
			\label{p}
		\end{center}
	\end{figure}
	
	In the numerical examples, we consider decreasing values of $\theta\leq \theta_c$ in order to see both what happens in an appropriate neighbourhood of $\theta_c$ and far away from this threshold. Looking back at \eqref{equileh}, we infer that $\theta_c= -h'(\overline{u}) = 3.101 \cdot 10^{-1}$.  We recall the expression for $\eta_-, \eta_+$ in \eqref{rangefg} and the one dimension Equation~\eqref{lambdau} that defines the eigenvalues of $u$ and $v$:
	$$\eta_-=0, \quad \eta_+ = -\eps^{-1} (1 + \theta^{-1} h'(\overline{u})) \quad\mbox{ and } \quad\sqrt{\eta_n}\; \tan \left(\frac{\sqrt{\eta_n}}{\sqrt{D_{vr}}}\;\frac{L}{2}\right) = 2 \;\frac{k_v}{\sqrt{D_{vr}}}.$$
	
	\begin{description}
		\item[Case 1.] We take $\theta= \theta_c=  3.101 \cdot 10^{-1}$ and the other parameters according to \eqref{param} ($k_v=1$, $k_u=3.101\cdot 10^{-1}$). 
		\captionsetup[figure]{labelfont=bf,textfont={it}}
		\begin{figure}[H] 	
			\begin{center}
				\includegraphics[scale=0.35]{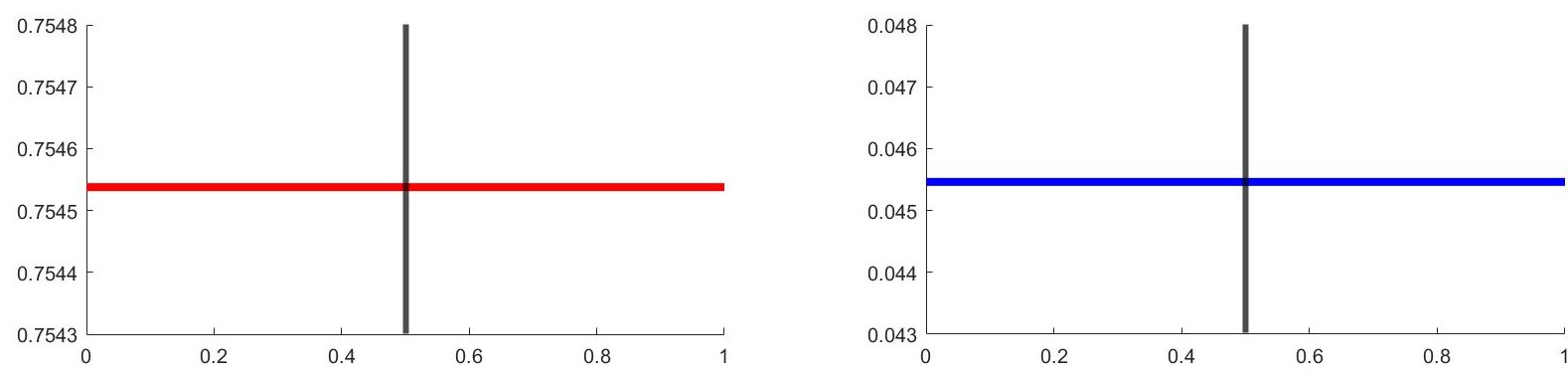}
				\caption{Taking $\theta= \theta_c$, as we can see in Figure~\ref{p}, we cannot define an unstable range $(\eta_-, \eta_+)$ such that the polynomial $p(\eta)$ is strictly negative. In fact, we are at the bifurcation point. That is why, on a long time scale, we do not observe patterns neither for $u$ (in the left) nor for $v$ (in the right). Instead, as we are working with a reaction-diffusion equation with dissipative membrane conditions, we notice the convergence to the equilibrium $(\overline{u},\overline{v})$ in \eqref{equileh}.}
				\label{1tetac}
			\end{center}
		\end{figure}
		
		\item[Case 2.] We take $\theta\!=\! 7.8\cdot 10^{-2}$ and the other parameters according to \eqref{param} ($k_v\!=\!1$, $k_u\!=\!7.8\cdot~10^{-2}$). In this case, $\eta_+ = 2.97$ and so only the first eigenvalue $\eta_1= 2.96$ corresponds to an unstable mode ($\eta_n > \eta_+,$ for $n\geq 2$). 
		\captionsetup[figure]{labelfont=bf,textfont={it}}
		\begin{figure}[H] 	
			\begin{center}
				\includegraphics[scale=0.35]{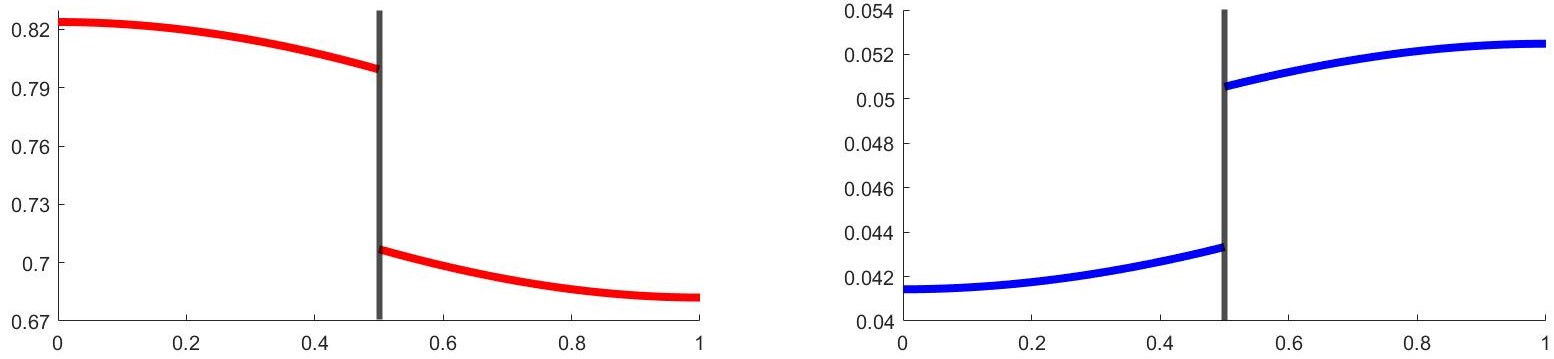}
				\caption{Since $\theta <\theta_c$, on a long time scale, solutions do not reach the steady state even if they are nearby. Considering the only $\eta_1\in (\eta_-, \eta_+)$, we do not observe a really interesting pattern but a piecewise function. We can appreciate the inclination of the solutions in the left and right limit at the membrane: they satisfy Kedem-Katchalsky conditions. We remark that with membrane problems, a nearly constant function with a jump at the membrane stands for a pattern.}
				\label{2teta}
			\end{center}
		\end{figure}
		
  \item[Case 3.] We consider $\theta\!=\! 3\cdot 10^{-4}$ and the other parameters according to \eqref{param} ($k_v\!=\!1$, $k_u\!=\!3\cdot~10^{-4}$). These data give $\eta_+ = 1032.6$ and so we have $6$ eigenvalues in $(\eta_-,\eta_+)$.
  \captionsetup[figure]{labelfont=bf,textfont={it}}
  \begin{figure}[H] 	
	    	\begin{center}
	    		\includegraphics[scale=0.35]{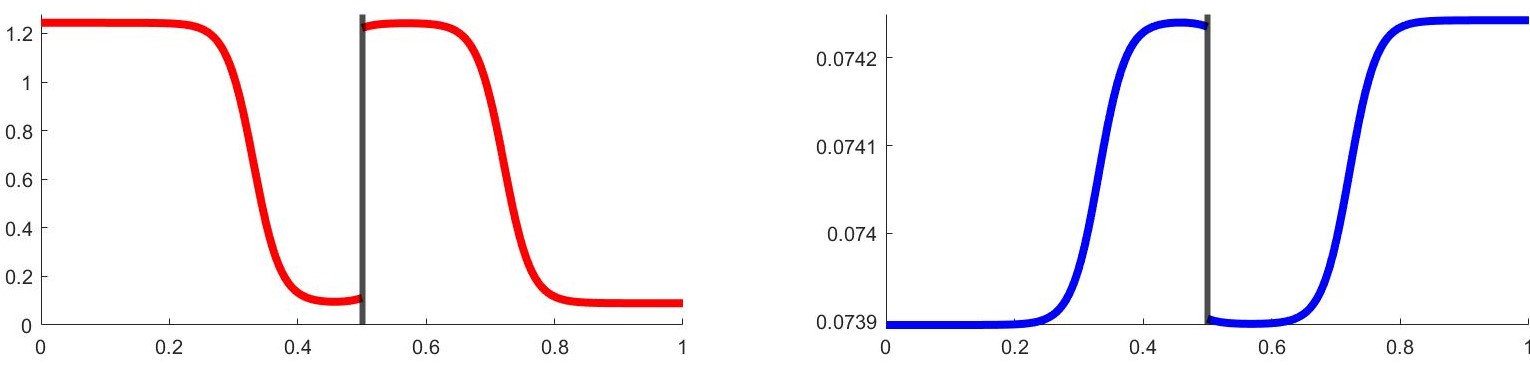}
	    		\caption{Choosing $\theta=3\cdot 10^{-4}$, we succeed in having more considerable patterns for both the species $u$ and $v$ in the temporal limit.
	    		Moreover, it is again clear the well-verification of membrane conditions. As remark, we underline that until $5$ eigenvalues in $(\eta_-,\eta_+)$, over long time interval, the shape does not change significantly respect to Figure~\ref{2teta}. Then, the diffusion ratio $\theta$ has to be sufficiently small to appreciate more complex patterns. }
	    		\label{3teta}
	    	\end{center}
  \end{figure}
					
		\item[Case 4.] We take $\theta= 10^{-5}$ and the other parameters according to \eqref{param} ($k_v=1$, $k_u= 10^{-5}$). In this case, $\eta_+ = 31009$ and so we have several eigenvalues in $(\eta_-,\eta_+)$.
		\captionsetup[figure]{labelfont=bf,textfont={it}}
		\begin{figure}[H] 	
			\begin{center}
				\includegraphics[scale=0.3]{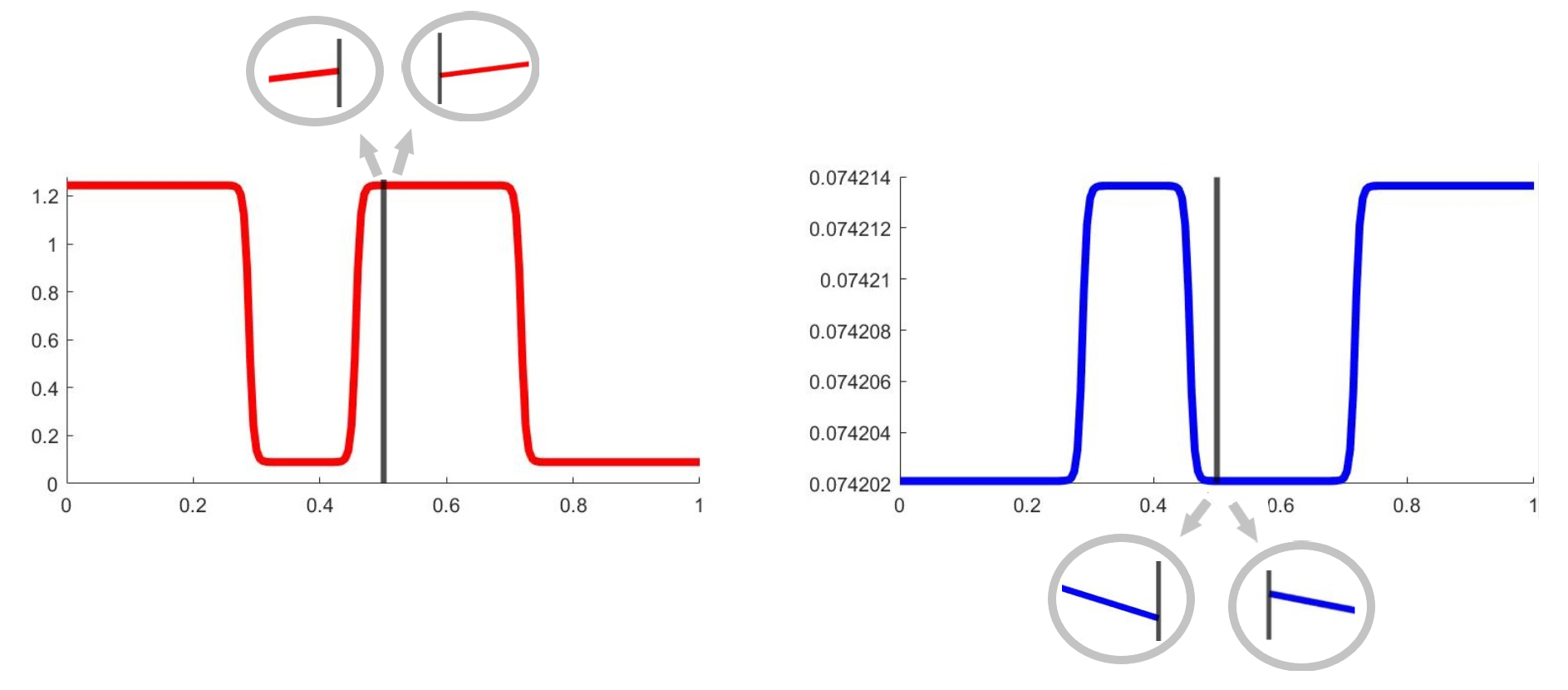}
				\caption{Here, $\theta$ is on a very different scale respect to $\theta_c$ and there is a big number of unstable modes $\eta_n$. Hence, we observe remarkable and beautiful patterns both for $u$ and $v$. The jump at the membrane is not evident with this choice of parameter. Then, in the zoom circles, we can appreciate the inclination of the solutions in the left and right limit at the membrane remarking that they satisfy Kedem-Katchalsky conditions.}
				\label{4teta}
			\end{center}
		\end{figure}
\end{description}		
\vspace{-1mm}
In conclusion, fixing $k_v\in(0,+\infty)$ and decreasing $\theta$ from its critical value $\theta_c$, we can notice a remarkable change in patterns. In particular, starting from the convergence to the equilibrium for $\theta=\theta_c$ in Figure~\ref{1tetac}, we then approach three different, but discontinuous, shapes. Considering a reduced number of eigenvalues in the unstable range, 
solutions show a basic pattern which is a nearly constant function with a jump at the membrane (as in Figure~\ref{2teta}). Decreasing $\theta$, we get more complex and stiffer shapes depending on the number of unstable modes found in the interval $(\eta_-,\eta_+)$ (see Figures~\ref{3teta}~,~\ref{4teta}).

\subsection{Values of the permeability coefficients}\label{diffk}

We show here another set of simulations in which we vary only the permeability coefficient $k_v\in[0,+\infty]$ (then, $k_u$, given the coupling $k_u=\theta k_v$ deducible from~\eqref{nukteta}) in the data chosen in \eqref{param}. So, we better discover the effect of the membrane on Turing patterns. In particular, we can distinguish two limiting situations: $k_v=~0=~k_u$, which is the one without transmission and it corresponds  to have two separate and not communicating domains, and $k_v=+\infty=k_u$ (numerically realised taking $k_v= 10^8$), {\it i.e.} we have full permeability at the membrane, so it corresponds to have a unique connected domain. In this two extreme cases, we recover the results of a standard reaction-diffusion system without the effect of the membrane. 
Considering different values of the permeability coefficients, we can estimate the position of the eigenvalues on the real lines and then, in the unstable interval, in order to follow the same arguments as in the previous subsection.
Indeed, we recall the dependence on $k_v$ of the eigenvalues equation~\eqref{lambdau} such that if $\eta_{_n}\neq 0$, we have that
$$\sqrt{\eta_{_n}}\; \tan \left(\frac{\sqrt{\eta_{{_n}}}}{\sqrt{D_{vr}}}\;\frac{L}{2}\right) = 2 \;\frac{k_v}{\sqrt{D_{vr}}}.$$	
{\bf In the case } $\boldsymbol{k_v=0=k_u}$, the previous equation reduces to $\sin\left(\frac L 2 \frac{\sqrt{\eta_{{_n}}}}{\sqrt{D_{vr}}}\right)=0$ and so we can calculate the eigenvalues as 
$$\eta_{{_n}}= D_{vr}\;\frac{(2n)^2 \pi^2}{L^2}.$$
{\bf In the case }  $\boldsymbol{k_v=+\infty=k_u}$, we have $\cos\left(\frac L 2 \frac{\sqrt{\eta_{{_n}}}}{\sqrt{D_{vr}}}\right)=0$ and, then, the eigenvalues are of the form
$$\eta_{{_n}} = D_{vr}\; \frac{(2n+1)^2 \pi^2}{L^2}.$$

We can affirm that the eigenvalues $\eta_n^k$ related to a certain value of $k=k_u, k_v\in(0,+\infty)$ are situated between the eigenvalues $\eta_n^0$ for $k=0$ and the ones for $k=+\infty$, {\it i.e.} $\eta_n^{\infty}$. Moreover, fixing $n$ and varying $k$, the eigenvalues $\eta_n^k$ pass continuously from $\eta_n^0$ to $\eta_n^{\infty}$. This can be observed in two different ways: from a numerical result or a more analytical one.\\

{\it  Numerical result}\\
For $L=1$, we consider the continuous function
\begin{equation}\label{xi}
	q:\quad\xi \quad \longmapsto \quad \xi \;\tan\left(\frac \xi 2 \right)-2\;\frac{k_v}{D_{vr}}.
\end{equation}
Numerically, we find the zeros $\xi_n= \frac{\sqrt{\eta_{_n}}}{\sqrt{D_{vr}}},\; n\geq 0$ for different values of $\frac{k_v}{D_{vr}}$ and, then, of $k_v$ (see Table \ref{tabeigenu}).
\captionsetup[table]{labelfont=bf,textfont={it}} 
\begin{table}[H]
	\centering
	\begin{tabular}{|c|c|c|c|c|}
		\hline
		$k_v/D_{vr}$     & $\xi_1$     & $\xi_2$     & $\xi_3$       & $\xi_4$\\
		\hline
		$0$              & $0$         & $2\pi$      & $4\pi$        & $6\pi$ \\[1ex]
		$0.5 $           & $0.41\pi$   & $2.09\pi$   & $4.05\pi$     & $6.04\pi$  \\[1ex]
		$ 5 $            & $0.83\pi$   & $2.56\pi$   & $4.39\pi $    & $6.29\pi$  \\[1ex]
		$10^8$           & $\pi$       & $3\pi$      & $5\pi $       & $7\pi$  \\[1ex]
		\hline
	\end{tabular}
	\caption{We report the values of the first four zeroes $\xi_n = \frac{\sqrt{\eta_{_n}}}{\sqrt{D_{vr}}}, \; n=1,...,4$ for different values of $k_v$, since $D_{vr}$ is fixed, including the two limiting cases and two intermediate ones.}
	\label{tabeigenu}
\end{table}
\noindent Then, we recover the previous eigenvalue formulas for the two limiting situations and we can also observe that for fixed $n$, the eigenvalues $\eta_n^{k_v}$ increase continuously with $k_v$ towards $\eta_n^\infty$.\\

{\it Analytical result}\\	
Another way to look at this phenomenon and to better observe continuity of the $\xi_n$'s changing $k_v$ and fixing $n$, it is to represent the function in \eqref{xi} (see Figure~\ref{tang}). We consider $n=1$ and so the interval $\xi \in (0,\pi)$. Since we have a monotonous function for $\xi \in (0,\pi)$, there exists a unique intersection with the horizontal line $y=k:=2\frac{k_v}{D_{vr}}=2\frac{k_u}{D_{ur}}$ and for $0<k_1<k_2<+\infty$, we get $0<\xi_1^{k_1}<\xi_1^{k_2}<+\infty$.
\captionsetup[figure]{labelfont=bf,textfont={it}}
\begin{figure}[H] 	
	\begin{center}
		\includegraphics[scale=0.3]{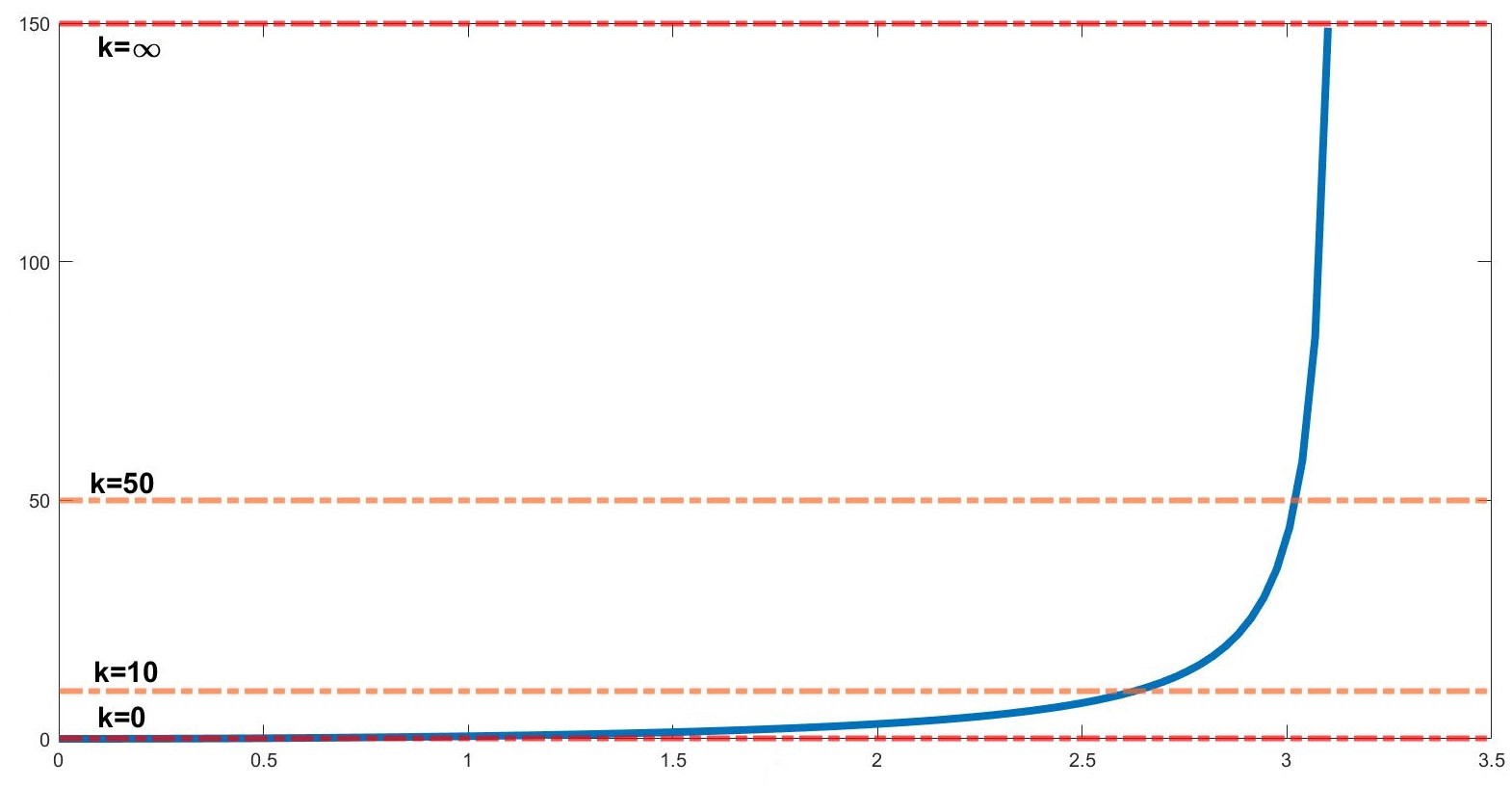}
		\caption{Representation of the first root $\xi_1$ of $q$ in \eqref{xi}, as the intersection between the function $\tilde q:\;\xi\longmapsto\xi\tan\left(\frac \xi 2\right)$ for $\xi \in (0,\pi)$ (solid line) and one of the dashed lines defined by the permeability coefficient by the relation $k:=2\frac{k_v}{D_{vr}}=2\frac{k_u}{D_{ur}}$.}
		\label{tang}
	\end{center}
\end{figure}

\begin{oss}
	For $k_u=k_v=0$, the eigenvalue $0$ is double. This is because we have two different domains with Neumann boundary conditions and so for both we find the zero eigenvalue.
\end{oss}

In Example~\ref{es1}, we refer to Table~\ref{tabeigenu} and to the fact that the first non-zero eigenvalue for $k_v=+\infty=k_u$ is smaller than the one for $k_v=0=k_u$. So, we look for an unstable range such that $\eta_1^\infty \in (\eta_-, \eta_+)$ but $\eta_1^0 \notin (\eta_-, \eta_+)$. Then, we expect to see a different behaviour of the solutions. We perform also an intermediate case in which $k_v$ is small but positive in order to see the evolution in shapes passing from a situation in which there are no unstable modes to another one in which there is only one of them. In Example~\ref{es2}, we show the appearance and the evolution of patterns in both the limiting cases and an intermediate one.


\begin{es}\label{es1}
	We look for some appropriate values of the diffusion coefficients in order to have $\eta_+ \in [D_{vr}\pi^2, D_{vr} 4\pi^2)$. In that way, we expect to see patterns for $k_v\in(0,+\infty]$, since the first eigenvalue is in the unstable range (see Table~\ref{tabeigenu}). Instead, for $k_v=0$, there is any non-zero eigenvalue in $(\eta_-,\eta_+)$, then solutions should converge to the steady state in~\eqref{equileh}.
	Therefore, choosing $\theta=10^{-2}$ in~\eqref{param},
	we infer that $\eta_+=30.01\in [\pi^2, 4\pi^2)$. The results are the following.
	\begin{description}
    \item[Case 1.] We take $k_v=0$ and the other data according to \eqref{param} ($\theta=10^{-2}$, $k_u=0$). For construction, we gain the absence of patterns. 
	\captionsetup[figure]{labelfont=bf,textfont={it}}
	\begin{figure}[H] 	
		\begin{center}
			\includegraphics[scale=0.35]{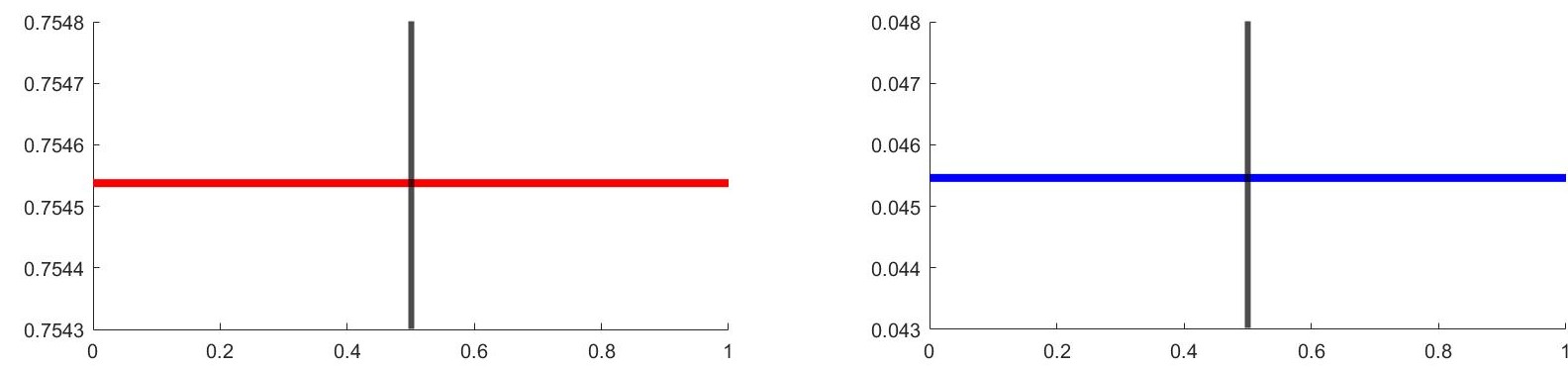}
			\caption{As expected, taking $k_v=0$, we can appreciate the convergence to the steady state $(\overline{u}, \overline{v})$ previously found. Indeed, we choose the data in order to not include positive eigenvalues in the unstable interval $(\eta_-,\eta_+)$ in the case of zero permeability.}
			\label{1k0}
		\end{center}
	\end{figure}
 
    \item[Case 2. ] We take $k_v= 10^{-2}$ and the other data according to \eqref{param} ($\theta=10^{-2}$, $k_u=10^{-6}$). We gain a single unstable mode which is $\eta_1= 0.04$.
    \captionsetup[figure]{labelfont=bf,textfont={it}}
    \begin{figure}[H] 	
    	\begin{center}
    		\includegraphics[scale=0.3]{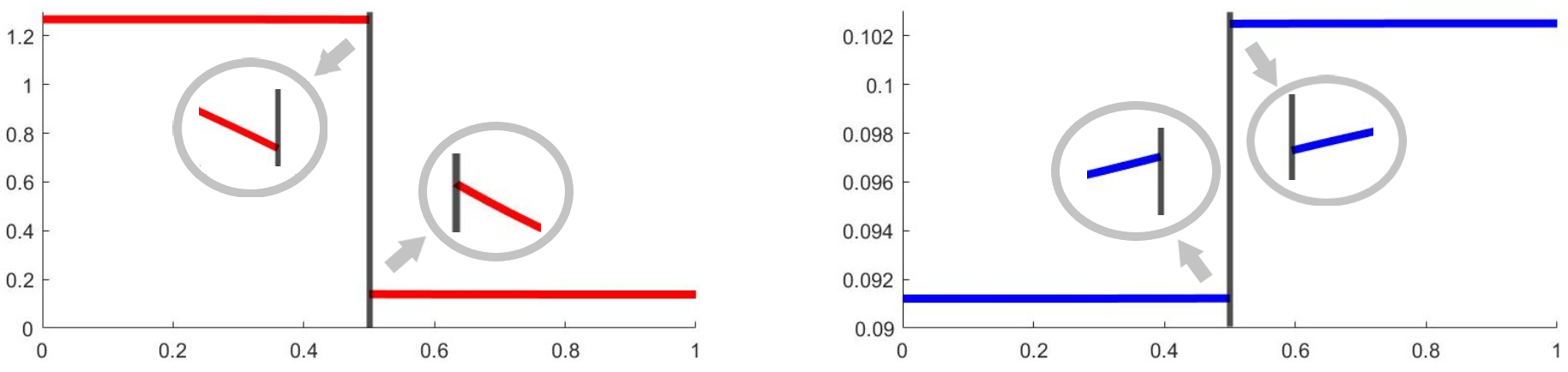}
    		\caption{In the case $k_v=10^{-2}$, we can find a small positive eigenvalue in a neighbourhood of zero which is then in the unstable range $(0,30.01)$. Then, we observe the appearance of a simple pattern which is only a piecewise function with a jump at the membrane. In the zoom circles, we focus the attention on solutions derivatives at the membrane to better appreciate that membrane conditions are satisfied. Moreover, the sign of the derivatives corresponds to the sign of the jump.  }
    		\label{2k1}
    	\end{center}
    \end{figure}  
        
	\item[Case 3.] We consider $k_v= 10^8$ and the other data according to \eqref{param} ($\theta=10^{-2}$, $k_u = 10^{4}$).
	\captionsetup[figure]{labelfont=bf,textfont={it}}
	\begin{figure}[H] 	
		\begin{center}
			\includegraphics[scale=0.35]{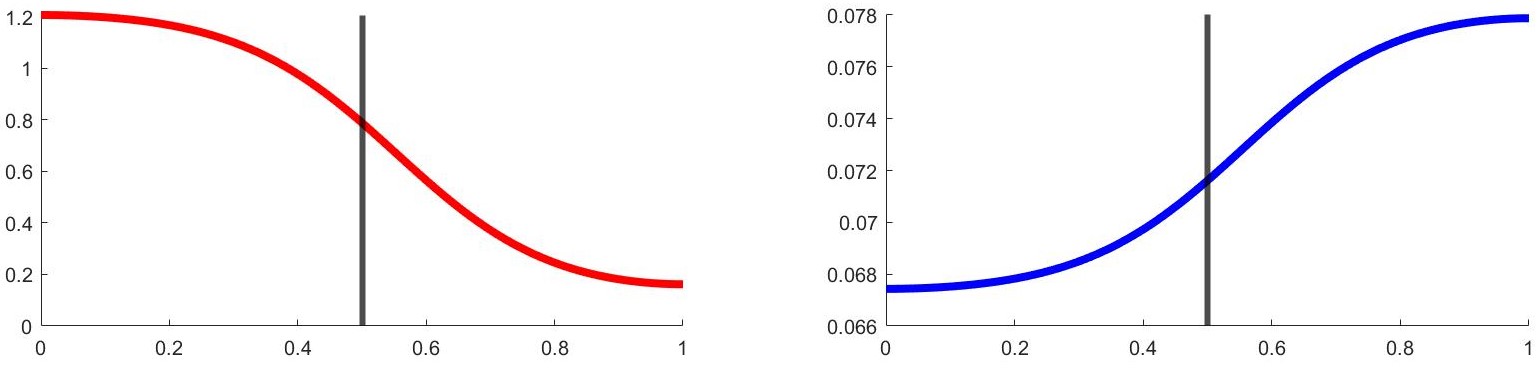}
			\caption{As built, for $k_v\!=\!+\infty$, we see the appearance of continuous patterns, since the permeability coefficients are really big. Indeed, the shape corresponds to the one seen in Figure~\ref{2k1} but, at the membrane, the jump is now reduced to zero.}
			\label{3kinf}
		\end{center}
	\end{figure}
\end{description}
\end{es}

\begin{es}\label{es2}
	We show the evolution of patterns varying $k_v\in [0,+\infty]$ and fixing $\theta$. We choose the setting of Case $3$ in Figure~\ref{3teta}. Then, we take $\theta=3\cdot 10^{-4}$ in~\eqref{param}.
    	
	\begin{description}
		\item[Case 1. ] We consider $k_v\!=\!0$ and the other parameters according to the data in \eqref{param} ($\theta\!=\!3\cdot\!10^{-4}$, $k_u= 0$). The number of eigenvalues in the unstable interval $(\eta_-,\eta_+)$ is $5$.
	\captionsetup[figure]{labelfont=bf,textfont={it}}
	\begin{figure}[H] 	
		\begin{center}
			\includegraphics[scale=0.35]{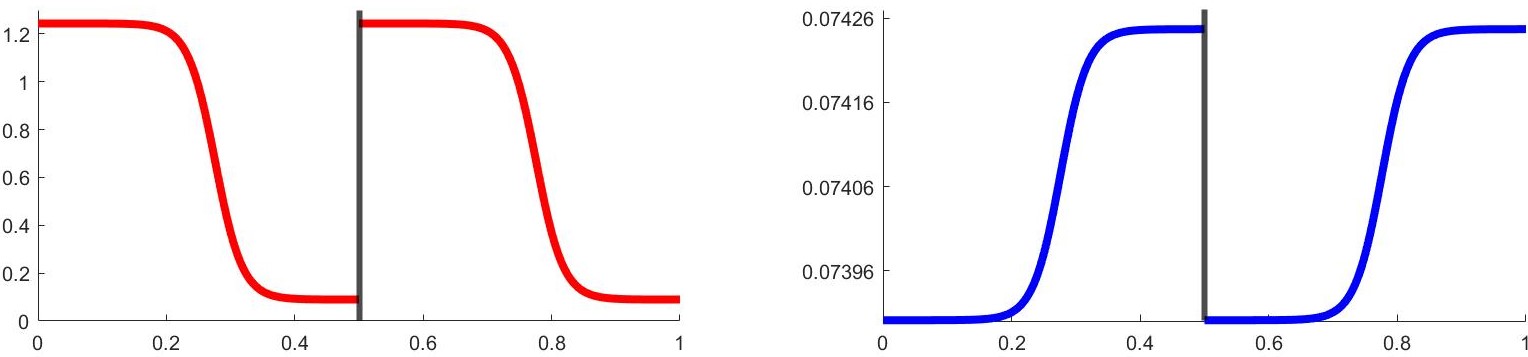}
			\caption{Choosing $k_v=0$, we clearly see patterns for $u$ and $v$. In particular, they are similar to the one observed in Figure~\ref{3teta}. A remarkable difference is at the membrane where Kedem-Katchalsky conditions are broken and they become standard homogeneous Neumann boundary conditions. }
			\label{1k02}
		\end{center}
	\end{figure}
	\item[Case 2.] We take $k_v= 10$ and the other parameters according to \eqref{param} ($\theta=3\cdot 10^{-4}$, $k_u= 3\cdot 10^{-3}$).
	\captionsetup[figure]{labelfont=bf,textfont={it}}
	\begin{figure}[H] 	
		\begin{center}
			\includegraphics[scale=0.35]{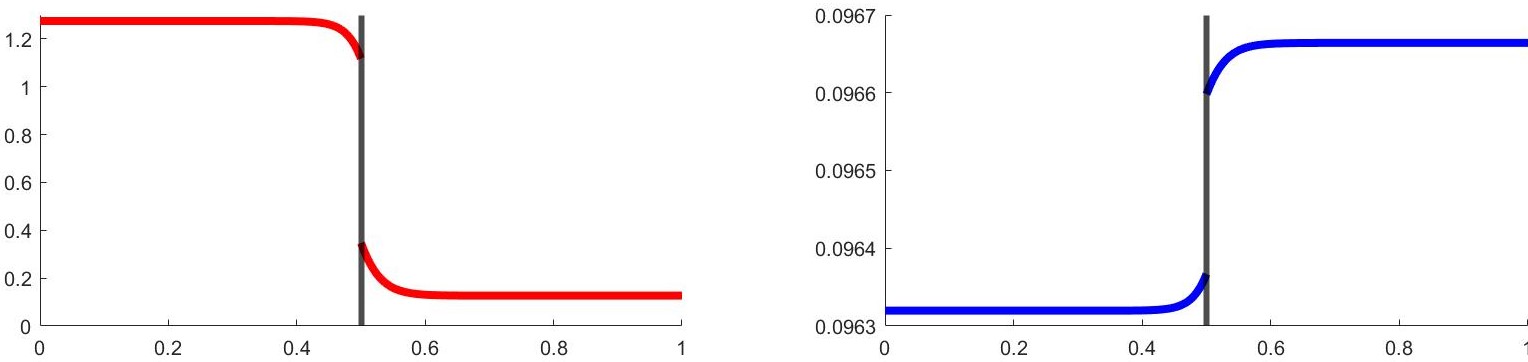}
			\caption{With $k_v=10$, solutions converge to an unexpected shape. There are $6$ unstable modes which are not enough to generate a convergence to a more complex pattern, as it could happen with only $3$ eigenvalues more in the case $\theta=10^{-4}$ (as represented in the summary Table~\ref{tabsumm} in Section~\ref{conclu}).}
			\label{2k10}
		\end{center}
	\end{figure}
	
	\item[Case 3. ] We choose $k_v= 10^{8}$ with the other data as in \eqref{param} ($\theta=3\cdot 10^{-4}$, $k_u=10^{4}$).
	\captionsetup[figure]{labelfont=bf,textfont={it}}
	\begin{figure}[H] 	
		\begin{center}
			\includegraphics[scale=0.35]{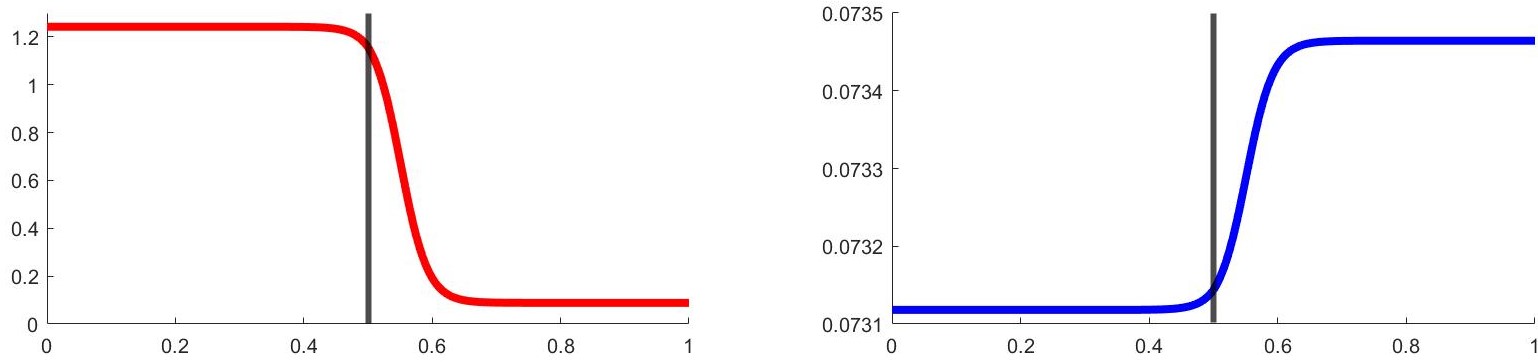}
			\caption{With $k_v$ and $k_u$ sufficiently large, the jump at the membrane (seen in Figure~\ref{2k10}) is reduced to an infinitesimal. Since, the number of unstable modes is small, the same behaviour in Figure~\ref{3kinf} is recover.}
			\label{3kinf2}
		\end{center}
	\end{figure}
\end{description}

\end{es}

To sum up, in this two examples we can observe a particular pattern behaviour, for intermediate $k_v\in(0,+\infty)$ and for a small number of unstable modes, or equivalently, $\theta$ nearby $\theta_c$, which does not occur with smooth Turing instability. Indeed, the transition from the case of two separate domain for $k_v=0$ to a unique entire one for $k_v=+\infty$ is realized through a discontinuous state, which is a nearly constant function with a jump at the membrane.


\subsection{Effect of the parameter {\texorpdfstring{$\eps$}{}} }\label{effeps}
Another interesting parameter is $\eps$, as briefly explained choosing reaction terms in Subsection~\ref{reaction}.
We remember that the smaller we take $\eps$, the faster are the reactions and the more numerous are the patterns. However, in the limit $\eps\rightarrow 0$, Turing instability for fast reaction-diffusion systems turns out to be equivalent to the instability due to backward parabolicity for the limiting cross-diffusion equations, Moussa \textit{et al.}~\cite{moussa}, Perthame and Skrzeczkowski~\cite{jakubperth}. Here, we show the changing of patterns for the solutions $u$ (left) and $v$ (right) decreasing the value of $\eps$ in different membrane scenarios. Again, we consider the data setting presented in Subsection~\ref{reaction}. In particular, we choose data in~\eqref{param} with $\theta= 10^{-4}$ and a varying $\eps$.

As previously stressed, we need to look at the instability interval $(\eta_-, \eta_+)$ in \eqref{rangefg} which increases in size as $\eps$ decreases to zero. This implies that the number of eigenvalues (given by Equation~\eqref{lambdau}) in that interval increases as $\eps$ goes to zero. Then, fixing the membrane permeability $k_v$, we expect to see more complicated shapes as $\eps\rightarrow 0$. Instead, fixing $\eps$ and varying $k_v$, we gain or lose (depending on the $\eps$ value) at most one unstable mode. This is why fixing $\eps$ patterns with different $k_u, k_v$ are comparable.
\begin{description}
	\item[Case 1. ] We consider $k_v=0$ and the other parameters according to data in \eqref{param} ($\theta= 10^{-4}$, $k_u=0$, $\eps$ varies). Indeed, we have not communicating domains in which we consider a reaction-diffusion system with reaction that is faster decreasing $\eps$.
\newpage	
	$\underline{\eps =10.}$
	
	\captionsetup[figure]{labelfont=bf,textfont={it}}
	\begin{figure}[H] 	
		\begin{center}
			\includegraphics[scale=0.35]{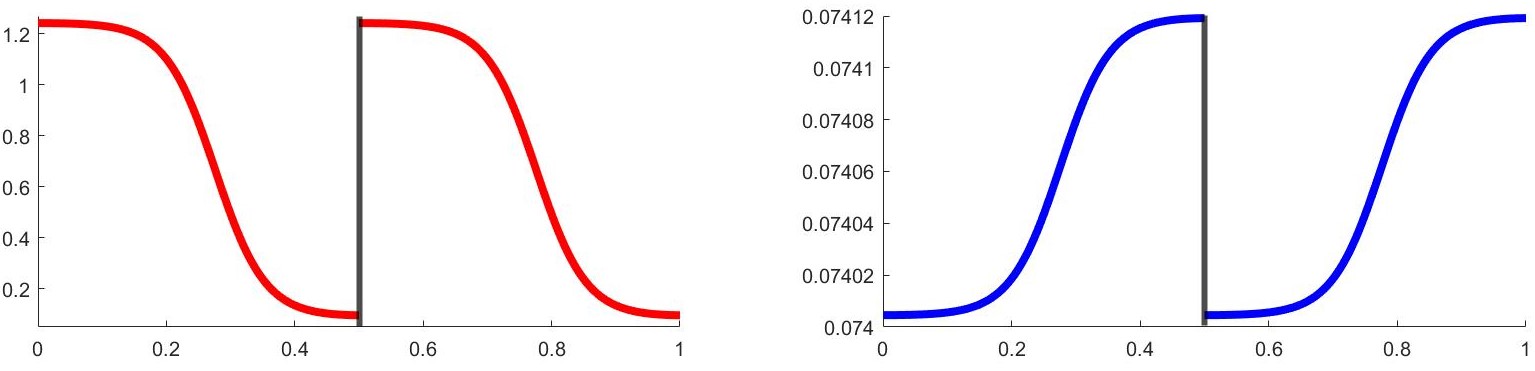}
			\caption{We represent the convergent solutions for $\eps=10$. Diffusion prevails over reaction, then solutions are smooth and we can appreciate the emergence of patterns. }
			\label{0eps10}	
		\end{center}
	\end{figure}
	
	$\underline{\eps =1.}$
	
	\captionsetup[figure]{labelfont=bf,textfont={it}}
	\begin{figure}[H] 	
		\begin{center}
			\includegraphics[scale=0.35]{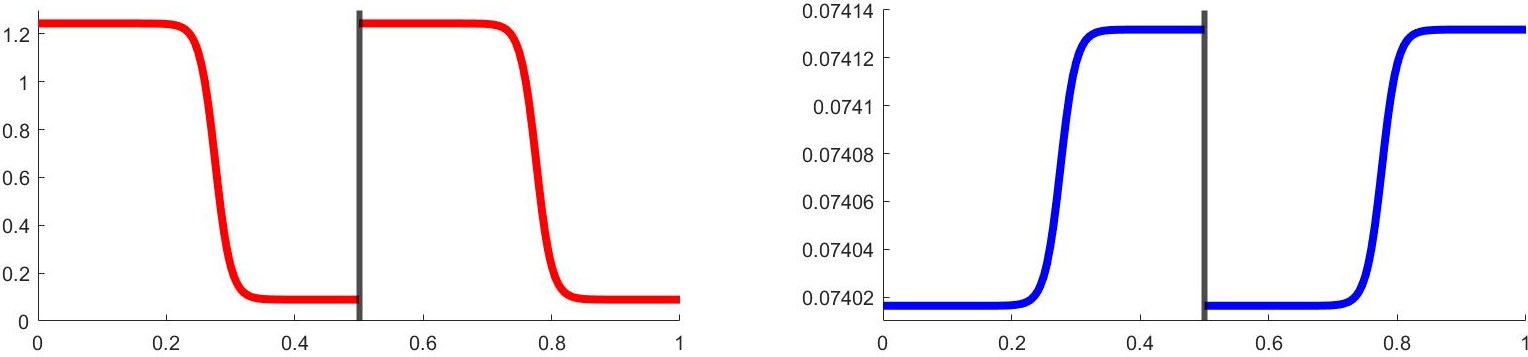}
			\caption{In the case $\eps=1$, solutions does not change significantly respect to $\eps=10$ (we have only $5$ unstable modes) but the slope is increasing. This scenario corresponds to the standard reaction-diffusion diffusion one analysed until now.}
			\label{0eps1}	
		\end{center}
	\end{figure}
	
	$\underline{\eps =1/5.}$
	
	\captionsetup[figure]{labelfont=bf,textfont={it}}
	\begin{figure}[H] 	
		\begin{center}
			\includegraphics[scale=0.35]{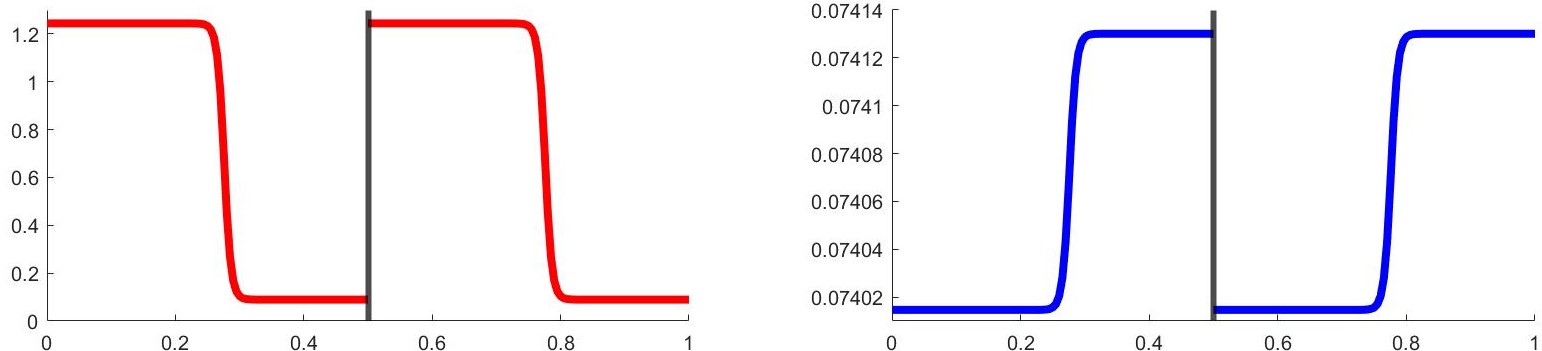}
			\caption{It is with $\eps= 1/5$ that we can see that the patterns are becoming more discontinuous, since numerically we are approaching the zero limit.}
			\label{0eps5}	
		\end{center}
	\end{figure}
\newpage	
	$\underline{\eps =1/20.}$
	
	\captionsetup[figure]{labelfont=bf,textfont={it}}
	\begin{figure}[H] 	
		\begin{center}
			\includegraphics[scale=0.35]{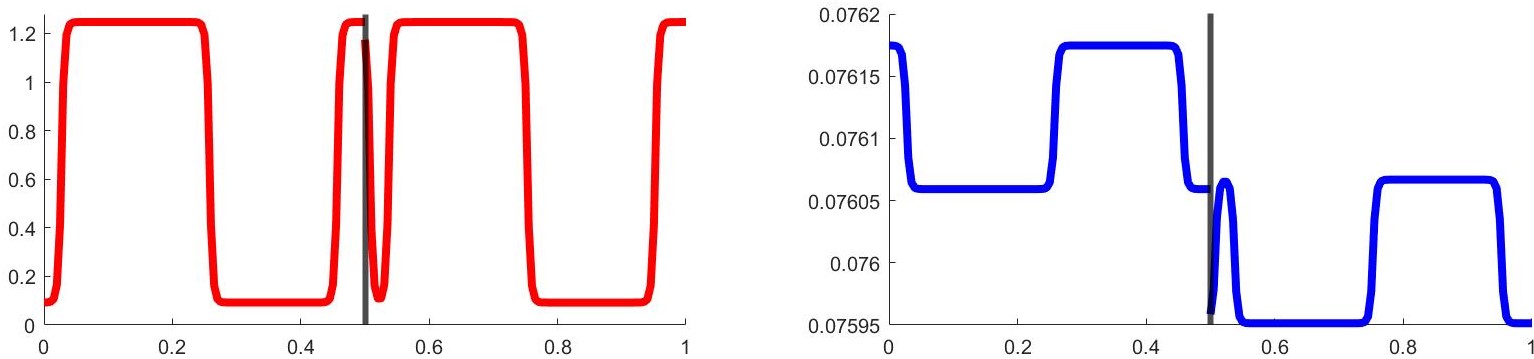}
			\caption{With $\eps= 1/20$, high frequency of oscillations are clearly appreciated. Numerically, we are converging to zero and then Turing instability is equivalent to instability and discontinuity of the ill-posedness of the backward parabolicity for the cross-diffusion system.}
			\label{0eps20}	
		\end{center}
	\end{figure}
	
	$\underline{\eps =1/100.}$
	
	\captionsetup[figure]{labelfont=bf,textfont={it}}
	\begin{figure}[H] 	
		\begin{center}
			\includegraphics[scale=0.35]{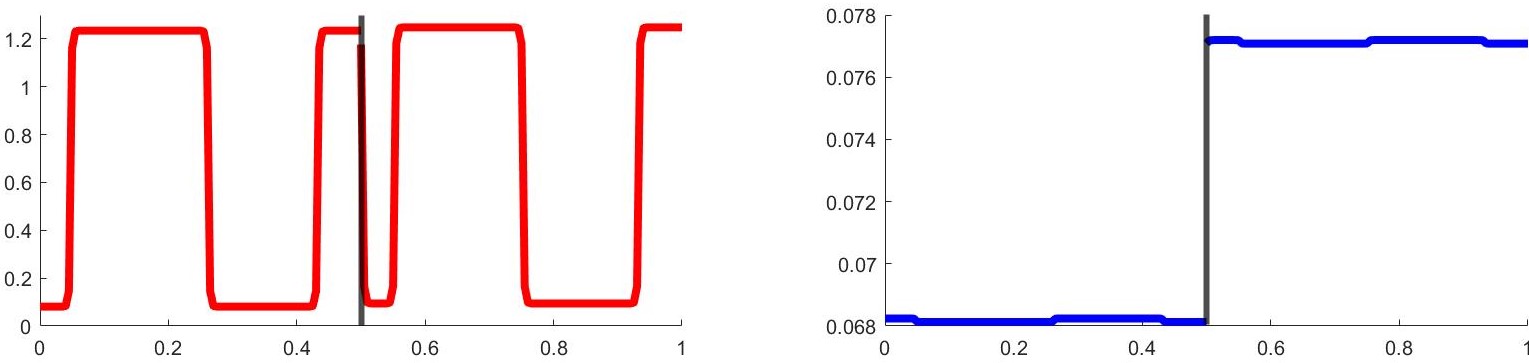}
			\caption{Discontinuities are dominant with $\eps=1/100$. The right picture representing $v$ has similar shapes has the one for $\eps=1/20$ but here the jump is more remarkable. The number of eigenvalues in the unstable range is really high and the slope in the patterns is diverging. We are far away from the smooth and regular patterns observed with slower reactions.}
			\label{0eps100}	
		\end{center}
	\end{figure}	
	
	\item[Case 2. ] We consider $k_v=1$ and the other parameters according to data in \eqref{param} ($\theta= 10^{-4}$, $k_u=10^{-4}$, $\eps$ varies). The passage through the membrane is now allowed. 
\newpage	
	$\underline{\eps =10.}$
	
	\captionsetup[figure]{labelfont=bf,textfont={it}}
	\begin{figure}[H] 	
		\begin{center}
			\includegraphics[scale=0.35]{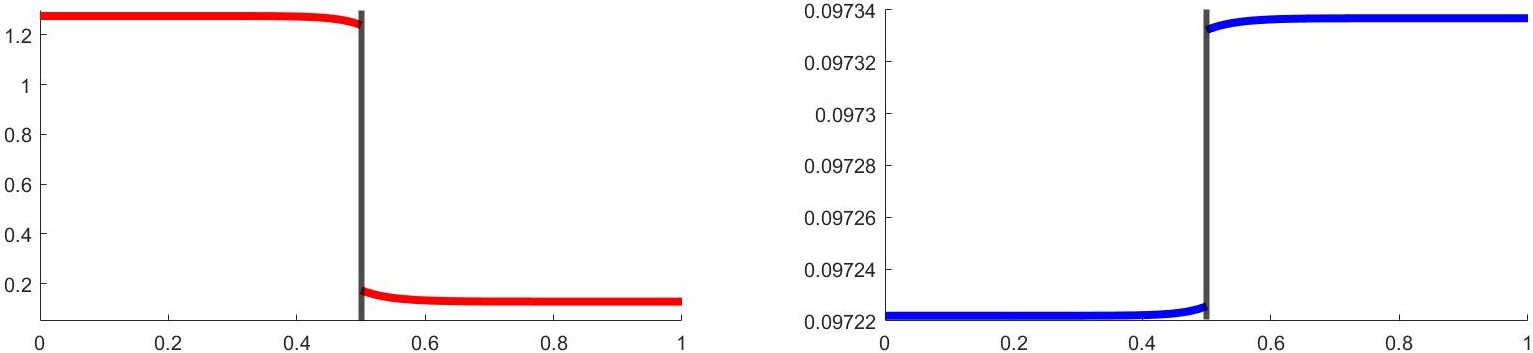}
			\caption{With $\eps=10$, the slow reaction is not prevailing significantly on the diffusion (since increasing the value of $\eps$, reactions converge to zero). The permeability of the membrane promotes dissipation but a slope nearby the interface is still observed.}
			\label{1eps10}	
		\end{center}
	\end{figure}
	
	$\underline{\eps =1.}$
	
	\captionsetup[figure]{labelfont=bf,textfont={it}}
	\begin{figure}[H] 	
		\begin{center}
			\includegraphics[scale=0.35]{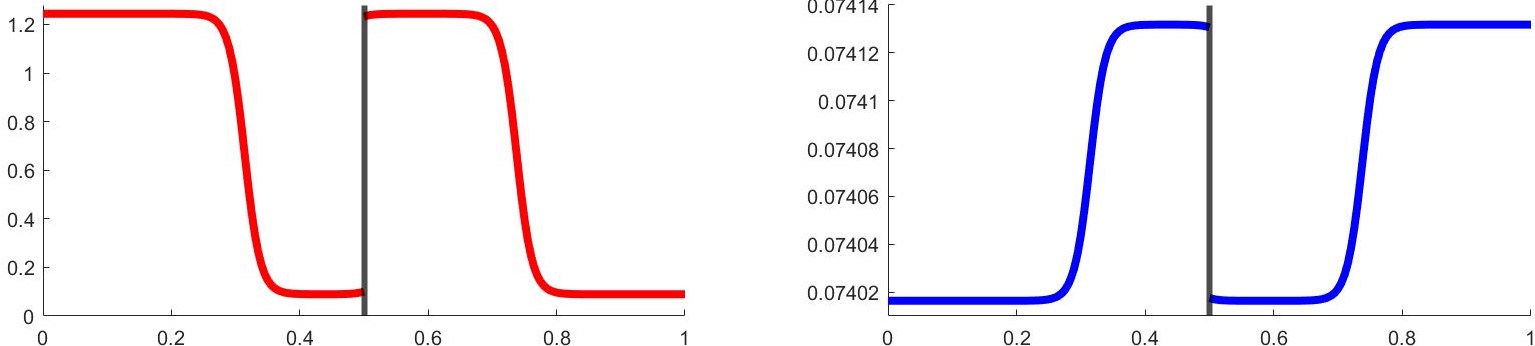}
			\caption{Coming back to a standard reaction-diffusion equation with $\eps=1$, we observe a similar shape as in the case $k_v=0$ but we can appreciate a little slope nearby the membrane.}
			\label{1eps1}	
		\end{center}
	\end{figure}
	
	$\underline{\eps =1/5.}$
	
	\captionsetup[figure]{labelfont=bf,textfont={it}}
	\begin{figure}[H] 	
		\begin{center}
			\includegraphics[scale=0.35]{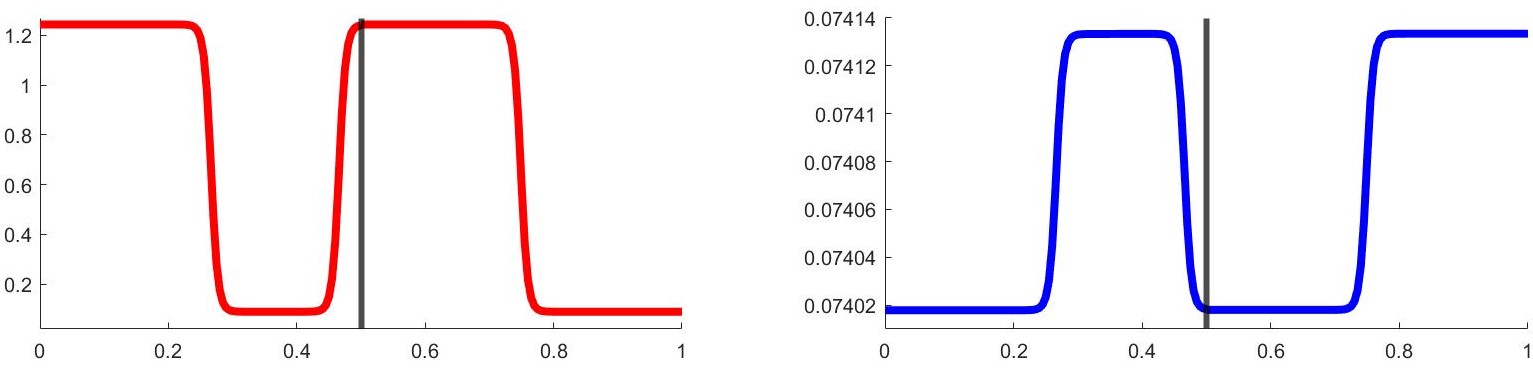}
			\caption{Reducing $\eps$, slopes increase but the jump at the membrane is less significant since membrane derivatives are really small with the data chosen. }
			\label{1eps5}	
		\end{center}
	\end{figure}
\newpage	
	$\underline{\eps =1/20.}$
	
	\captionsetup[figure]{labelfont=bf,textfont={it}}
	\begin{figure}[H] 	
		\begin{center}
			\includegraphics[scale=0.35]{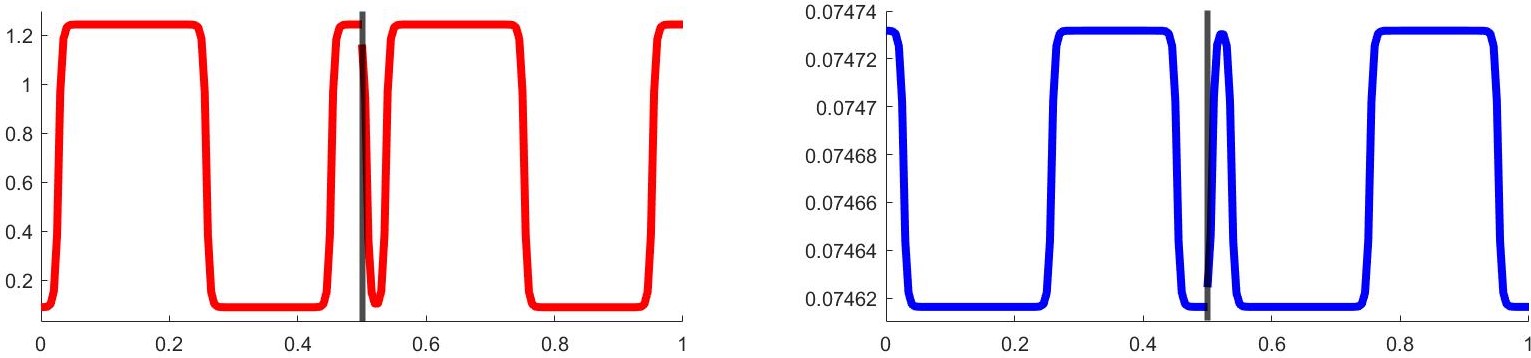}
			\caption{As in the case $k_v=0$, oscillations are increasing respect to Figure~\ref{1eps5}.}
			\label{1eps20}	
		\end{center}
	\end{figure}
	
	$\underline{\eps =1/100.}$
	
	\captionsetup[figure]{labelfont=bf,textfont={it}}
	\begin{figure}[H] 	
		\begin{center}
			\includegraphics[scale=0.35]{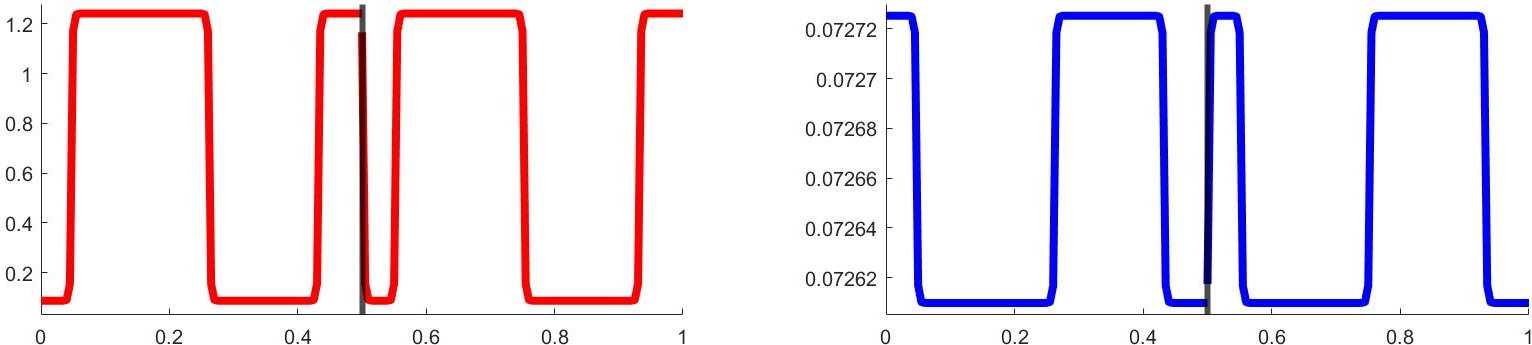}
			\caption{Taking $\eps=1/100$ and $k_v>0$, instabilities are dominant and patterns for $v$ (in the right) are more remarkable than in the case $k_v=0$, even if the shape is still unchanged.}
			\label{1eps100}	
		\end{center}
	\end{figure}
	
	\item[Case 3. ] We consider $k_v=10^8$ and the other parameters according to data in \eqref{param} ($\theta= 10^{-4}$, $k_u=10^{4}$, $\eps$ varies). We remember that the membrane is fully permeable and then we observe a reaction-diffusion system on the whole interval $[0,1]$, since membrane conditions are reduced to continuity conditions.
	
	$\underline{\eps =10.}$
	
	\captionsetup[figure]{labelfont=bf,textfont={it}}
	\begin{figure}[H] 	
		\begin{center}
			\includegraphics[scale=0.35]{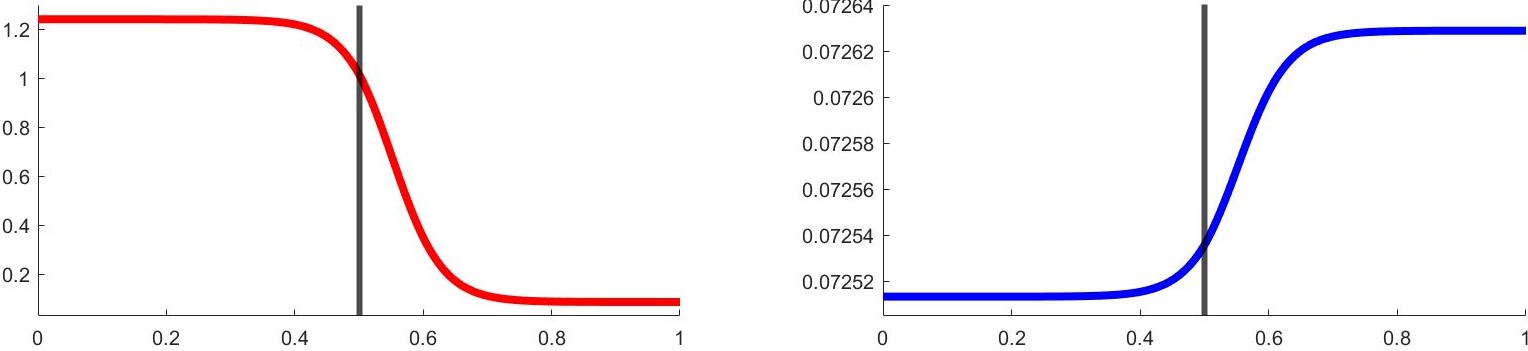}
			\caption{The jump between the right and left side solutions in Figure~\ref{1eps10} is now filled and we can observe continuous solutions. }
			\label{infeps10}	
		\end{center}
	\end{figure}
\newpage	
	$\underline{\eps =1.}$
	
	\captionsetup[figure]{labelfont=bf,textfont={it}}
	\begin{figure}[H] 	
		\begin{center}
			\includegraphics[scale=0.35]{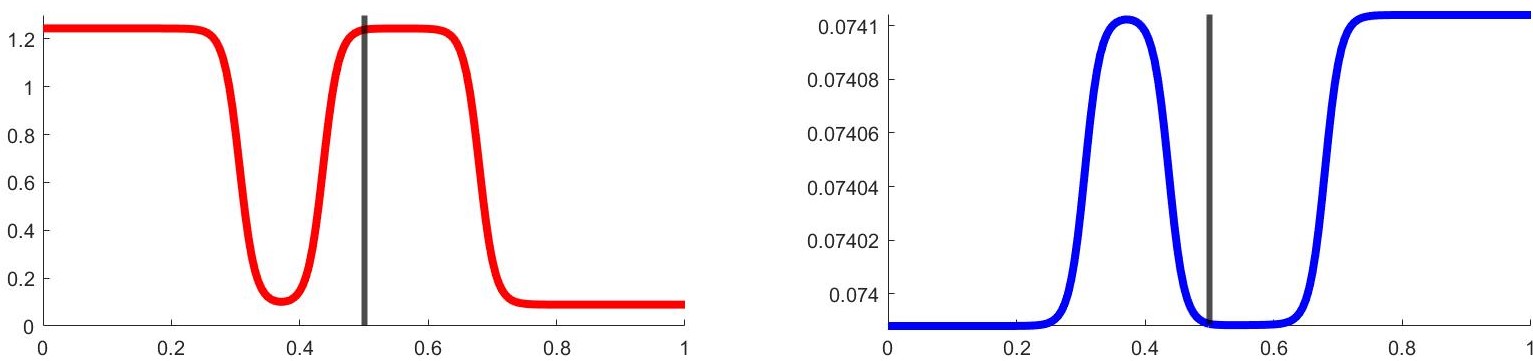}
			\caption{With $\eps=1$, the continuous solutions are similar to the following case $\eps=1/5$ but they are more regular.}
			\label{infeps1}	
		\end{center}
	\end{figure}
	
	$\underline{\eps =1/5.}$
	
	\captionsetup[figure]{labelfont=bf,textfont={it}}
	\begin{figure}[H] 	
		\begin{center}
			\includegraphics[scale=0.35]{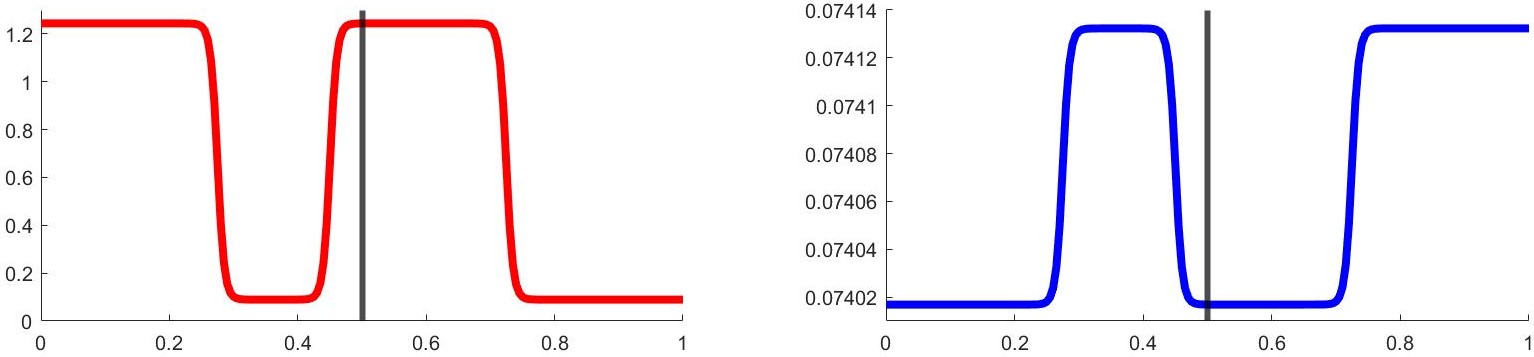}
			\caption{With $\eps=1/5$, pictures can be well predicted from Figure~\ref{1eps5}.  }
			\label{infeps5}	
		\end{center}
	\end{figure}
	
	$\underline{\eps =1/20.}$
	
	\captionsetup[figure]{labelfont=bf,textfont={it}}
	\begin{figure}[H] 	
		\begin{center}
			\includegraphics[scale=0.35]{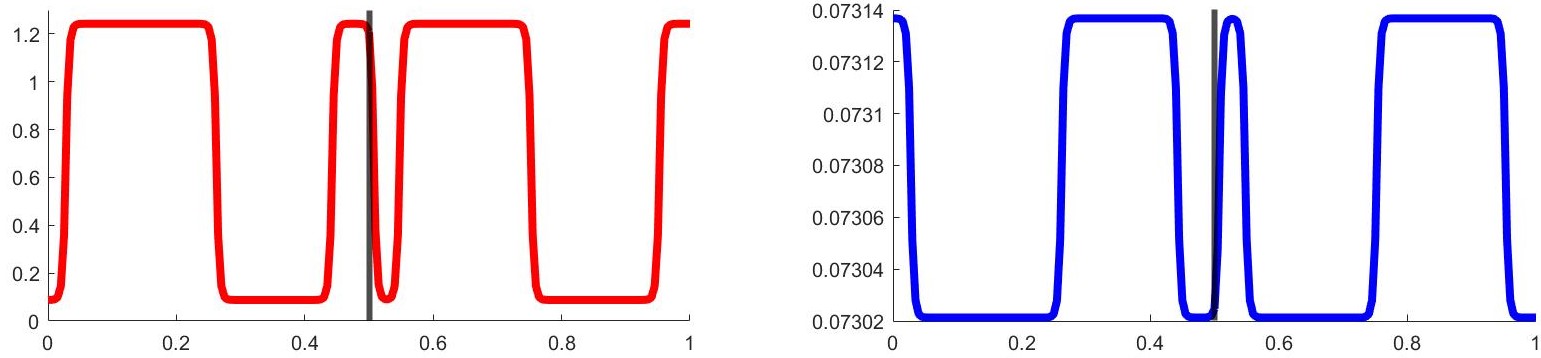}
			\caption{Again with $\eps= 1/20$, we are approaching the zero numerical limit. Then, the appearance of membrane continuous, but not smooth instabilities can be observed in both $u$ and~$v$. }
			\label{infeps20}	
		\end{center}
	\end{figure}
\newpage	
	$\underline{\eps =1/100.}$
	
	\captionsetup[figure]{labelfont=bf,textfont={it}}
	\begin{figure}[H] 	
		\begin{center}
			\includegraphics[scale=0.35]{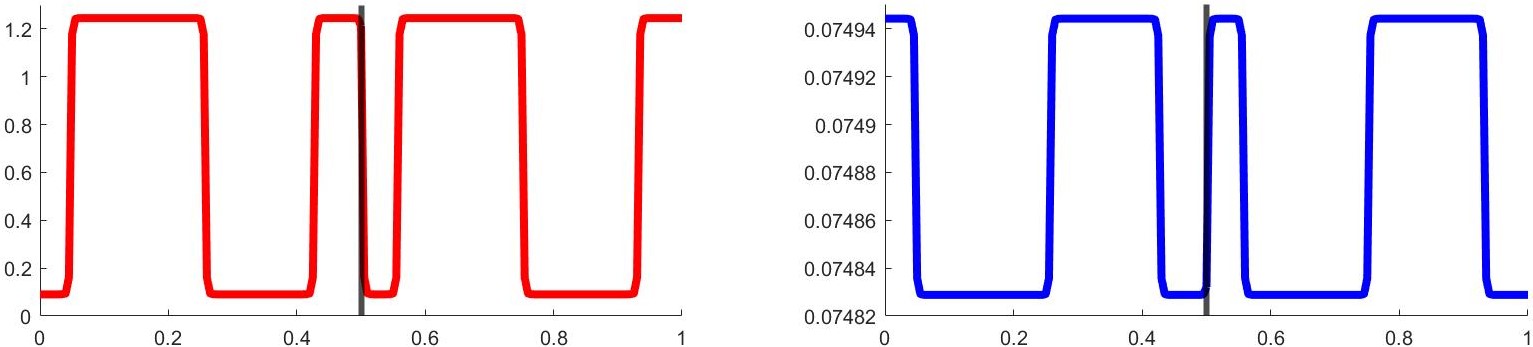}
			\caption{For $\eps=1/100$, oscillations are now continuous at the membrane respect to Figure~\ref{1eps100}}
			\label{infeps100}	
		\end{center}
	\end{figure}
	
\end{description}

Finally, as $\eps$ converges to zero, we numerically observe convergence to instability due to backward parabolicity for the limiting cross-diffusion equations. Indeed, we remark that, from a numerical point of view, the convergence to zero is already attained with $\eps=1/100$. Fixing $\eps$ and varying $k_v$, we observe similar behaviour as in the previous subsections.  

\section{Conclusions}\label{conclu}

Turing instability for a standard reaction-diffusion problem is known to be a universal mechanism for pattern formation. We questioned the effect on pattern formation of a permeable membrane at which we have dissipative conditions. This interest follows both a path started in the study of membrane problems, Ciavolella \textit{et al.}~\cite{ciadapou}, Ciavolella and Perthame~\cite{ciavper}, and their importance in biology.
Then, we have studied Turing instability from both an analytical and a numerical point of view for a reaction-diffusion membrane problem of two species $u$ and $v$ as in~\eqref{eq}.

Our method relies on a diagonalization theory for membrane operators. A detailed proof of related results in Appendix~\ref{proofdiag} is left to more analytical studies.
Thanks to this theory, in Section~\ref{turing}, we could perform an analogous analysis of Turing instability as in the standard case without membrane under the hypothesis to have equal eigenfunctions for the membrane Laplace operator associated to the two species. This condition is related, thanks to Lemma~\ref{lemmacond}, to restrictions  \eqref{nukteta} and \eqref{etalambda}.
We left as an open problem the identification of cases in which these constraints can be eliminated. 

In order to pass to the numerical analysis, we have introduced in Section~\ref{1d} the one dimensional problem and the explicit solutions of the eigenvalue problem. 
Membrane Laplace eigenvalues are implicitly defined by Equation~\eqref{lambdau}, since we have chosen to introduce the condition $\nu_D=1$. This could be avoided under biological reasons considering, then, Equation~\eqref{lambda}. Moreover, choosing a proper domain, it is possible to extend the analyses in the two-dimensional case.

Concerning numerical examples in Section~\ref{examples}, it is possible to take more complex and more realistic data. A more extensive study, with other nonlinearities, is of interest. Moreover, we have fixed the diffusion coefficient $D_v$ whose role is of interest also. 

In Table~\ref{tabsumm}, we sum up the different patterns observed in Subsection~\ref{teta}~and~\ref{diffk}, decreasing the diffusion ratio $\theta\!~=~\!\frac{D_{ul}}{D_{ur}}\!~=~\!\frac{D_{vl}}{D_{vr}}$ from the critical value $\theta_c= 3.1\cdot 10^{-1}$ (from left to right in the rows) and increasing the permeability coefficient values $k_v\in[0,+\infty]$ (from top to down in the columns). We consider only the activator $u$ and we take reaction terms as in \eqref{react}, initial data as in Figure~\ref{initdata} and data setting as in \eqref{param}. We recall that the spatial interval of study is $[0,1]$.
We stress on the fact that the first ($k_v=0$) and last ($k_v=+\infty$) row correspond to Turing instabilities observed in a reaction-diffusion problem on a half domain and on the full one respectively. Hence, it is coherent that decreasing $\theta$ the number of patterns increases in the biggest domain. 

\captionsetup[table]{labelfont=bf,textfont={it}} 
\begin{table}[H]
	\centering
	\begin{tabular}{c c c c c c}
		\toprule
		\diagbox{$k_v$}{$\theta$}  & $\theta_c$                     & $10^{-2}$                         & $10^{-3}$                         & $10^{-4}$                           & $10^{-5}$\\
		\cmidrule(r){1-1}\cmidrule(lr){2-2}\cmidrule(lr){3-3}\cmidrule(lr){4-4}\cmidrule(lr){5-5}\cmidrule(l){6-6}
		$0$             &                                & \includegraphics[scale=0.118]{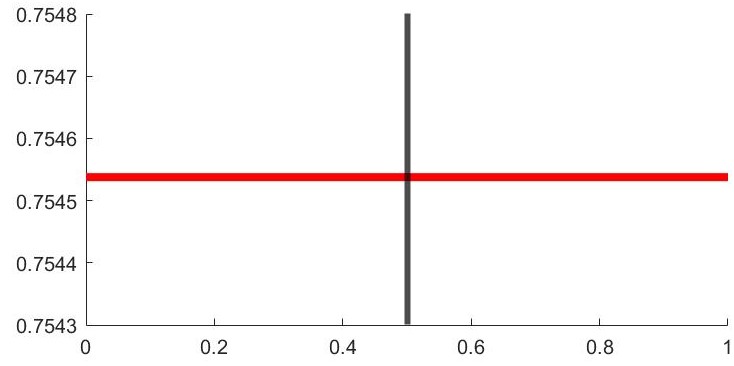} & \includegraphics[scale=0.118]{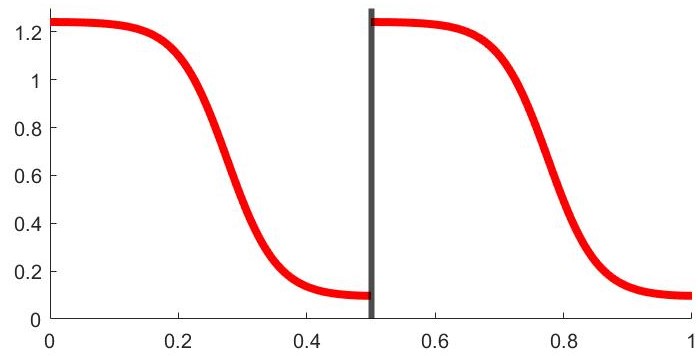} & \includegraphics[scale=0.118]{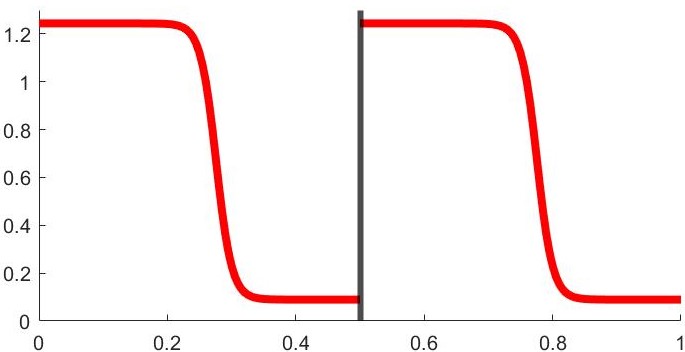}  &\includegraphics[scale=0.118]{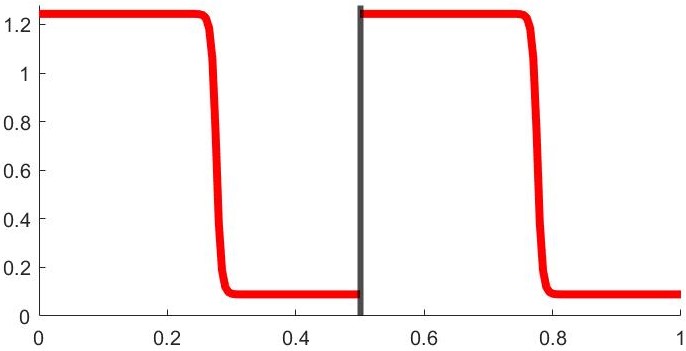} \\[1ex]
		$1 $            & \includegraphics[scale=0.118]{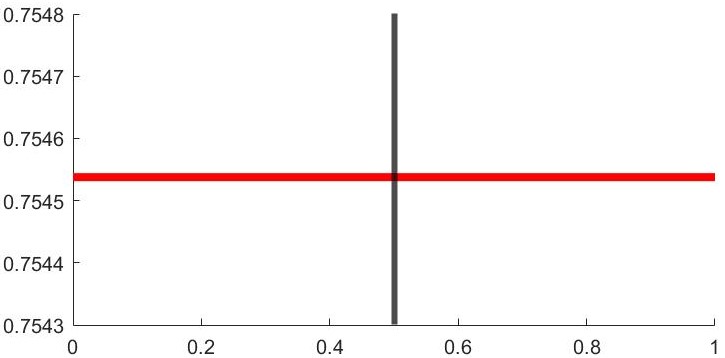} & \includegraphics[scale=0.118]{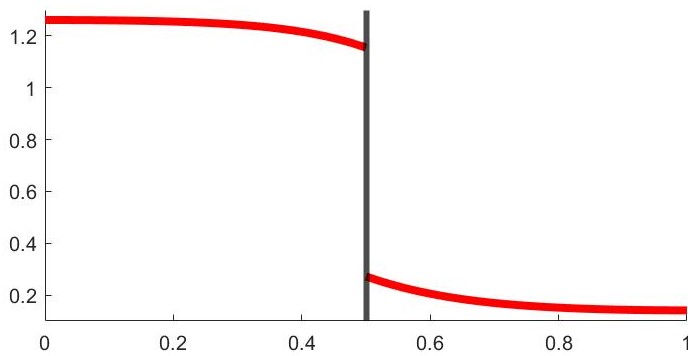} & \includegraphics[scale=0.118]{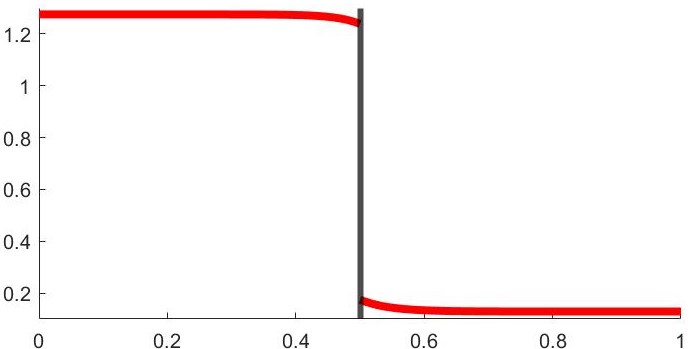} & \includegraphics[scale=0.118]{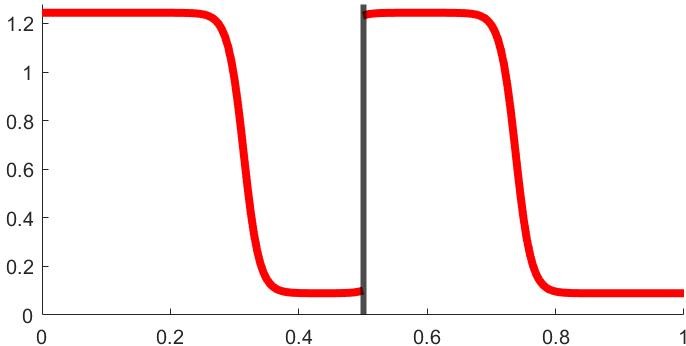}  &\includegraphics[scale=0.118]{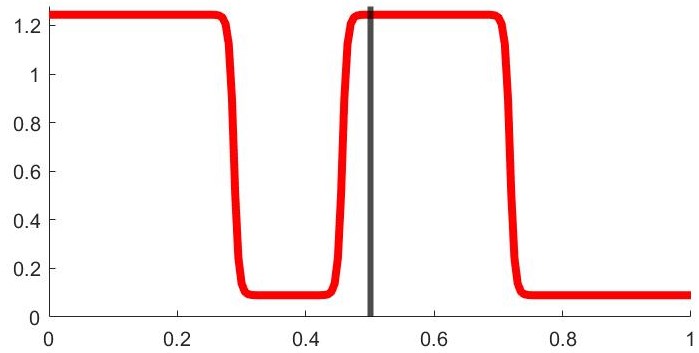}\\[1ex]
		$+\infty$       &                                & \includegraphics[scale=0.118]{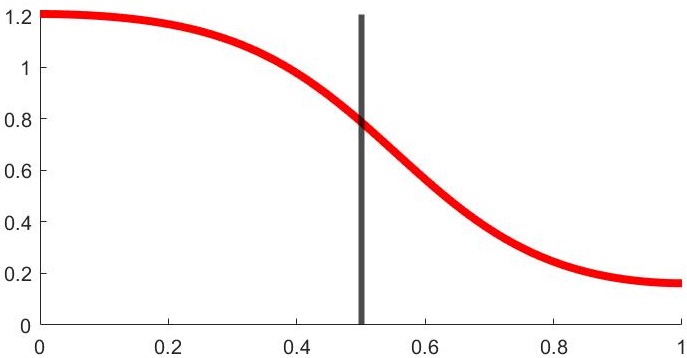} & \includegraphics[scale=0.118]{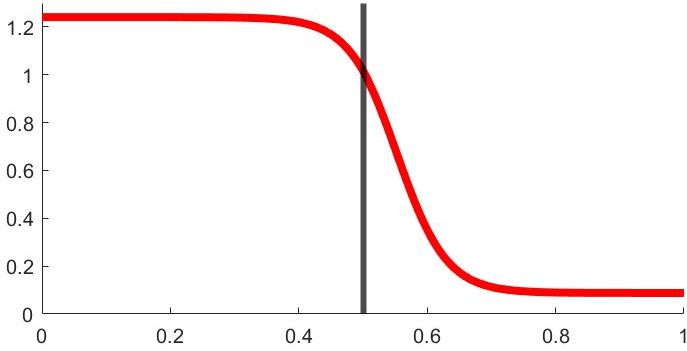} &\includegraphics[scale=0.118]{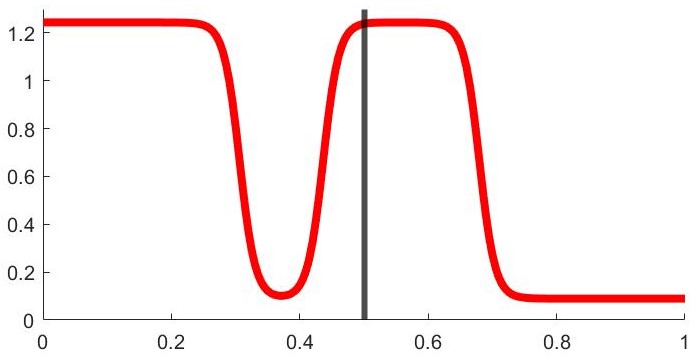}  &\includegraphics[scale=0.118]{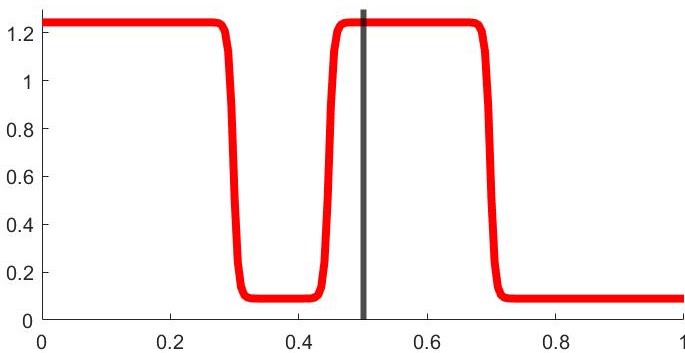}\\[1ex]
		\hline
	\end{tabular}
	\caption{We summarise the evolution of patterns varying $\theta$ and $k_v$.The first column corresponds to the value $\boldsymbol{\theta=\theta_c}$, in which case there are no unstable modes. Then, for all $k_v$, convergence to the steady state is observed. 
	For $\boldsymbol{\theta = 10^{-2}}$, we have again 
	zero eigenvalues for $k_v=0$ and one eigenvalue for $k_v\in(0,+\infty]$. So, we observe convergence respectively to a steady state and a simple pattern, discontinuous in the case $k_v=1$. 
    For $\theta$ small enough ($\boldsymbol{\theta=10^{-3}, 10^{-4}, 10^{-5}}$), we observe more complex patterns with the main discontinuity property in the case of a non-trivial $k_v$ (second row). We remark that in the picture for $\theta=10^{-5}$ and $k_v=1$, the jump is really small compared to the axis scale (see Figure~\ref{4teta}).     }
	\label{tabsumm}
\end{table}
Surprisingly, not only adding diffusion but also adding dissipative membrane conditions, we observe the equilibria
stability's break. As in the classical Turing analysis, decreasing $\theta$, we get more complex patterns. Contrary to standard Turing instability, with non-trivial membrane permeability,
discontinuity at the membrane characterizes the steady state. Moreover, for $\theta$ in a neighbourhood of $\theta_c$ and $k_v\in (0,+\infty)$, a singular pattern appears. Indeed, it is a simple nearly constant function with a jump at the membrane.

In Subsection~\ref{effeps}, we have numerically studied a fast reaction-diffusion membrane system, leaving a rigorous analysis as an open problem. Again, discontinuity characterizes instability for $k_v\in(0,+\infty)$.
  
\section*{Acknowledgements}

The author has received funding from the European Research Council (ERC) under the European Union's Horizon 2020 research and innovation programme (grant agreement No 740623). The work was also partially supported by GNAMPA-INdAM.

\appendix

\section{Diagonalization theory on membrane operators}\label{proofdiag}

We introduce the diagonalization result, Brezis~\cite{brezis}, Evans~\cite{evans}, for membrane operators which assures the existence of a sequence of eigenvalues and eigenfunctions that solve each problem in \eqref{eigenw} and \eqref{eigenz}. 
\begin{thm}[Diagonalization theorem for compact, self-adjoint membrane operators.]\label{diagon}
	Let $A$ be a compact, self-adjoint membrane operator on a separable Hilbert space $\mathcal{H}$ with infinite dimension. There exists a sequence of real numbers $\{\lambda_n\}_{n\in \N}$ such that $\{|\lambda_n|\}_{n\in \N}$	is non increasing, converges to zero and such that:
	\begin{itemize}
		\item for any $n$ such that $\lambda_n$ is non-zero, $\lambda_n$ is an eigenvalue of $A$ and $E_n:= \ker(A-\lambda_n I)$ is a subspace of $\mathcal{H}$ with finite dimension; moreover, if $\lambda_n$ and $\lambda_m$ are distinct, their corresponding eigenspaces are orthogonal;
		\item if $E:= \mbox{Span}\hspace{-9pt} \bigcup\limits_{\tiny\begin{array}{cc}
				n\in\N\\
				\lambda_n\neq 0
		\end{array}}\hspace{-9pt} E_n$, then $\ker(A)=E^\perp$;
	\end{itemize} 
\end{thm}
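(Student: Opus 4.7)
The plan is to follow the classical proof of the spectral theorem for compact self-adjoint operators on a Hilbert space, as in Brezis or Evans; the ``membrane'' nature of $A$ enters only through the fact that compactness and self-adjointness are already assumed. Throughout, I write $\langle\cdot,\cdot\rangle$ for the inner product on $\mathcal{H}$ and $\|\cdot\|$ for its norm.

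The first step is the variational existence of a ``top'' eigenvalue. The key self-adjoint identity $\|A\| = \sup_{\|x\|=1} |\langle Ax,x\rangle|$ produces a maximizing sequence $(x_n)$ with $\|x_n\|=1$ and $\langle Ax_n,x_n\rangle \to \lambda_1$, where $|\lambda_1|=\|A\|$. Extracting a weakly convergent subsequence $x_n\rightharpoonup x$, compactness of $A$ upgrades this to $Ax_n\to Ax$ in norm; a short computation using the identity $\|Ax_n-\lambda_1 x_n\|^2 = \|Ax_n\|^2 - 2\lambda_1\langle Ax_n,x_n\rangle + \lambda_1^2$ then yields $Ax=\lambda_1 x$ with $\|x\|=1$, provided $\lambda_1\neq 0$. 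If $\|A\|=0$ we are in the trivial case $A=0$ and there is nothing to prove.

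The second step is an exhaustion argument. Set $E_1:=\ker(A-\lambda_1 I)$. Since $A$ is self-adjoint, $E_1^\perp$ is invariant under $A$, and $A\bigl|_{E_1^\perp}$ is again compact and self-adjoint; the previous step applied to this restriction yields $\lambda_2$ with $|\lambda_2|\leq|\lambda_1|$, and iterating produces the required sequence $\{\lambda_n\}_{n\in\N}$ with nonincreasing moduli. Three structural facts then follow from standard arguments. Each nonzero eigenspace $E_n$ is finite-dimensional: otherwise an orthonormal sequence $(e_k)\subset E_n$ would satisfy $\|Ae_k-Ae_j\|^2=2\lambda_n^2$ for $k\neq j$, contradicting compactness of $A$. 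The sequence $|\lambda_n|$ tends to zero: the same argument applied to one unit eigenvector per $E_n$ produces an orthonormal sequence whose image under $A$ has no convergent subsequence unless $|\lambda_n|\to 0$. Eigenspaces for distinct eigenvalues are mutually orthogonal by the usual identity $\lambda_n\langle u,v\rangle=\langle Au,v\rangle=\langle u,Av\rangle=\lambda_m\langle u,v\rangle$.

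Finally, to identify $\ker(A)=E^\perp$, note that $E^\perp$ is closed and $A$-invariant, hence $A\bigl|_{E^\perp}$ is a compact self-adjoint operator. If its operator norm were positive, the first step applied to this restriction would deliver a new nonzero eigenvalue for $A$, contradicting the fact that the exhaustion has already captured every nonzero eigenvalue. Hence $A\bigl|_{E^\perp}=0$, giving $E^\perp\subset \ker(A)$; the reverse inclusion is immediate since any eigenvector with nonzero eigenvalue is orthogonal to $\ker(A)$ by self-adjointness, so $\ker(A)\subset E^\perp$. The main obstacle is the opening eigenvalue existence step, where one must combine weak compactness of the unit ball with strong compactness of $A$ and the self-adjoint characterization of the norm; once that variational input is in hand, the remainder is a clean induction driven by orthogonal-complement invariance.
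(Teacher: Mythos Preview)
Your argument is correct: it is the classical variational/exhaustion proof of the spectral theorem for compact self-adjoint operators, exactly as in Brezis or Evans. Note that the paper does not actually prove Theorem~\ref{diagon}; it states it as a standard result (citing Brezis~\cite{brezis}, Evans~\cite{evans}) and then devotes Appendix~\ref{proofdiag} to verifying that the inverse membrane operators $L^{-1}$ and $\widetilde{L}^{-1}$ satisfy its hypotheses (continuity and coercivity of the bilinear forms via a membrane Poincar\'e--Wirtinger inequality, Lax--Milgram, Rellich--Kondrachov for compactness, and self-adjointness). Your write-up therefore supplies precisely the proof the paper defers to the literature, and is consistent with the references the author invokes.
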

\noindent Indeed, \hypertarget{thmdiag}{Theorem}~\ref{diagon} applies to the inverse operators $L^{-1}$ and $\widetilde{L}^{-1}$. Therefore, we can find also for $L$ and $\widetilde{L}$ a sequence of eigenvalues and a basis of eigenfunctions.

We show here below that the inverse operators verify the hypothesis of this theorem. At first, we introduce the bilinear forms associated to the membrane operators. Then, we prove the hypothesis of the Lax-Milgram Theorem. The following definition is requested.

\begin{defin}\label{defh1}
We define the Hilbert space of functions ${\bf H^1} = H^1(\Omega_l)\times H^1(\Omega_r)$.
We endow it with the norm 
$$\|w\|_{\bf H^1}= \left(\|w^1\|^2_{H^1(\Omega_l)} +\|w^2\|^2_{H^1(\Omega_r)}\right)^\frac{1}{2}.$$
We let $(\cdot,\cdot)_{\bf H^1}$ be the inner product in ${\bf H^1}$.
\end{defin} 

We define the bilinear forms associated with these membrane elliptic operators as 
\begin{equation}
\begin{array}{ll}
B[\varphi,\phi]=\int_{\Omega_l} D_{ul} \nabla \varphi_l \nabla \phi_l+\int_{\Omega_r} D_{ur} \nabla \varphi_r \nabla \phi_r +\int_{\Gamma} D_u k_u (\varphi_r-\varphi_l) (\phi_r-\phi_l),\\[2ex]
\widetilde{B}[\varphi,\phi]=\int_{\Omega_l} D_{vl} \nabla \varphi_l \nabla \phi_l+\int_{\Omega_r} D_{vr} \nabla \varphi_r \nabla \phi_r +\int_{\Gamma} D_v k_v (\varphi_r-\varphi_l) (\phi_r-\phi_l),
\end{array}  
\end{equation}
for $\varphi, \phi \in {\bf H^1}$. We remark that $B$ and $\widetilde{B}$ are symmetric.
For simplicity, we consider the membrane operator $L$. We can follow the same steps for $\widetilde{L}$.
We want to apply the Lax-Milgram theory, Brezis~\cite{brezis}, Evans~\cite{evans}.   
We can readily check continuity and coercivity for $B$. \\
{\it $B$ is continuous.} Thanks to the Cauchy-Schwarz inequality and the continuity of the trace, we can write
\begin{align*}
|B[\varphi,\phi]|&\leq \sum_{\lambda=l,r} ( \; D_{u\lambda}\nl{\nabla \varphi_\lambda}{\Omega_\lambda} \nl{\nabla \phi_\lambda}{\Omega_\lambda} +D_{u\lambda} k_u \nl{[\varphi]}{\Gamma} \nl{[\phi]}{\Gamma}\; )\\ &\leq \sum_{\lambda,\sigma=l,r}\left(\; \nh{\varphi_\lambda}{\Omega_\lambda}\nh{\phi_\lambda}{\Omega_\lambda}+D_{u\lambda} k_u \nh{\varphi_\lambda}{\Omega_\lambda} \nh{\phi_\sigma}{\Omega_\sigma}\; \right)\\
&\leq C\|\varphi\|_{\bf H^1}\|\phi\|_{\bf H^1},
\end{align*}\\	
{\it $B$ is coercive.} Indeed, if we assume \hypertarget{mean}{$\int_{\Omega_l \cup \Omega_r} \varphi =0$}, we can estimate
$$
B[\varphi,\varphi] = \int_{\Omega_l} |\nabla \varphi|^2 + \int_{\Omega_r} |\nabla \varphi|^2+ \int_\Gamma k_i |\varphi_r-\varphi_l|^2 \geq C \|\varphi\|_{\bf H^1}^2
$$
with a membrane version of the Poincaré-Wirtinger inequality on a product space (this theory would not be analysed in this chapter since it is more a functional analysis result which is not of main interest in Turing theory).
With the same assumption, we can check continuity and coercivity of $\widetilde{B}$. 
Therefore, the Lax-Milgram theory applies in this context assuming that $\int_{\Omega_l \cup \Omega_r} w =0$. Then there exists a unique function $w\in {\bf H^1}$ solving 
\begin{equation}\label{lm}
B[w,\varphi]=(\lambda w, \varphi)_{\bf H^1}, \quad \forall \varphi\in {\bf H^1}.
\end{equation}
Whenever \eqref{lm} holds, we write 
\begin{equation*}
w=\lambda L^{-1}w.
\end{equation*}
The inverse operator $L^{-1}: (\mathbf{H^1})^{-1} \rightarrow \mathbf{H^1}$ is a compact operator in $L^2(\Omega_l)\times L^2(\Omega_r)$, since according to the Rellich-Kondrachov theorem $\mathbf{H^1}\subset\subset {\bf L^2}$. Moreover, it is also a self-adjoint one, Taylor~\cite{taylor}. Indeed, the operators $L$ and $\widetilde{L}$ are self-adjoints (we can prove it, since they are maximal monotone symmetric operators, Brezis~\cite{brezis}, Serafini~\cite{serafini}).

The standard spectral theory for compact and self-adjoint operators seen in Theorem~\ref{diagon} applies in this context. We deduce that there exists a sequence of real number $\{\sigma_n \}_{n\in \N}$ such that $\{|\sigma_n|\}_{n\in \N}$ is non increasing and converging to zero. Moreover, if $\sigma_n$ and $\sigma_m$ are distinct, their corresponding eigenspaces are orthogonal. We call $\{w_{_{n}}\}_{n\in\N}$ the basis of eigenfunctions of $L^{-1}$.
So, we infer that $L$ has an orthonormal basis of $L^2(\Omega_l)\cup L^2(\Omega_r)$ of eigenfunctions $\{w_{_{n}}\}_{n\in\N}$ related to a sequence of increasing and diverging eigenvalues $\{\lambda_{_{n}}\}_{n\in\N}$ such that $\lambda_{_{n}} = \frac{1}{\sigma_n}$, for all $n\in\N$.
\begin{oss}
	The \hyperlink{mean}{mean zero property} can be interpreted as if we are taking the eigenfunctions in the orthogonal space of the constants. In fact, the existence of a sequence of eigenvalues and of orthogonal eigenfunctions in the diagonalization theorem can be proven through a minimisation process starting from the first zero eigenvalue and looking for the eigenspaces as the orthogonal spaces of its eigenfunction which is a constant.
\end{oss}

\section{Numerical method}\label{Tetameth}

We illustrate the one-dimension numerical method, Morton and Mayers~\cite{morton}, Quarteroni \textit{et al.}~\cite{qss}, used to perform the examples in Section~\ref{examples}.  
We present the discretization on the interval $I=(a,x_m)\cup (x_m,b)=:I_l \cup I_r$ of the one-dimension reaction-diffusion System~\eqref{eq}.

In the following, for simplicity, we write the numerical expressions for the equations of $u$, but with the same steps we can obtain the discretization also for $v$.
We consider a space discretization (see Almeida \textit{et al.}~\cite{almeida}) of each subdomain $I_l$ and $I_r$ in $N_l+1$ and $N_r+1$ points respectively. We observe
that this distinction allows to consider not centred membranes. In our case with the membrane in the middle point $x_m$, we infer that $N_l=N_r$. Concerning the membrane, the key aspect is to discretize this point as two distinct ones since the Kedem-Katchalsky conditions are constructed defining the right and left limit of the
density on the membrane (see Ciavolella and
Perthame~\cite{ciavper}). Moreover, the space step turns out to be $\Delta x = \frac{x_m-a}{N_l+1}= \frac{b-x_m}{N_r+1}$, with $N_l, N_r\in \N$. The mesh is formed by the intervals
$$I_i= \left(x_{i-\frac 1 2}, x_{i+\frac 1 2}\right), \; i=1,...,N_l+1, \qquad J_j= \left(x_{j-\frac 1 2}, x_{j+\frac 1 2}\right), \; j=1,...,N_r+1.$$
The intervals are centred in $x_i= i \Delta x$, $i=1,...,N_l+1$ and $x_j= j \Delta x$, $j=1,...,N_r+1$ with $I_{N_l+1}=J_1$. Moreover, as the reader can remark, we add ghost points to build the extremal intervals in the left $I_1, I_{N_l+1}$ and in the right $J_1, J_{N_r+1}$. Then, we consider the ghost points for $i=0, N_{l+2}$ and $j=0, N_{r+2}$.
At a given time, the spatial discretization of $u(t,x)$, interpreted in the finite volume sense, is of the form
\[u_i(t) \approx \frac{1}{\Delta x} \int_{I_i} u_l(t,x) \, dx, \qquad \h{u}_j(t) \approx \frac{1}{\Delta x} \int_{J_j} u_r(t,x) \, dx,\] 
for $i=1,...,N_l+1$ and $j=1,...,N_r+1$.
Concerning the time discretization, we consider the time step $\Delta t$ such that the mesh points are of the form $t^n = N_t \Delta t$, with $N_t\in \N$. The discrete approximation of $u(t,x)$, for $n\in \N$, $i=1,...,N_l+1$ and $j=1,...,N_r+1$, is now 
\[
u_i^n \approx \frac{1}{\Delta x} \int_{I_i} u_l(t^n,x) \, dx, \qquad \h{u}_j^n \approx \frac{1}{\Delta x} \int_{J_j} u_r(t^n,x) \, dx.
\]
We write the time discretization as an Euler method and the space one with a generic $\Theta$-method. In the simulations, we have chosen $\Theta=1$, meaning that the method is an implicit and always stable one. For the sake of simplicity, we consider a unique index $i$ instead of $i,j$. In the following, we take $0\leq n\leq N_t$ and we call $\delta_x^2 u^n_i= u^n_{i-1}-2u^n_i+u^n_{i+1}.$
Then, we obtain 
\begin{equation*}
		u^{n+1}_i-u^n_i=\mu_l[\Theta \, \delta_x^2 u^{n+1}_i + (1-\Theta) \delta_x^2 u^n_i] + \Delta t f^n_i, \quad \mbox{ for } i=1,...,N_l+1,
\end{equation*} 
with $\mu_l =\frac{D_{ul} \Delta t}{\Delta x ^2}$ and
\begin{equation*}
	\h{u}^{n+1}_i-\h{u}^n_i=\mu_r[\Theta \, \delta_x^2 \h{u}^{n+1}_i + (1-\Theta) \delta_x^2 \h{u}^n_i]+\Delta t \h{f}^n_i, \quad \mbox{ for } i=1,...,N_r+1,	
\end{equation*} 
with $\mu_r =\frac{D_{ur} \Delta t}{\Delta x ^2}$. Finally, we deduce the systems\\[1ex]
for $i=1,...,N_l+1$,
\begin{equation}\label{eqnum1}
	\begin{split}
		-\mu_l \Theta u_{i-1}^{n+1}&+ (1+2\mu_l\Theta) u_i^{n+1} -\mu_l \Theta u_{i+1}^{n+1}\\& = \mu_l(1-\Theta) u^n_{i-1}+(1-2\mu_l(1-\Theta)) u_i^{n}+\mu_l (1-\Theta) u_{i+1}^{n}+ \Delta t f^n_i
	\end{split}
\end{equation}
for $i=1,...,N_r+1$,
\begin{equation}\label{eqnum2}
	\begin{split}
		-\mu_r \Theta \h{u}_{i-1}^{n+1}&+ (1+2\mu_r\Theta) \h{u}_i^{n+1} -\mu_r \Theta \h{u}_{i+1}^{n+1} \\
		&= \mu_r(1-\Theta) \h{u}^n_{i-1}+(1-2\mu_r(1-\Theta)) \h{u}_i^{n}+\mu_r (1-\Theta) \h{u}_{i+1}^{n}+\Delta t \h{f}^n_i
	\end{split}
\end{equation}

Now, we exhibit the first order discretization of the boundary conditions. Starting from Neumann, we can distinguish the condition in $a$ and $b$ as
\begin{equation}\label{N}
	u_0^{n+1}=u_1^{n+1}, \qquad \h{u}_{N_r+2}^{n+1}=\h{u}_{N_r+1}^{n+1},
\end{equation} 
which give the relation of the extremal ghost points.
From the Kedem-Katchalsky membrane conditions, we deduce the expression of the membrane ghost points 
\begin{equation}\label{KK}
	u_{N_l+2}^{n+1}= u_{N_l+1}^{n+1}+\frac{\Delta x \, k_u}{D_{ul}}\, (\h{u}_1^{n+1}-u_{N_l+1}^{n+1}), \qquad \h{u}_{0}^{n+1}= u_{1}^{n+1}-\frac{\Delta x \, k_u}{D_{ur}}\, (\h{u}_1^{n+1}-u_{N_l+1}^{n+1}).
\end{equation}
Substituting the ghost values found in \eqref{N} and \eqref{KK} in the systems \eqref{eqnum1} and \eqref{eqnum2}, we get the equations at the extremal points:
\begin{description}
	\item[At the left limit on the membrane, ] 
	$$-\mu_l \Theta u_{N_l}^{n+1} +\left(1+\mu_l\Theta +\Theta \frac{\Delta t \, k_u}{\Delta x}\right) +u_{N_l+1}^{n+1}- \Theta \frac{\Delta t \, k_u}{\Delta x} \h{u}_{1}^{n+1}$$
	\begin{equation}\label{membrleft}
		= \mu_l (1-\Theta) u_{N_l}^{n} +\left(1-\mu_l(1-\Theta) -(1-\Theta) \frac{\Delta t \, k_u}{\Delta x}\right) +u_{N_l+1}^{n}+ (1-\Theta) \frac{\Delta t \, k_u}{\Delta x} \h{u}_{1}^{n}.
	\end{equation}
    \item[At the right limit on the membrane, ] 
    $$-\Theta \frac{\Delta t \, k_u}{\Delta x}\, u_{N_l+1}^{n+1} +\left(1+\mu_r\Theta +\Theta \frac{\Delta t \, k_u}{\Delta x}\right) \h{u}_{1}^{n+1}- \mu_r \Theta   \h{u}_{2}^{n+1}$$
    \begin{equation}\label{membrright}
    	= (1-\Theta) \frac{\Delta t \, k_u}{\Delta x}\, u_{N_l+1}^{n} +\left(1-\mu_r(1-\Theta) -(1-\Theta) \frac{\Delta t \, k_u}{\Delta x}\right) \h{u}_{1}^{n}+ \mu_r (1-\Theta)   \h{u}_{2}^{n}.
    \end{equation}
    \item[In $\mathbf{a}$,]
    \begin{equation}\label{a}
    	(1+\mu_l\Theta) u_1^{n+1} -\mu_l\Theta u_2^{n+1} = (1-\mu_l(1-\Theta)) u_1^n +\mu_l(1-\Theta) u_2^n.
    \end{equation}
    \item[In $\mathbf{b}$,]
    \begin{equation}\label{b}
    	-\mu_r\Theta \h{u}_{N_r}^{n+1}+ (1+\mu_r\Theta) \h{u}_{N_r+1}^{n+1} = \mu_r (1-\Theta) \h{u}_{N_r}^n + (1-\mu_r (1-\Theta)) \h{u}_{N_r+1}^n.
    \end{equation} 
\end{description}
To conclude, system \eqref{eqnum1} for $i=1,...,N_l$ and \eqref{eqnum2} for $i=1,...,N_r$, written for the internal points of the grid, combined with the equations for the extremal points \eqref{membrleft}, \eqref{membrright}, \eqref{a} and \eqref{b}, build the discretized system of $u$. The same equations with the proper coefficients can be found for $v$. 

Calling the vector solutions at time $t^n$ as
$$U^n = \begin{array}{cccccc}
	\left(u^n_1, \ldots, u^n_{N_l+1}, \h{u}^n_{1}, \ldots, \h{u}^n_{N_r+1}\right)^T,
\end{array} \qquad
V^n =\begin{array}{cccccc}
	\left(v^n_1, \ldots, v^n_{N_l+1}, \h{v}^n_{1}, \ldots, \h{v}^n_{N_r+1}\right)^T
\end{array}$$
and the reaction vectors as
$$F^{n} =\begin{array}{cccccc}
	\left(f^n_1,\, \ldots, f^n_{N_l+1}, \h{f}^n_{1}, \ldots, \h{f}^n_{N_r+1}\right)^T,
\end{array}
G^{n} =\begin{array}{cccccc}
	\left(g^n_1, \ldots, g^n_{N_l+1}, \h{g}^n_{1}, \ldots, \h{g}^n_{N_r+1}\right)^T,
\end{array}$$
we can write the discretized systems in a matrix form as $A U^{n+1}= B U^n + \Delta t F^n$ coupled with $C V^{n+1}= D V^n + \Delta t G^n$, where

\begin{center}
	A := \begin{tikzpicture}[baseline=(current bounding box.center)]
		\matrix (m) [matrix of math nodes,nodes in empty cells,right delimiter={)},left delimiter={(} ]{ 
			{\scriptstyle 1+\mu_l\Theta}  & {\scriptstyle -\mu_l \Theta}  &  {\scriptstyle 0}    &  &  &  &&&& {\scriptstyle 0} \\
			{\scriptstyle -\mu_l \Theta}    & {\scriptstyle 1+2\mu_l\Theta} &  & & & &&&&  \\
			{\scriptstyle 0} &&&& & & & & &    \\
			& & & {\scriptstyle 1+2\mu_l \Theta} & {\scriptstyle -\mu_l \Theta} & &&&&  \\
			& & & {\scriptstyle -\mu_l \Theta}  & {\scriptstyle 1+\mu_l \Theta+ \Theta \frac{\Delta t k_u}{\Delta x}} & {\scriptstyle -\Theta \frac{\Delta t k_u}{\Delta x} } &&&& \\
			& & &  & {\scriptstyle -\Theta \frac{\Delta t k_u}{\Delta x} } & {\scriptstyle 1+\mu_r \Theta+\Theta \frac{\Delta t k_u}{\Delta x}} & {\scriptstyle -\mu_r \Theta } &&&\\
			&&&&&{\scriptstyle -\mu_r \Theta } & {\scriptstyle 1+2\mu_r \Theta} & &&\\
			&&&&&&&&&{\scriptstyle 0}\\
			&&&&&&&& {\scriptstyle 1+2\mu_r \Theta} & {\scriptstyle -\mu_r \Theta}\\
			{\scriptstyle 0}&&&&&&&{\scriptstyle 0}& {\scriptstyle -\mu_r \Theta} & {\scriptstyle 1+\mu_r \Theta}\\
		} ;
		\draw[loosely dotted] (m-2-2)-- (m-4-4);
		\draw[loosely dotted] (m-1-2)-- (m-4-5);
		\draw[loosely dotted] (m-2-1)-- (m-5-4);
		\draw[loosely dotted] (m-7-6)-- (m-10-9);
		\draw[loosely dotted] (m-7-7)-- (m-9-9);
		\draw[loosely dotted] (m-6-7)-- (m-9-10);
		\draw[loosely dotted] (m-1-4)-- (m-1-9);
		\draw[loosely dotted] (m-2-10)-- (m-7-10);
		\draw[loosely dotted] (m-4-1)-- (m-9-1);
		\draw[loosely dotted] (m-10-2)-- (m-10-7);
	\end{tikzpicture}
\end{center}		
and, with the notation $\Theta':= 1-\Theta$, 
{\small
\begin{center}	
	B~:=~\begin{tikzpicture}[baseline=(current bounding box.center)]
		\matrix (m) [matrix of math nodes,nodes in empty cells,right delimiter={)},left delimiter={(} ]{ 
			{\scriptstyle 1-\mu_l\Theta'}  & {\scriptstyle \mu_l \Theta'}  &  {\scriptstyle 0}    &  &  &  &&&& {\scriptstyle 0} \\
			{\scriptstyle \mu_l \Theta'}    & {\scriptstyle 1-2\mu_l\Theta'} &  & & & &&&&  \\
			{\scriptstyle 0} &&&& & & & & &    \\
			& & & {\scriptstyle 1-2\mu_l \Theta'} & {\scriptstyle \mu_l \Theta'} & &&&&  \\
			& & & {\scriptstyle \mu_l \Theta'}  & {\scriptstyle 1-\mu_l \Theta'- \Theta' \frac{\Delta t k_u}{\Delta x}} & {\scriptstyle \Theta' \frac{\Delta t k_u}{\Delta x} } &&&& \\
			& & &  & {\scriptstyle \Theta' \frac{\Delta t k_u}{\Delta x} } & {\scriptstyle 1-\mu_r \Theta'-\Theta' \frac{\Delta t k_u}{\Delta x}} & {\scriptstyle \mu_r \Theta' } &&&\\
			&&&&&{\scriptstyle \mu_r \Theta' } & {\scriptstyle 1-2\mu_r \Theta'} & &&\\
			&&&&&&&&&{\scriptstyle 0}\\
			&&&&&&&& {\scriptstyle 1-2\mu_r \Theta'} & {\scriptstyle \mu_r \Theta'}\\
			{\scriptstyle 0}&&&&&&&{\scriptstyle 0}& {\scriptstyle \mu_r \Theta'} & {\scriptstyle 1-\mu_r \Theta'}\\
		} ;
		\draw[loosely dotted] (m-2-2)-- (m-4-4);
		\draw[loosely dotted] (m-1-2)-- (m-4-5);
		\draw[loosely dotted] (m-2-1)-- (m-5-4);
		\draw[loosely dotted] (m-7-6)-- (m-10-9);
		\draw[loosely dotted] (m-7-7)-- (m-9-9);
		\draw[loosely dotted] (m-6-7)-- (m-9-10);
		\draw[loosely dotted] (m-1-4)-- (m-1-9);
		\draw[loosely dotted] (m-2-10)-- (m-7-10);
		\draw[loosely dotted] (m-4-1)-- (m-9-1);
		\draw[loosely dotted] (m-10-2)-- (m-10-7);
	\end{tikzpicture}.
\end{center}}
\noindent Substituting $\mu_l, \mu_r, k_u$ with the notation $\sigma_l =\frac{D_{vl} \Delta t}{\Delta x ^2}$, $ \sigma_r =\frac{D_{vr} \Delta t}{\Delta x ^2}$ and $k_v$, we can write the matrix $C$ and $D$.
We report the core of the Matlab code here below.

{\footnotesize\lstinputlisting[caption = {Matlab code}]{numer2905.m}}
\bibliographystyle{abbrvnat}
\bibliography{references_chapter}
\end{document}